\def\dist{\mathop{\rm dist}\nolimits}
\newcommand{\RR}{\mathds{R}}
\newcommand{\R}{ \mathds{R}^{d}}
\newcommand{\Rd}{{\mathds{R}^d}}
\newcommand{\rd}{{\mathds{R}^d}}
\newcommand{\N}{\mathbb{N}}
\newcommand{\I}{\mathds 1}
\newcommand{\scalp}[2]{#1\cdot#2}
\newcommand{\indyk}[1]{{\mathds 1}_{#1}}
\newcommand{\prt}{\partial}
\newcommand{\sphere}{ \mathbb{S}}
\newcommand{\modgener}{{\mathcal A}^\#_t}
 \def\dist{\mathop{\rm
    dist}\nolimits} \def\diam{\mathop{\rm diam}\nolimits}
\newtheorem{lemma}{Lemma}[section]
\newtheorem{theorem}[lemma]{Theorem}
\newtheorem{proposition}[lemma]{Proposition}
\newtheorem{corollary}[lemma]{Corollary}
\newtheorem{definition}{Definition}[section]
\newcounter{conum} \setcounter{conum}{-1}
\newcommand{\vica}{\color{blue}}
\newcommand{\kb}{\color{green}}
\newcommand{\normal}{\color{black}}
\newcommand{\kbm}[1]{\marginpar{\scriptsize \textcolor{green}{#1}}}
\renewcommand{\star}{\circledast}
\newcommand{\real}{\mathds{R}}
\newcommand{\RRR}{\mathrm{Re}\,}
\numberwithin{equation}{section}
\begin{document}

\title{Heat kernel of anisotropic nonlocal operators}
\author[K. Bogdan]{Krzysztof Bogdan}
\address{ Faculty of Pure and Applied Mathematics,
Wroc\l aw University of Science and Technology,
Wyb. Wyspia\'nskiego 27, 50-370 Wroc\l aw, Poland}
\email{bogdan@pwr.edu.pl}
\author[P. Sztonyk]{Pawe{\l} Sztonyk}
\address{ Faculty of Pure and Applied Mathematics,
Wroc\l aw University of Science and Technology,
Wyb. Wyspia\'nskiego 27, 50-370 Wroc\l aw, Poland}
\email{Pawel.Sztonyk@pwr.edu.pl}
\author[V. Knopova]{Victoria Knopova  }
\address{V. M. Glushkov Institute of Cybernetics, 40, Acad. Gluskov Ave., 03187 Kiev, Ukraine /University of Wroclaw,  plac Uniwersytecki 1, 50-137 Wroc\l aw, Poland }
\email{vicknopova@gmail.com}
\thanks{The research of K. Bogdan and V. Knopova was partially supported by the NCN grant 2014/14/M/ST1/00600.}
\date{\today}
\maketitle

\begin{abstract}
We construct and estimate the fundamental solution of  highly ani\-so\-tro\-pic space-inhomogeneous integro-differential operators. We use the Levi method. We give applications to the Cauchy problem for such operators.
\end{abstract}

\footnotetext{2010 {\it MS Classification}:
Primary 47D03;
Secondary 60J35.\\
{\it Key words and phrases}:  anisotropic L\'evy kernel, heat kernel, parametrix.
}
\section{Introduction and  main results}\label{sec:iar}

Semigroups of operators are at the core of mathematical analysis. They describe evolutionary phenomena, resolve parabolic differential equations and have many connections to   spectral theory and integro-differential calculus.
In the paper we focus on Markovian semigroups--that is, probability kernels  satisfying the Chapman-Kolmogorov equation. They  model dissipation of mass and underwrite stochastic calculus of Markov processes.
We will construct  semigroups
determined by integral kernel $\nu(x,A)$, called the L\'evy kernel, and
interpreted as the intensity of the occurrence of dislocations of mass, or jumps, from position $x\in \Rd$ to the set $x+A\subset \Rd$.

The construction of the semigroup from the L\'evy kernel
is intrinsically difficult when
$\nu$ is rough, just like the construction of a flow from a non-Lipschitz direction field or a diffusion from a second order elliptic operator
with merely bounded or degenerate coefficients.
In this paper under appropriate assumptions on $\nu$ we obtain the semigroup  and estimate its integral kernel $p_t(x,y)$, called the heat kernel or the fundamental solution or the transition probability density,
and we prove regularity and uniqueness of the kernel.
Our results are analogues of the construction and estimates of the heat kernel for the second order elliptic operators with rough or degenerate coefficients.

A unique feature of our contribution is that we deal with
 highly anisotropic L\'evy kernels, meaning that $\nu(x,A)$ may
 vanish in certain jump directions.  In fact $\nu$
 may be
concentrated on a set of directions of Lebesgue measure zero.
We should note that despite recent rapid accumulation of estimates of heat kernels of nonlocal integro-differential L\'evy-type generators with kernels $\nu(x,A)$ so far there were virtually none on generators with highly anisotropic kernels.
 A notable exception  are the papers
 by Sztonyk et al. \cite{MR2320691, MR3357585, 2014arXiv1403.0912K, Sz2016}
 but they only concern translation invariant generators and convolution semigroups, for which the existence and many properties  follow by  Fourier methods.  We also mention the estimates of  anisotropic non-convolution heat kernels $p_t(x,y)$ 
given in \cite{MR2718253} and \cite{MR3089797},  however these are obtained under the assumption that the heat kernel   exists, without constructing it.

Specifically, we
consider jump kernels $\nu(x, dz)$ comparable to the L\'evy measure $\nu_0(dz)$
of a symmetric anisotropic $\alpha$-stable L\'evy process in $\Rd$. Here and below we always  assume that $0<\alpha<2$ and $d=1,2,\ldots$. For important technical reasons we also require 
H\"older continuity in $x$   of the Radon-Nikodym derivative
$\nu(x,dz)/
\nu_0(dz)$.
Recall that the L\'evy measure $\nu_0$ of the $\alpha$-stable L\'evy process
has the form of a product measure
in  polar coordinates: $\nu_0(drd\theta)=r^{-1-\alpha}dr\mu_0(d\theta)$. The anisotropy referred to above means that the spherical  marginal $\mu_0$ may even be singular with respect to the surface measure on the unit sphere. In fact
we assume that $\nu_0$ (and $\nu$)
have
Haussdorff-type regularity near  the unit sphere.
  The order $\gamma$ of the regularity is a fundamental factor in our development: we require $\alpha+\gamma>d$.

To construct the heat kernel $p$
from the L\'evy kernel
$\nu$ we use the parametrix method. It is a general approach, which starts from an implicit equation and {\emph a} first approximation $p^0$  for $p$. Iterating the equation  produces an explicit (parametrix) series.
The series formally solves the equation  but the actual proof require delicate analysis of convergence, which critically depends on the
apposite
choice of the first approximation $p^0$. The method  was  proposed by E.~Levi \cite{zbMATH02644101} to solve an elliptic Cauchy problem.  It was then extended by Dressel \cite{MR0003340} to parabolic systems  and  by Feller \cite{zbMATH03022319} to parabolic operators  perturbed by bounded non-local operator.
Further  developments were given in papers of Drin' \cite{MR0492880}, Eidelman and  Drin' \cite{MR616459}, Kochube\u\i{} \cite{MR972089} and Kolokoltsov \cite{MR1744782}. We also refer the reader to the monograph by Eidelman, Ivasyshin and Kochube\u\i{} \cite{MR2093219} and to the classical monograph of Friedman \cite{MR0181836} on the second-order parabolic differential operators.
The parametrix method  has a  variant called the perturbation or Duhamel  formula and series. The variant is appropriate for adding a ``lower order'' term to the generator of a given  semigroup and the role of the first approximation is played by the ``unperturbed'' semigroup. This is, however, not
the situation in the present paper, because $\nu(x,dz)-\nu_0(dz)$ is not of ``lower order''  in comparison with $\nu_0(dz)$.

For  recent developments in  the parametrix and perturbation methods  for  nonlocal operators  we refer the interested reader  to
Bogdan and Jakubowski \cite{MR2283957}, Knopova and Kulik \cite{2014arXiv1412.8732K}, \cite{2013arXiv1308.0310K}, Ganychenko, Knopova and Kulik \cite{MR3456146}, Kulik \cite{1511.00106}, Chen and Zhang \cite{MR3500272}, Kim, Song and Vondracek \cite{2016arXiv160602005K} and K{\"u}hn \cite{2017arXiv170200778K}.
A different Hilbert-space approach was developed in    Jacob \cite{MR1254818,MR1917230}, Hoh \cite{MR1659620} and
B\"ottcher \cite{MR2163294, MR2456894} and relies on the symbolic calculus, see also Tsutsimi \cite{MR0380172,MR0499861} and   Kumano-go \cite{MR1414739}.
We should note again that
the listed papers assume that $\nu(x,dz)/dz$
is locally comparable with a radial function.   This is what we call the isotropic setting. The anisotropic setting has different methods and very few results. Here we show how to handle space-dependent anisotropic generators using suitable majorization and recent precise estimates for stable convolution semigroups.

After verifying that
the parametrix series
representing $p_t(x,y)$ is
convergent, one is challenged to prove
that $p$ is indeed the fundamental solution, in particular that it is Markovian and the {\it generator} of the semigroup coincides with the integro-diferential {\it operator} defined by $\nu$ for sufficiently large class of functions. This is a complicated task.
 The method described by Friedman \cite{MR0181836}  consists in (1) proving that 
$p_t(x,y)$ gives solutions to the respective Cauchy problem for the operator and (2) using the maksimum principle for the operator.  This approach is extended to rather isotropic nonlocal operators by Kochube\u\i{}  \cite{MR972089} and further developed in the isotropic setting by Chen and Zhang \cite{MR3500272} and by Kim, Song and Vondracek \cite{2016arXiv160602005K}.
 In our work we indeed profited a lot by following the outline of Kochube\u\i{}     \cite{MR972089}.
Another  method, based on suitable approximations of the fundamental solutions was developed by Knopova and  Kulik \cite{2013arXiv1308.0310K,2014arXiv1412.8732K}. A more probabilistic approach, based on the notion of the martingale problem, is given by Kulik \cite{1511.00106}.  We should note that
the  construction of semigroups generated by nonlocal integro-differential operators is related to the existence and uniqueness of solutions of stochastic differential equations with jumps. For an overview  of the results  and references  in this direction, including the probabilistic interpretation of the parametrix method  we refer the reader to \cite{2014arXiv1412.8732K}.
We note in passing that in principle \cite{2013arXiv1308.0310K} allows to handle anisotropic kernels,  but
precise upper estimate of the resulting heat kernel
are non-trivial to obtain from the series representation given there.
The reader 
interested in probabilistic methods may consult further results and references in \cite{ MR3022725, 2014arXiv1412.8732K,2013arXiv1308.0310K, MR3544166, 1511.00106, MR3427977}.

Our
development is purely analytic.
We treat operators not manageable by the currently existing methods and give precise estimates for the heat kernels;
our upper bounds of $p_t(x,y)$ are essentially optimal.
We thus give a framework for further investigations of the inhomogeneous Cauchy problem
and of the regularity of solutions to nonlocal equations. The approach also gives guidelines for further developments of the parametrix method. 
In particular, extensions to anisotropic jump kernels $\nu(x,dz)$
with different radial decay profiles, cf. \cite{2014arXiv1403.0912K}, should be possible along the same lines. Such extensions call for estimates and regularity of suitable convolution semigroup majorants, and they are certainly non-trivial.

Here are the main actors of our presentation.
Let $d\in \{1,2,\ldots\}$ and let  $\nu(z,du)\ge 0$ be an integral kernel on $\Rd$ satisfying
 \begin{equation}\label{lev}
\sup_{z\in \Rd}\int_{\Rd\backslash \{0\}}  (1\wedge |u|^2)\, \nu(z,du)<\infty.
\end{equation}
Let $\nu$ be symmetric in the second argument, meaning that for all $z\in\Rd$  and $A\subset \Rd$,
\begin{equation}\label{eq:symm}
\nu(z,A)=\nu(z,-A).
\end{equation}
We note that this is a different symmetry than the one used in the theory of Dirichlet forms \cite{MR2778606}.
If $f:\Rd\to \R$ is a continuous functions vanishing at infinity, then we write $f\in C_0(\Rd)$ and for
$x,z\in \Rd$ and $\delta>0$, we let
\begin{eqnarray*}
L^{z,\delta} f(x):=\frac{1}{2}\int_{|u|>\delta} \big[ f(x+u)+f(x-u)-2f(x)\big] \nu(z,du),
\end{eqnarray*}
and
 \begin{equation}\label{lxd0}
L^z f(x):=\lim_{\delta\to 0} L^{z,\delta}f(x),
\end{equation}
provided a finite limit exits.
We note
$L^z$ and $L^{z,\delta}$ satisfy the
maximum principle: if $f(x_0)=\sup_{x\in \Rd} f(x)$, then   $L^{z,\delta}f(x_0)\le 0$ and $L^zf(x_0)\le 0$.
If, say, $\nu(x_0,du)$ has unbounded support, then we even have  $L^zf(x_0)<0$ provided $f(x_0)=\sup_{x\in \Rd} f(x)>0$, because
$f(x_0+u)+f(x_0-u)$
is close to zero  on a set of positive  measure $\nu(x_0,du)$.
We let
$$
L^{\delta}f(x):=L^{x,\delta}f(x),\qquad Lf(x):= L^xf(x),
$$
and define the domain of $L$:
\begin{equation}\label{dtil}
D(L)= \{ f\in C_0(\rd):\text{ finite $L f(x)$ exists
for all $x\in \rd$}\}.
\end{equation}
We often write $L_x p_t(x,y)$,  etc., meaning that
$L$ acts on the first spatial variable  $x$ of $p_t(x,y)$. 
By the Taylor expansion and \eqref{lev}, $D(L)$ contains $C_0^2(\Rd)$. Here, as usual, $f\in C_0^2(\rd)$ means that
$f$ and all its derivatives of order up to $2$
are continuous and converge to zero at infinity.
We have
\begin{eqnarray}\label{eq:LC2}
&&Lf(x)=\int_{\Rd} \frac12\big[ f(x+u)+f(x-u)-2f(x)\big] \nu(x,du)\\
&&=
\int_{\Rd}\big[ f(x+u)-f(x)-u\cdot  \nabla f(x)\indyk{|u|\leq 1}\big]\nu(x,du),\quad f\in C_0^2(\Rd).\label{eq:wm}
\end{eqnarray}

We now fully specify the properties of
$\nu$ used in this paper. Let $\alpha\in (0,2)$.
Let $\mu_0$ be a symmetric finite measure $\mu_0$ concentrated on the unit sphere $\sphere:=\{x\in \rd: |x|=1\}$. 
We assume that $\mu_0$ is  non-degenerate, i.e.,  not concentrated on a proper linear subspace of $\R$. In particular $\mu_0(\Rd)>0$.
We define
\begin{equation}\label{e:mn}
  \nu_0(A) := \int_{\sphere} \int_0^\infty \indyk{A}(r\theta) r^{-1-\alpha} \,dr \mu_0(d\theta)\,,\quad
  A\subset \Rd\,,
\end{equation}
where $\indyk{A}$ is the indicator function of $A$.
We note that $\nu_0$ is a L\'evy measure, that is,
\begin{equation}\label{eq:V0Lm}
  \int_{\R\backslash \{0\}}(1 \wedge |y|^2)\, \nu_0(dy)<\infty,
\end{equation}
and $\nu_0$ is infinite at the origin, cf.  \eqref{e:mn}.

\begin{definition}
We say that $\nu_0$ is a $\gamma$-measure at $\sphere$ if $\gamma\ge 0$ and
\begin{equation}\label{nu_gamma} 
  \nu_0(B(x,r)) \leq m_0 r^{\gamma}\,,\quad |x|=1\,,\; 0<r<1/2\,.
\end{equation}
\end{definition}
  This is a Haussdorff-type condition on $\nu_0$ outside of the origin.
Since $\nu_0(drd\theta)=r^{-1-\alpha}dr\mu_0(d\theta)$, $\nu_0$ is at least a $1$-measure
and at most a $d$-measure at $\sphere$.
For the rest of the paper
we  fix $\gamma$ and make the following assumptions.
\begin{itemize}
\item[\textbf{A1}.]
$\nu_0$ is given by \eqref{e:mn} with non-degenerate finite symmetric spherical measure $\mu_0$, which is a $\gamma$-measure at $\sphere$ and $\alpha+\gamma>d$.
\item[\textbf{A2}.]\label{A1c}
There exist constants  $M_0>0$, $\eta\in (0,1]$ such that
\begin{equation}\label{M0}
	M_0^{-1}\nu_0(A) \leq \nu(z,A) \leq M_0 \nu_0(A),\quad z\in\Rd,\, A\subset \Rd,
\end{equation}
and
\begin{equation}\label{M00}
  |\nu(z_1,A)-\nu(z_2,A)|\leq M_0 \left( |z_1-z_2|^{\eta}\wedge 1\right) \nu_0(A), \quad z_1,z_2\in \rd, \quad A\subset \rd.
\end{equation}
\end{itemize}
By the Radon-Nikodym theorem, \textbf{A2} is equivalent to having
$\nu(z,du)=h(z,u)\nu_0(du)$, where
$M_0^{-1}\le h(z,u)\leq M_0$ and  $h(z,u)$ is $\eta$-H\"older continuous with respect to $z$.
Note
that \eqref{M0} and \eqref{eq:V0Lm} imply \eqref{lev}.

We now  indicate how to define the heat kernel $p_t(x,t)$ corresponding to $\nu$ (details are given in Section~\ref{para}).
Let $p_t^z(y-x)$ be the the transition probability density corresponding to the L\'evy measure $
\nu(z,\cdot)$, with $z\in \Rd$ fixed, see \eqref{pty}.
For $t>0$, $x,y\in \Rd$ we define the ``zero-order''  approximation of $p_t(x,y)$:
\begin{equation}\label{pt0}
p_t^0(x,y)=
p_t^y(y-x), \quad t>0, \quad x,y\in \rd.
\end{equation}
We note that  it is the ``target point'' $y$ that determines the L\'evy measure $\nu(y,\cdot)$ used to define $p^0_t(x,y)$. This is important for regularity of $x\mapsto p^0_t(x,y)$. We let
\begin{equation}\label{Phi}
\Phi_t(x,y)=\Big(L_x-\prt_t\Big)p_t^0(x,y),
\end{equation}
and \begin{equation}\label{Psi}
\Psi_t(x,y)= \sum_{k=1}^\infty \Phi^{\boxtimes k}_t(x,y),
\end{equation}
We use here the $k$-fold convolution \eqref{conv3}.
Then we let
\begin{equation}\label{r}
p_t(x,y)=p^0_t(x,y)+\big(p^0\boxtimes \Psi\big)_t(x,y).
\end{equation}
The following three theorems reflect the main steps in our development.
\begin{theorem}\label{t-exist} We have
\begin{equation}\label{eq:pofs}
(\partial_t-L_x)p_t(x,y)=0,\qquad t>0,\quad  x,y\in \Rd,
\end{equation}
and for all $f\in C_0(\Rd)$, uniformly in $x\in \rd $ we have
\begin{equation}\label{f-sol}
   \lim_{t\to 0} \int_\Rd f(y) p_t(x,y)\, dy =f(x).
\end{equation}
\end{theorem}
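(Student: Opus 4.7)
The plan is to derive \eqref{eq:pofs} from the series definition \eqref{r} by a formal cancellation whose justification rests on the pointwise bounds on $\Phi$ and $\Psi$ established in the preceding sections, and then to obtain \eqref{f-sol} by separating the leading $p^0_t$ from the remainder and estimating each.

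For \eqref{eq:pofs}, note first that since $p_t^y$ is the convolution semigroup with L\'evy measure $\nu(y,\cdot)$, differentiating $p^0_t(x,y)=p^y_t(y-x)$ in $t$ and using symmetry of $\nu(y,\cdot)$ gives $\prt_t p^0_t(x,y)= L^y_x p^0_t(x,y)$, where $L^y_x$ denotes $L^y$ acting on the variable $x$. Hence
\[
\Phi_t(x,y)=(L_x-\prt_t)p^0_t(x,y)=(L_x-L^y_x)p^0_t(x,y);
\]
the gain $|x-y|^{\eta}\wedge 1$ from \eqref{M00}, combined with the estimates on $p^0_t$, is what makes the Levi series in \eqref{Psi} converge absolutely with a workable pointwise majorant. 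Writing $q_t(x,y):=(p^0\boxtimes\Psi)_t(x,y)$ and using $\lim_{s\to 0^+}p^0_s(x,\cdot)=\delta_x$, differentiation of the space-time convolution yields
\[
(\prt_t-L_x)q_t(x,y)=\Psi_t(x,y)+\big((\prt_t-L_x)p^0\boxtimes\Psi\big)_t(x,y)=\Psi_t(x,y)-(\Phi\boxtimes\Psi)_t(x,y),
\]
which by the fixed-point identity $\Psi=\Phi+\Phi\boxtimes\Psi$ (a direct consequence of \eqref{Psi}) equals $\Phi_t(x,y)$. Adding this to $(\prt_t-L_x)p^0_t(x,y)=-\Phi_t(x,y)$ gives \eqref{eq:pofs}.

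For \eqref{f-sol}, split
\[
\int_{\Rd} f(y)p_t(x,y)\,dy=\int_{\Rd} f(y)p^0_t(x,y)\,dy+\int_{\Rd} f(y)q_t(x,y)\,dy.
\]
Substituting $u=y-x$ and using $\int_{\Rd} p^z_t(u)\,du=1$ for every $z$, the first integral equals $\int_{\Rd}f(x+u)p^{x+u}_t(u)\,du$, which converges to $f(x)$ uniformly in $x$ by uniform continuity of $f\in C_0(\Rd)$ together with the short-time concentration of $p^z_t$ at $u=0$, uniform in $z$ thanks to \eqref{M0} (which reduces matters to the single convolution semigroup generated by $\nu_0$). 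For the second integral, the pointwise upper estimates on $\Psi$ from the preceding sections integrate in $y$ and in $s\in(0,t)$ to give $\int_{\Rd}q_t(x,y)\,dy=O(t^{\eta/\alpha})$ uniformly in $x$, so this contribution vanishes as $t\to 0$.

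The main obstacle is rigorously justifying the manipulations in the proof of \eqref{eq:pofs}, namely interchanging the nonlocal operator $L_x$ with the space-time convolution $\boxtimes$ and with the infinite sum defining $\Psi$. Since $L_x$ is an integral against $\nu(x,du)$ whose support may have Hausdorff dimension strictly less than $d$, one cannot fall back on crude Fubini-type reshufflings of spatial integrals on $\Rd$; instead, the anisotropic pointwise estimates on $L_x p^0_{t-s}(\cdot,z)$ and on $\Psi_s(z,y)$ must be combined to produce an integrable majorant in $(s,z)\in(0,t)\times\Rd$. The condition $\alpha+\gamma>d$ and the sharpness of the majorants built for $\Psi$ in the paper are exactly what allow such a majorant to exist and hence render the formal cancellation above rigorous.
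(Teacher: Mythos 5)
Your proposal is correct and follows essentially the same route as the paper: the identity $(\partial_t-L_x)p_t=-\Phi_t+\Psi_t-(\Phi\boxtimes\Psi)_t=0$ via the fixed-point relation $\Psi=\Phi+\Phi\boxtimes\Psi$, with the boundary term $\Psi_t$ arising from differentiating the time convolution, and the initial condition obtained by splitting off $p^0$ and showing $\int(p^0\boxtimes\Psi)_t(x,y)\,dy=O(t^{\theta/\alpha})$. You also correctly identify the genuine technical burden (justifying $L_x$ under the $\boxtimes$-integral and the series), which the paper discharges in Lemma~\ref{lem-ltil} and Lemma~\ref{der2} using exactly the majorants you invoke.
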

To describe the growth and regularity of
$p_t(x,y)$,
for   $\beta>0$
we let
\begin{equation}\label{g10}
 G^{(\beta)}(x)= (|x|\vee 1)^{-\beta}, \quad G^{(\beta)}_t(x)= \frac{1}{t^{d/\alpha}}G^{(\beta)}\left( \frac{x}{t^{1/\alpha}}\right), \quad t>0,\ x\in \rd.
\end{equation}
Of course, if $\beta>d$, then
\begin{equation}\label{g10b}
 \int_\rd G^{(\beta)}_t(x)dx=\int_\rd G^{(\beta)}(x)dx<\infty, \quad t>0.
\end{equation}

\begin{theorem}\label{t-prop}
There are  constants $C, c>0$  such that
\begin{equation}\label{up100}
 \big|\partial_t^k  p_t(x,y)\big|\leq C t^{-k} e^{ct} G_t^{(\alpha+\gamma)}(y-x), \quad k=0,1,\quad t>0, x,y\in \Rd,
 \end{equation}
 and for all $t>0$, $x_1,x_2,y\in \rd$,
\begin{equation}\label{ptx-hol}
\big| p_t(x_1,y)-p_t(x_2,y)\big| \leq  C \Big(\frac{|x_1-x_2|}{t^{1/\alpha}}\Big)^{\theta}e^{ct} \Big(G_t^{(\alpha+\gamma)}(y-x_1)+
G_t^{(\alpha+\gamma)}(y-x_2)\Big),
\end{equation}
for some  $\theta\in (0, \eta\wedge \alpha\wedge (\alpha+\gamma-d))$. Furthermore, $p_t(x,y)$ is continuous in $y$.
\end{theorem}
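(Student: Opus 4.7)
My plan is to run the classical Levi parametrix machinery on the representation \eqref{r}, using as building blocks precise estimates for the frozen-at-$y$ density $p^0_t(x,y)=p^y_t(y-x)$. From A1--A2 and the translation-invariant estimates for anisotropic $\alpha$-stable kernels in the Sztonyk et al.\ line of work, one has, uniformly in $y\in\Rd$,
$$
|\partial_t^k p^0_t(x,y)|\le C\,t^{-k}G^{(\alpha+\gamma)}_t(y-x),\qquad k=0,1,
$$
together with the H\"older bound
$$
|p^0_t(x_1,y)-p^0_t(x_2,y)|\le C\Big(\tfrac{|x_1-x_2|}{t^{1/\alpha}}\Big)^{\theta}\bigl[G^{(\alpha+\gamma)}_t(y-x_1)+G^{(\alpha+\gamma)}_t(y-x_2)\bigr]
$$
for every $\theta<\alpha\wedge(\alpha+\gamma-d)$; the two-sided comparability \eqref{M0} is what keeps the constants independent of $y$.

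For the error $\Phi_t(x,y)=(L_x-\partial_t)p^0_t(x,y)$ I would use $\partial_t p^y_t=L^y p^y_t$ to rewrite it as
$$
\Phi_t(x,y)=\tfrac12\int\bigl[p^y_t(y-x+u)+p^y_t(y-x-u)-2p^y_t(y-x)\bigr]\bigl[\nu(x,du)-\nu(y,du)\bigr].
$$
The H\"older assumption \eqref{M00} extracts a factor $|x-y|^\eta\wedge 1$, while the second-difference bracket is tamed by splitting \eqref{eq:LC2} at $|u|=t^{1/\alpha}$ and applying the Haussdorff condition \eqref{nu_gamma} to integrate against $\nu_0$. This produces
$$
|\Phi_t(x,y)|\le C\,t^{-1+\eta_0/\alpha}G^{(\alpha+\gamma)}_t(y-x)
$$
for some $\eta_0\in(0,\eta\wedge\alpha\wedge(\alpha+\gamma-d))$, the factor $t^{\eta_0/\alpha}$ being the regularity gain that makes \eqref{Psi} converge. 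Next I would prove a convolution inequality
$$
\bigl(G^{(\alpha+\gamma)}_s\boxtimes G^{(\alpha+\gamma)}_{t-s}\bigr)(x,y)\le C\,G^{(\alpha+\gamma)}_t(y-x),
$$
valid because $\alpha+\gamma>d$, and combine it with the beta-integral identity to obtain, by induction,
$$
|\Phi_t^{\boxtimes k}(x,y)|\le \frac{C^{k}\Gamma(\eta_0/\alpha)^k}{\Gamma(k\eta_0/\alpha)}\,t^{-1+k\eta_0/\alpha}\,G^{(\alpha+\gamma)}_t(y-x).
$$
Summing in $k$ gives $|\Psi_t(x,y)|\le Ce^{ct}t^{-1+\eta_0/\alpha}G^{(\alpha+\gamma)}_t(y-x)$, and one more convolution in \eqref{r} yields the $k=0$ case of \eqref{up100}. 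For $k=1$ I would invoke \eqref{eq:pofs} from Theorem~\ref{t-exist}, write $\partial_t p_t=L_x p_t$, and estimate $L_x p_t(x,y)$ by splitting the integrand of \eqref{eq:LC2} at $|u|=t^{1/\alpha}$: the small-$u$ part uses the H\"older-in-$x$ estimate proved next, the large-$u$ part uses \eqref{nu_gamma} together with the $k=0$ bound just established.

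The H\"older bound \eqref{ptx-hol} is obtained by rerunning the same induction on the increments $\Phi_t(x_1,y)-\Phi_t(x_2,y)$, using the H\"older bound for $p^0_t$ in place of its pointwise bound and splitting into the regimes $|x_1-x_2|\le t^{1/\alpha}$ and $|x_1-x_2|>t^{1/\alpha}$; the convolution inequality propagates the sum $G^{(\alpha+\gamma)}_t(y-x_1)+G^{(\alpha+\gamma)}_t(y-x_2)$ through both the inner $\Psi$-series and the outer convolution of \eqref{r}. Continuity of $y\mapsto p_t(x,y)$ follows from continuity of $y\mapsto p^0_t(x,y)$, itself a consequence of the H\"older dependence \eqref{M00} of $\nu(y,\cdot)$ on $y$ together with the convolution-semigroup estimates, plus dominated convergence in \eqref{Psi} and \eqref{r}. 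The main obstacle is the bookkeeping of exponents: each convolution step costs a little integrability near $y-x$ (since $G^{(\alpha+\gamma)}_t$ is only borderline integrable when $\alpha+\gamma$ is close to $d$), and every H\"older step costs a little regularity of $p^y_t$ and a little of \eqref{M00}. To keep both the parametrix series and its H\"older increment summable we must choose $\theta$ strictly below each of the thresholds $\eta$, $\alpha$, and $\alpha+\gamma-d$, which is precisely the range recorded in \eqref{ptx-hol}.
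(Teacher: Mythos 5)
Your outline follows the general Levi-method template of the paper, but its load-bearing step is false. The claim $|\Phi_t(x,y)|\le C\,t^{-1+\eta_0/\alpha}G^{(\alpha+\gamma)}_t(y-x)$ does not follow from what you derive: the H\"older condition \eqref{M00} together with Lemma~\ref{l:StGenerEst} yields exactly \eqref{Phi10}, i.e. $|\Phi_t(x,y)|\le C\,t^{-1}(1\wedge|y-x|^{\eta})\,G^{(\alpha+\gamma)}_t(y-x)$, and for $|y-x|\ge 1$ the H\"older factor equals $1$ and cannot be traded for $t^{\eta_0/\alpha}$ as $t\to 0$; your bound would force $1\le Ct^{\eta_0/\alpha}$. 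Nor can one iterate the correct bound as it stands, because the kernel $t^{-1}(1\wedge|x|^{\eta})G^{(\alpha+\gamma)}_t(x)$ fails the sub-convolution property (take $x=0$ in \eqref{H0}). This is precisely the obstruction the paper is organized around: it introduces the two-parameter majorants $H^{(\kappa,\zeta)}_t$ of \eqref{H10}, proves their sub-convolution property in Proposition~\ref{sub-conv} under $\alpha+\kappa-d>\zeta$, runs the induction of Lemma~\ref{Phi-l20} entirely in terms of $H^{(\gamma,\theta)}_t$ with the gain $t^{-1+k\theta/\alpha}$, and only converts back to $G^{(\alpha+\gamma)}_t$ at the end via \eqref{H20}. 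Without a substitute for this device your series estimate, and hence the $k=0$ case of \eqref{up100} and all of \eqref{ptx-hol}, does not close.

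The $k=1$ argument has a second gap. Writing $\partial_t p_t=L_xp_t$ and estimating the small-jump part of \eqref{eq:LC2} by the H\"older bound \eqref{ptx-hol} cannot work, since $\theta<\alpha$ makes $\int_{|u|\le t^{1/\alpha}}|u|^{\theta}\,\nu_0(du)$ divergent; one needs second-order cancellation, which is available for $p^0_t$ but not directly for $p_t$. Indeed, the paper's pointwise bound on $L^\delta_xp_t$ (Corollary~\ref{rem-6-1}, obtained through the series representation in Lemma~\ref{lem-ltil}) carries an extra term $t^{\theta/\alpha}K_t(x,y)$ that is \emph{not} dominated by $G^{(\alpha+\gamma)}_t(y-x)$, so even a posteriori this route is not available before \eqref{up100} is proved. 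The paper instead differentiates \eqref{r} term by term: Lemma~\ref{l:dertxpest} for $\partial_tp^0_t$, an induction on $\partial_t\Phi^{\boxtimes k}_t$ with the time convolution split at $s=t/2$ to avoid the non-integrable singularity $t^{-2+\theta/\alpha}$ (Lemma~\ref{phi-der}), and the same splitting for $\partial_t(p^0\boxtimes\Psi)_t$. Your plan for \eqref{ptx-hol} and for continuity in $y$ is workable in outline (the paper in fact needs only the H\"older bound \eqref{lip-p} for $p^0_t$ there, not a re-induction on increments of $\Phi^{\boxtimes k}$), but both rest on the same sub-convolution machinery and so inherit the first gap.
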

The correspondence of $p$ and $L$ is detailed as follows.
 \begin{theorem}\label{t-uni} For $f\in C_0(\Rd)$, $t>0$ and $x\in \rd$ we let
      \begin{equation}\label{t-semi}
P_t f(x)=\int_\rd  p_t(x,y)f(y)dy.
\end{equation}	
Then $(P_t)$ is a strongly continuous Markovian semigroup on $C_0(\Rd)$ and 
the function
 $u(t,x)=P_t f(x)$ defines the unique solution to
the Cauchy problem
\begin{equation}\label{Ca1}
\left\{\begin{split}
\big( \partial_t -L_x\big) u(t,x)&= 0,\quad t>0,\ x\in \Rd,\\
u(0,x)&= f(x),\quad x\in \rd,
\end{split}
\right.
\end{equation}
such that $e^{-\lambda t}u(t,x)
\in C_0([0,\infty)\times \Rd)$ for some $\lambda\in \real$.
\end{theorem}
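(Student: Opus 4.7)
The plan is to combine the bounds of Theorem~\ref{t-prop} with the identities of Theorem~\ref{t-exist}, and to use the maximum principle for $L$ to obtain uniqueness, from which the semigroup and Markovian properties follow. For the mapping properties, \eqref{up100} with $k=0$ and $\alpha+\gamma>d$ gives $\sup_x\int p_t(x,y)\,dy<\infty$, so $\|P_tf\|_\infty\le Ce^{ct}\|f\|_\infty$; continuity of $x\mapsto P_tf(x)$ comes from \eqref{ptx-hol}, and $P_tf(x)\to 0$ as $|x|\to\infty$ follows by splitting $\int p_t(x,y)f(y)\,dy$ over $\{|y-x|\le |x|/2\}$ and its complement and using $f\in C_0(\Rd)$ on the first region together with the tail decay of $G^{(\alpha+\gamma)}_t$ on the second. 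Hence $P_t:C_0(\Rd)\to C_0(\Rd)$, and the strong continuity at $t=0^+$ is precisely \eqref{f-sol}. To check that $u(t,x)=P_tf(x)$ solves \eqref{Ca1}, I would differentiate in $t$ under the integral via \eqref{up100} with $k=1$, exchange $L_x$ with the integral using a pointwise bound on $L_xp_t(x,\cdot)$ derivable from \eqref{eq:pofs} and \eqref{up100}, and then invoke \eqref{f-sol} for the initial value.

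The heart of the matter is uniqueness. Given any solution $u$ of \eqref{Ca1} with $f\equiv 0$ satisfying $e^{-\lambda t}u\in C_0([0,\infty)\times\Rd)$, fix $\mu>\lambda\vee 0$ and let $v(t,x)=e^{-\mu t}u(t,x)$. Then $v\in C_0([0,\infty)\times\Rd)$ and $\partial_tv-L_xv=-\mu v$ with $v(0,\cdot)\equiv 0$. If $M:=\sup v>0$, then, because $v$ vanishes at $t=0$ and at infinity of $[0,\infty)\times\Rd$, the value $M$ is attained at some $(t_0,x_0)$ with $t_0>0$; at that point $\partial_tv(t_0,x_0)\ge 0$ while the maximum principle for $L$ gives $L_xv(t_0,x_0)\le 0$, so $\partial_tv-L_xv\ge 0$, contradicting $-\mu v(t_0,x_0)<0$. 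The same argument applied to $-v$ yields $v\equiv 0$. The delicate point, and the main potential obstacle, is justifying that $v(t_0,\cdot)\in D(L)$ so that the pointwise maximum principle applies; this should be read into the notion of ``solution of \eqref{Ca1}'' used here, and if only weaker regularity is available one passes to the limit in a mollification $v\ast\phi_\varepsilon$ using the $C_0^2$-continuity of $L$.

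Uniqueness then closes the argument. For fixed $s>0$, both $t\mapsto P_{t+s}f$ and $t\mapsto P_t(P_sf)$ solve \eqref{Ca1} with initial data $P_sf\in C_0(\Rd)$ from the first paragraph, and \eqref{up100} places them in the uniqueness class for $\lambda\ge c$, so $P_{t+s}=P_tP_s$. For the Markovian property, take $0\le f\le 1$ in $C_0(\Rd)$ and set $u=P_tf$; if $\sup u>\sup f$, then $u\in C_0$ forces the supremum to be attained at some $(t_0,x_0)$ with $t_0>0$ and $u(t_0,x_0)>0$, and the strict maximum principle for $L$ (valid because $\nu(x_0,\cdot)$ has unbounded support by \textbf{A2}, as noted in the excerpt) combined with $\partial_tu=L_xu$ gives $L_xu(t_0,x_0)<0\le\partial_tu(t_0,x_0)$, a contradiction. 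An analogous lower bound follows by applying the argument to $-u$, whence $0\le\inf f\le P_tf\le\sup f\le 1$.
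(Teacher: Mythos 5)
Your outline reproduces much of the paper's strategy (the solvability via Theorem~\ref{t-exist} and Theorem~\ref{t-prop}, the maximum-principle uniqueness as in Lemma~\ref{mm} and Corollary~\ref{cu}, the derivation of the semigroup law from uniqueness as in Lemma~\ref{ChK}, and positivity), but it has one genuine gap: you never prove that the semigroup is \emph{Markovian} in the sense the theorem asserts, i.e.\ conservative, $\int_\Rd p_t(x,y)\,dy=1$. Your third paragraph only yields $0\le P_tf\le \sup f$, which is the \emph{sub}-Markov property; it is entirely consistent with a loss of mass ($\int_\Rd p_t(x,y)\,dy<1$). In the paper this is the hardest part of the proof of Theorem~\ref{t-uni}: one first shows that the generator $\mathcal L$ of $(P_t)$ coincides with $L$ on $C_0^2(\rd)$ (a delicate argument requiring the absolute convergence statements of Corollary~\ref{rem-6-1}, in particular \eqref{Ldel1}), and then applies the identity $P_tf_n-f_n=\int_0^tP_s\mathcal Lf_n\,ds$ to the dilated cutoffs $f_n(x)=f(x/n)$, using $|Lf_n(x)|\le c\,n^{-\alpha}$ (which follows from $\nu(x,B(0,n/2)^c)\le M_0\nu_0(B(0,n/2)^c)$) to conclude $\int_\Rd p_t(x,y)\,dy=1$ in the limit. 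Without some substitute for this step the statement is not proved.

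A secondary, more technical point: in your first paragraph you propose to exchange $L_x$ with $\int\cdots f(y)\,dy$ using ``a pointwise bound on $L_xp_t(x,\cdot)$ derivable from \eqref{eq:pofs} and \eqref{up100}.'' That identifies the limit $L_xp_t(x,y)=\partial_tp_t(x,y)$ and bounds it, but to pass from $L^\delta P_tf(x)=\int L^\delta_xp_t(x,y)f(y)\,dy$ to the limit $\delta\to0$ you need a dominating function for $L^\delta_xp_t(x,y)$ that is \emph{uniform in $\delta$} and integrable in $y$; this is exactly what the paper's estimate \eqref{L3-est} supplies and what your sketch is missing. The uniqueness paragraph itself is sound and essentially identical to Lemma~\ref{mm} (your non-strict version with $-\mu v(t_0,x_0)<0$ works, and the membership $v(t_0,\cdot)\in D(L)$ is indeed part of what ``solution'' means here), as is the deduction of the semigroup law and positivity.
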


The structure  of the paper is as follows.
In Section~\ref{sec:prel} we give  notation, definitions and preliminary results, mainly
Lemma~\ref{l:StGenerEst}
and the estimates and regularity of convolution semigroups
whose L\'evy measure
is comparable with $\nu_0$, mainly Lemma~\ref{hol10}.
In Section~\ref{para}
we show that the series defining $p_t(x,y)$ converge
and
we prove Theorem~\ref{t-exist}.
In Section~\ref{time} we estimate the time derivative of $p_t(x,y)$ and we prove Theorem~\ref{t-prop}.
In Section~\ref{Uni}  we prove
Theorem~\ref{t-uni}. We also show that the generator $\mathcal L$ of $(P_t)$ coincides with the operator $L$ on $C^2_0(\rd)$ and that the kernel $p_t(x,y)$ with the above properties is unique.

Before we go to the proofs we discuss typical applications, the sharpness and further questions related to our results.
There exist many measures $\nu_0$ and $\nu$ satisfying the conditions \textbf{A1} and \textbf{A2}.
In fact, $\nu_0$ is a $\gamma$-measure at $\sphere$ if and only if $\mu_0$ is a $(\gamma-1)$-measure at $\sphere$.
We also see that $\nu_0$ is a $d$-measure if and only if it is absolutely continuous with
respect to the Lebesgue measure and has a density function
locally bounded
on $\R\setminus\{0\}$; in this case the condition \textbf{A1} holds trivially. For detailed discussion of this case we refer the reader to \cite{MR1085177} and \cite{MR1220664}. One of possible ways of constructing more general $\nu_0$ is the following.
For every $\gamma\in [1, d]$ there exists a set $F\subset \mathbb{S}$
with positive finite Hausdorff measure of order $(\gamma-1)$ \cite{MR0048540}
and a set $E\subset F$ such that the Hausdorff measure restricted to $E$, say $\mu_0$,  is a nonzero ($\gamma-1$)-measure \cite[Prop. 4.11]{MR2118797}.  Then $\nu_0$ defined by \eqref{e:mn}
is a $\gamma$-measure at $\mathbb{S}$, and
\textbf{A1} holds provided $
\alpha>d-\gamma$.
For instance, if $d=2$ and $E$ is  the usual ternary Cantor set on $\mathbb{S}$
and  $\gamma-1=\log2/\log 3$, then  \textbf{A1} holds provided $
\alpha>1-\log2/\log 3\approx0.3791$.
In the simplest case, if, e.g., the (symmetric non-degenerate) measure $\mu_0$ is a finite sum of Dirac measures, then $\gamma=1$. If we assume $\alpha+1>d=1$ or $2$, then \eqref{up100} with $k=0$  agrees with \cite[(17)]{MR2320691}, which is known to be essentially optimal for $\alpha$-stable convolution semigroups \cite[Theorem~1.1]{MR2286060}. By the same reference, our upper bounds are essentially optimal for general $\gamma$.
Our results and \cite{MR2320691, MR3482695, MR3318251} suggest further questions about more precise estimates of the semigroup in large time, regularity of the resolvent, Harnack inequality for harmonic functions, estimates of the Green function and Poisson kernel, etc.
We also hope that our emphasis on using
the so-called sub-convolution property and auxiliary majorants based on kernels of convolution semigroups, of which many are known at present \cite{MR3165234}, will bring about further progress and more synthetic approach to the Levi's method.

\section{Auxiliary  convolution semigroups }\label{sec:prel}

\subsection{Notation and preliminaries}
Let $\N=\{1,2,\ldots\}$, $\N_0=\{0,1,2,\dots\}$ and $\N_0^d = (\N_0)^d$. For (multiindex) $\beta=(\beta_1,\ldots,\beta_d)\in \N_0^d$ we denote $|\beta|=\beta_1+\ldots+\beta_d$. For $x=(x_1,\ldots,x_d),\,y=(y_1,\ldots,y_d)\in\R$ and $r>0$ we let $x\cdot y=\sum_{i=1}^d x_iy_i$ and $|x|=\sqrt{x\cdot x}$.
We denote by  $B(x,r)\subset \rd$ the ball of radius $r$ centered at $x\in \rd$, so $\sphere=\partial B(0,1)$ is the unit sphere.
All the sets, functions and measures considered in this paper are assumed Borel.
For measure $\lambda$ we let $|\lambda|$ denote the total variation of  $\lambda$.
{\it Constants} mean positive real numbers and we denote them by
$c$, $C$, $c_i$, etc.
For nonnegative functions $f,g$ we write $f\approx g$ to indicate that for some constant $c$,
$c^{-1}f \leq g \leq c f$. We write $c=c(p,q,\dots,r)$ if the constant $c$ can be obtained as a function   of $p,q,\dots, r$ only.

The convolution of measures is, as usual, $\lambda_1*\lambda_2(A)= \int_\rd \lambda_1(A-z)\lambda_2(dz)$, where $A\subset \Rd$.
We also consider the following compositions of functions on space and space-time, respectively:
\begin{align}
(\phi_1\star \phi_2)(x,y)&:= \int_\Rd \phi_1(x,z)\phi_2(z,y)dz,\label{conv2}\\
(f_1\boxtimes f_2) (t,x,y)&:= \int_0^t \int_\Rd f_1(t-\tau,x,z)f_2(\tau, z,y)dzd\tau.
\label{conv3}
\end{align}
Here $x,y\in \Rd$, $t\in [0,\infty)$ and the integrands are assumed to be nonnegative or absolutely integrable.

We keep considering the L\'evy measure $\nu_0$ and the L\'evy  kernel $\nu$ introduced in Section~\ref{sec:iar}.
For clarity, it is always assumed that $A\mapsto \nu(x,A)$ is a Borel measure on $\Rd$ for every $x\in \Rd$ and
$x\mapsto \nu(x,A)$  is Borel measurable for every Borel $A\subset \Rd$.
Since we are about to construct a Feller semigroup corresponding to $\nu$, the assumption of Borel measurability is natural, cf. \cite[Proposition 2.27(f)]{MR3156646}.

By construction, $\nu_0$
is symmetric, non-degenerate and homogeneous of order $-\alpha$:
$$\nu_0(rA)=r^{-\alpha}\nu_0(A),\quad 0<r<\infty, \ A\subset
\Rd.$$
The correspondence of $\nu_0$ and $\mu_0$ is a bijection \cite[Remark~14.4]{MR1739520}.
We call $\mu_0$ the {\it spherical measure} of $\nu_0$.
Since $\mu_0$ is non-degenerate,
\begin{equation}\label{nond}
	\inf_{\xi\in\sphere} \int_{\sphere} |\scalp{\xi}{\theta}|^\alpha\mu_0(d\theta) > 0.
\end{equation}
The respective characteristic (L\'evy-Khintchine) exponent $q_{\nu_0}$ is defined by
\begin{eqnarray}
  q_{\nu_0}(\xi )
  &  =  &  \int_{\Rd\backslash \{0\}}  \big(1-e^{i\scalp{\xi}{u}}+ i\scalp{\xi}{u} \indyk{\{|u|\leq 1\}}\big)\, \nu_0(du)
=   \int_{\Rd\backslash \{0\}}  \big(1-\cos(\scalp{\xi}{u} )\big)\, \nu_0(du)\nonumber\\
&=&\frac{\pi}{2\sin\frac{\pi\alpha}{2}\Gamma(1+\alpha)}
  \int_{\sphere}|\scalp{\xi}{\theta}|^\alpha\, \mu_0(d\theta),\quad \xi \in \Rd\,. \label{DefPhi}
\end{eqnarray}
By scaling and
\eqref{nond},
\begin{equation}\label{eq:Phi_m}
  c_1 |\xi|^\alpha \leq q_{\nu_0}(\xi) \leq c_2 |\xi|^{\alpha},\quad \xi\in\Rd.
\end{equation}
By the L\'evy-Khintchine formula and \eqref{eq:Phi_m} there is a convolution semigroup of probability density functions whose Fourier transform is $\exp(-tq_{\nu_0}(\xi))$, see, e.g.,  \cite{MR0481057, MR1739520}.
If $q_{\nu_0}(\xi)=|\xi|^\alpha$, then the corresponding convolution semigroup $g(t,x)$ satisfies
\begin{equation}\label{eq:itd}
	  g(t,x) \approx t^{-d/\alpha} \wedge \frac{t}{|x|^{d+\alpha}}=G_t^{(d+\alpha)}(x),\quad t>0,\ x\in \Rd\,.
\end{equation}		
The comparison was proved by Blumenthal and Getoor \cite{MR0119247} (see \cite[(29)]{MR3165234} for explicit constants).
In the next section we shall prove variants of the upper bound in \eqref{eq:itd} for the semigroups corresponding to $\nu$. To this end we first learn how to bound integro-differential operators with kernel $\nu$.
In what follows, we denote, as usual,
$$
\diam(A)=\sup\{|x-y|:x,y\in A\}.
$$
We also denote
$$
\delta(A)=\dist(A,0):=\inf\{|x|:x\in A\}.
$$

The lemma below  is an easy consequence of \eqref{e:mn} and \eqref{nu_gamma}.
\begin{lemma} Let
$m_1=\max\{m_0,2^{\gamma}|\mu_0|/\alpha\}$. For every
$A\subset \R$ we have
\begin{equation}\label{eq:nu(A)_est}
	\nu_0(A) \leq m_1\delta(A)^{-\alpha-\gamma}\diam(A)^{\gamma}.
\end{equation}
\end{lemma}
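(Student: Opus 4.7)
The plan is to combine the self-similarity $\nu_0(rA)=r^{-\alpha}\nu_0(A)$ with the Hausdorff-type bound \eqref{nu_gamma} via a simple dichotomy on $\diam(A)/\delta(A)$. The case $\delta(A)=0$ is trivial (the right-hand side equals $+\infty$), so I assume $\delta(A)>0$ and write $\delta:=\delta(A)$, $D:=\diam(A)$. Setting $A':=A/\delta$, we have $\delta(A')=1$ and $\diam(A')=D/\delta$, and by scaling
\[
\nu_0(A)=\delta^{-\alpha}\nu_0(A'),
\]
so it suffices to prove $\nu_0(A')\le m_1 (D/\delta)^{\gamma}$.

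First suppose $D/\delta<1/2$. For every $\varepsilon>0$ pick $x_\varepsilon\in A'$ with $|x_\varepsilon|<1+\varepsilon$ and set $\tilde x_\varepsilon:=x_\varepsilon/|x_\varepsilon|\in\sphere$. Since $\diam(A')=D/\delta$, $A'\subset B(x_\varepsilon,D/\delta)$, and $|x_\varepsilon-\tilde x_\varepsilon|=||x_\varepsilon|-1|<\varepsilon$, we get $A'\subset B(\tilde x_\varepsilon,D/\delta+\varepsilon)$. Choosing $\varepsilon$ small enough that $D/\delta+\varepsilon<1/2$ and applying \eqref{nu_gamma} yields $\nu_0(A')\le m_0(D/\delta+\varepsilon)^\gamma$; letting $\varepsilon\to 0$ gives $\nu_0(A')\le m_0 (D/\delta)^\gamma\le m_1 (D/\delta)^\gamma$.

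Next suppose $D/\delta\ge 1/2$. Then $A'\subset\{x\in\R:|x|\ge 1\}$ and by \eqref{e:mn}
\[
\nu_0(A')\le \int_{\sphere}\mu_0(d\theta)\int_1^\infty r^{-1-\alpha}\,dr
=\frac{|\mu_0|}{\alpha}
=\frac{2^\gamma|\mu_0|}{\alpha}\cdot\Bigl(\tfrac12\Bigr)^\gamma
\le m_1\Bigl(\frac{D}{\delta}\Bigr)^{\gamma},
\]
since $2^\gamma|\mu_0|/\alpha\le m_1$ and $(D/\delta)^\gamma\ge (1/2)^\gamma$. Combining both cases and applying the scaling identity gives \eqref{eq:nu(A)_est}. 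No step poses a real obstacle; the only subtlety is that $A'$ need not meet the unit sphere, which is why the $\varepsilon$-argument is used to bring the covering ball back onto $\sphere$ before invoking \eqref{nu_gamma}.
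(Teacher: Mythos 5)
Your proof is correct and follows essentially the same route as the paper: homogeneity of $\nu_0$ plus a dichotomy on the ratio of $\diam(A)$ to the distance from the origin, using \eqref{nu_gamma} in the small-ratio case and the total mass $\nu_0(B(0,\cdot)^c)=|\mu_0|\,(\cdot)^{-\alpha}/\alpha$ in the large-ratio case. The only cosmetic difference is that the paper rescales by $|x_0|$ for a point $x_0\in A$ (so the covering ball is centered exactly on $\sphere$ and no $\varepsilon$-perturbation is needed), whereas you rescale by $\delta(A)$.
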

\begin{proof}
If $\delta(A)=0$, then \eqref{eq:nu(A)_est} is trivial, so we assume $\delta(A)>0$.
By the homogeneity of $\nu_0$, for every $x_0\in A$,
\begin{eqnarray*}
	\nu_0(A)
	& \leq & \nu_0(B(x_0,\diam(A))\cap B(0,\delta(A))^c) \\
	&  =   & |x_0|^{-\alpha}\nu_0\left(B\left(\frac{x_0}{|x_0|},\frac{\diam(A)}{|x_0|}\right)\cap B\left(0,\frac{\delta(A)}{|x_0|}\right)^c\, \right).
\end{eqnarray*}
If $\diam(A)/|x_0| \leq \frac{1}{2}$, then from \eqref{nu_gamma} we get
\begin{displaymath}
  \nu_0(A) \leq |x_0|^{-\alpha} m_0 \left(\frac{\diam(A)}{|x_0|}\right)^{\gamma} \leq m_0 \,\delta(A)^{-\alpha-\gamma}\diam(A)^{\gamma},
\end{displaymath}
and if $\diam(A)/|x_0| \geq \frac{1}{2}$, then
\begin{eqnarray*}
	\nu_0(A)
	& \leq & |x_0|^{-\alpha} \nu_0\left(B\left(0,\frac{\delta(A)}{|x_0|}\right)^c\, \right)
	  =  |x_0|^{-\alpha} \frac{|\mu_0|}{\alpha} \left(\frac{\delta(A)}{|x_0|}\right)^{-\alpha} \nonumber \\
	& = & \frac{| \mu_0|}{\alpha} \delta(A)^{-\alpha} \leq \frac{2^{\gamma}| \mu_0|}{\alpha} \delta(A)^{-\alpha-\gamma}\diam(A)^{\gamma}.
\end{eqnarray*}
Thus, in either case we get \eqref{eq:nu(A)_est}.
\end{proof}

Let $C^2_b(\rd)$ be the class of all the functions bounded together with their derivatives of order up to $2$.
For $t>0$, $x\in \rd$ and $f\in C_b^2(\Rd)$ we denote
\begin{displaymath}
	\modgener f (x) := \int_{\rd\backslash \{0\}} \left| f(x+u)-f(x)-\scalp{u}{\nabla_x f(x)}\indyk{ \{|u|\leq t^{1/\alpha}\} } 	\right|\, \nu_0(du).
\end{displaymath}
The lemma below is the  main result of this  subsection.
\begin{lemma}\label{l:StGenerEst}
  Let $f:(0,\infty)\times \Rd \to \RR$ be  such that $f_t(\cdot):= f(t,\cdot)\in C^2_b(\Rd)$
  for every $t>0$ and there are constants $K,\zeta>0,\kappa\in [0,\alpha+\gamma-d)$ such that
  \begin{equation}\label{eq:avfest}
	  | \partial^\beta_x  f_t(x)| \leq  K t^{-(\zeta+|\beta|)/\alpha} (1+t^{-1/\alpha}|x|)^{-\gamma-\alpha+\kappa}, \quad x\in\Rd,\, t>0,
  \end{equation}
for every  multiindex $\beta\in \N_0^d$ with $|\beta|=0$ or $2$.
Then $c_{\mathcal{A}}$ exists such that
  \begin{displaymath}
	  \modgener f_t(x)
	  \leq  c_{\mathcal{A}}K  t^{-1-\zeta/\alpha} (1+t^{-1/\alpha}|x|)^{-\gamma-\alpha+\kappa}, \quad x\in\Rd,\, t>0.
  \end{displaymath}
\end{lemma}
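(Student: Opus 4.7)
The plan is to split $\modgener f_t(x)$ into the integral over small jumps $\{|u|\leq t^{1/\alpha}\}$, where the gradient correction is present and the second-order Taylor expansion applies, and over large jumps $\{|u|>t^{1/\alpha}\}$, where $|f_t(x+u)-f_t(x)|$ is bounded crudely by $|f_t(x+u)|+|f_t(x)|$. Throughout I abbreviate $\beta:=\gamma+\alpha-\kappa$; by the standing hypothesis $\kappa\in[0,\alpha+\gamma-d)$, we have $\beta>d$. All relevant integrals of $\nu_0$ will be controlled using either the polar representation \eqref{e:mn} (equivalently the homogeneity $\nu_0(rA)=r^{-\alpha}\nu_0(A)$) or the Hausdorff-type bound \eqref{eq:nu(A)_est}.

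For the small-jump part, Taylor's formula gives $|f_t(x+u)-f_t(x)-\langle u,\nabla f_t(x)\rangle|\leq \tfrac{1}{2}|u|^2\sup_{s\in[0,1]}\|D^2 f_t(x+su)\|$, and \eqref{eq:avfest} with $|\beta|=2$ controls the Hessian. A simple case distinction ($|x|\geq 2t^{1/\alpha}$ forces $|x+su|\geq |x|/2$; $|x|<2t^{1/\alpha}$ makes $(1+t^{-1/\alpha}|x|)^{-\beta}$ bounded below by a constant) gives $(1+t^{-1/\alpha}|x+su|)^{-\beta}\leq C(1+t^{-1/\alpha}|x|)^{-\beta}$ uniformly for $|u|\leq t^{1/\alpha}$ and $s\in[0,1]$. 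Combined with $\int_{|u|\leq t^{1/\alpha}}|u|^2\nu_0(du)=\frac{|\mu_0|}{2-\alpha}t^{(2-\alpha)/\alpha}$, this yields the bound $CKt^{-1-\zeta/\alpha}(1+t^{-1/\alpha}|x|)^{-\beta}$. In the large-jump part, the $|f_t(x)|$ contribution is immediate from \eqref{eq:avfest} with $|\beta|=0$ and $\nu_0(|u|>t^{1/\alpha})=\frac{|\mu_0|}{\alpha}t^{-1}$, giving the same bound.

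The main obstacle is the remaining term $\int_{|u|>t^{1/\alpha}}|f_t(x+u)|\,\nu_0(du)$. Rescaling via $u=t^{1/\alpha}v$ and $y=t^{-1/\alpha}x$ (using $\nu_0(du)=t^{-1}\nu_0(dv)$) reduces the desired bound to the sub-convolution estimate
\[
J(y):=\int_{|v|>1}(1+|y+v|)^{-\beta}\,\nu_0(dv)\leq C(1+|y|)^{-\beta},\qquad y\in\Rd.
\]
For $|y|\leq 2$, $J(y)\leq \nu_0(|v|>1)=|\mu_0|/\alpha$ suffices. For $|y|>2$, I split on $|v|\leq |y|/2$ versus $|v|>|y|/2$. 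In the former, $|y+v|\geq |y|/2$ gives the integrand $\leq C|y|^{-\beta}$ and the $\nu_0$-measure is bounded. In the latter, I use a dyadic decomposition $A_k=\{v:2^{-k-1}<|y+v|\leq 2^{-k}\}\cap\{|v|>|y|/2\}$: for $k$ with $2^{-k}\leq |y|/2$, the inequality \eqref{eq:nu(A)_est} applied to $B(-y,2^{-k})$ gives $\nu_0(A_k)\leq C|y|^{-\alpha-\gamma}2^{-k\gamma}$, while the integrand is $\leq \min(1,C\,2^{k\beta})$. Summing contributes $C|y|^{-\alpha-\gamma}\sum_{k}2^{k(\alpha-\kappa)}$, which is $\leq C|y|^{-\beta}$ after checking each of the regimes $\alpha\gtreqless\kappa$; one also uses $|y|^{-\alpha-\gamma}\leq |y|^{-\beta}$, which holds because $\kappa\geq 0$. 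The remaining dyadic scales $2^{-k}>|y|/2$ are handled by the crude bounds $(1+|y+v|)^{-\beta}\leq 1$ and the tail $\nu_0(\{|v|>r\})\leq C r^{-\alpha}$ derived from the polar form, which gives $\leq C|y|^{-\alpha-\beta}\leq C|y|^{-\beta}$. Assembling the three contributions produces an explicit $c_{\mathcal{A}}$ depending only on $\alpha,\gamma,d,m_0,|\mu_0|,\kappa$, completing the proof.
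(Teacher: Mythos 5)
Your argument is correct and, after the cosmetic rescaling $u=t^{1/\alpha}v$, follows essentially the same route as the paper's proof: Taylor expansion together with $\int_{|u|\le t^{1/\alpha}}|u|^2\,\nu_0(du)=\tfrac{|\mu_0|}{2-\alpha}t^{(2-\alpha)/\alpha}$ for the small jumps, and for the large jumps a dyadic decomposition in $|y+v|$ controlled by the Hausdorff-type bound \eqref{eq:nu(A)_est} plus the trivial tail estimate $\nu_0(\{|v|>r\})=|\mu_0|r^{-\alpha}/\alpha$ (your explicit case analysis in $\alpha\gtreqless\kappa$ is in fact a bit more careful than the paper's corresponding step). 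One small slip in the final region $2^{-k}>|y|/2$: the crude bound $(1+|y+v|)^{-\beta}\le 1$ combined with the tail estimate only yields $C|y|^{-\alpha}$, which need not be $\le C|y|^{-\beta}$ when $\kappa<\gamma$; instead use that $|y+v|>2^{-k-1}\ge |y|/4$ on those scales, so the integrand is $\le C|y|^{-\beta}$ and the product $C|y|^{-\alpha-\beta}$ you state does follow.
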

\begin{proof}
  We have $ \modgener f_t(x)=I_1 + I_2$, where
  \begin{eqnarray*}
	 I_1
	  &  =  &  \int_{|u|\leq t^{1/\alpha}}
	           \left|f_t(x+u) - f_t(x)-\scalp{u}{\nabla_x f_t(x)} \indyk{\{|u|\leq t^{1/\alpha}\}} \right|\, \nu_0(du), \\
	  I_2&  =  & \int_{|u|> t^{1/\alpha}}
	           \left|f_t(x+u) - f_t(x)-\scalp{u}{\nabla_x f_t(x)} \indyk{\{|u|\leq t^{1/\alpha}\}} \right|\, \nu_0(du).
  \end{eqnarray*}
  From the Taylor expansion and \eqref{eq:avfest} we get
  \begin{eqnarray*}
    I_1
    &  =   & \int_{|u|\leq t^{1/\alpha}} \left|f_t(x+u) - f_t(x) - \scalp{u}{\nabla_x f_t(x)} \right|\,   \nu_0(du)  \\
    & \leq & Kd^2 2^{\alpha+\gamma-\kappa-1} \int_{|u|\leq t^{1/\alpha}} |u|^2 t^{-(\zeta+2)/\alpha} (1+t^{-1/\alpha}|x|)^{-\gamma-\alpha+\kappa} \, \nu_0(du) \\
    &  =   & Kd^2 2^{\alpha+\gamma-\kappa-1} t^{-(\zeta+2)/\alpha} (1+t^{-1/\alpha}|x|)^{-\gamma-\alpha+\kappa} \frac{|\mu_0|}{2-\alpha} t^{(2-\alpha)/\alpha} \\
    &  =   & Kc_1 t^{-1-\zeta/\alpha} (1+t^{-1/\alpha}|x|)^{-\gamma-\alpha+\kappa}.
  \end{eqnarray*}
 We split $I_2$ in the following way,
  \begin{eqnarray*}
   I_2&=  &
         \int_{|u| > t^{1/\alpha}} \left| f_t(x-u) - f_t(x) \right|
             \, \nu_0(du) \\
    & \leq & \int_{|u| > t^{1/\alpha}} |f_t(x-u)| \, \nu_0(du) + |f_t(x)|  \int_{|u| > t^{1/\alpha}}\, \nu_0(du) \\
    &  =   & \left( \int_{|u| > t^{1/\alpha},\, |x - u|>t^{1/\alpha}}
             +  \int_{|u| > t^{1/\alpha},\, |x - u|\leq t^{1/\alpha}} \right) \left| f_t(x-u) \right| \, \nu_0(du)
    + \, \left| f_t(x) \right| \frac{|\mu_0|}{\alpha t} \\
    &  = : &  I_{21} + I_{22} + |f_t(x)|  \frac{|\mu_0|}{\alpha t}.
  \end{eqnarray*}
  Using \eqref{eq:nu(A)_est} and \eqref{eq:avfest} we obtain
  \begin{eqnarray*}
    I_{22}
    & \leq & K \int_{|u| > t^{1/\alpha},\, |x - u|\leq t^{1/\alpha}}
               t^{-\zeta/\alpha} \, \nu_0(du) \\
    &   =  & K t^{-\zeta/\alpha} \nu_0 \left( B(x,t^{1/\alpha})\cap B(0,t^{1/\alpha})^c \right) \\
		& \leq & Km_1 t^{-\zeta/\alpha} (2t^{1/\alpha})^{\gamma} \left(\max\{|x|-t^{1/\alpha},t^{1/\alpha}\}\right)^{-\gamma-\alpha} \\
    & \leq & Kc_2 t^{-1-\zeta/\alpha}\left( 1+t^{-1/\alpha}|x| \right)^{-\gamma-\alpha}.
  \end{eqnarray*}
  In order to estimate $I_{21}$ we define
  \begin{eqnarray*}
    J_1
    &   =  & \int_{|u| > t^{1/\alpha},\, \max\{|x|/4,t^{1/\alpha}\} > |x - u|>t^{1/\alpha}} \left| f_t(x-u) \right| \, \nu_0(du),\\
    J_2
    &   =  & \int_{|u| > t^{1/\alpha},\, |x - u|\geq \max\{|x|/4,t^{1/\alpha}\}}
             \left| f_t(x-u) \right| \, \nu_0(du),
  \end{eqnarray*}
  and observe  that $I_{21}=J_1+J_2$. Using \eqref{eq:avfest} we get
  \begin{eqnarray*}
    J_2
    & \leq & \int_{|u| > t^{1/\alpha},\, |x - u|\geq |x|/4} K t^{-\zeta/\alpha} (1+t^{-1/\alpha}|x-u|)^{-\gamma-\alpha+\kappa} \, \nu_0(du) \\
    & \leq & (K |\mu_0|/\alpha) t^{-1-\zeta/\alpha} (1+t^{-1/\alpha}|x|/4)^{-\gamma-\alpha+\kappa} \\
		& \leq & K c_3 t^{-1-\zeta/\alpha} (1+t^{-1/\alpha} |x|)^{-\gamma-\alpha+\kappa}.
  \end{eqnarray*}
If $|x|<4t^{1/\alpha}$, then $J_1=0$. If $|x|\geq 4t^{1/\alpha}$, then $L:=\lfloor \log_2 (t^{-1/\alpha}|x|/4) \rfloor\ge 0$, and
  \begin{eqnarray*}
    J_1
    & \leq & \int_{|u| > t^{1/\alpha},\, |x|/4 > |x - u|>t^{1/\alpha}} K t^{-\zeta/\alpha} (1+t^{-1/\alpha}|x-u|)^{-\gamma-\alpha+\kappa}
             \, \nu_0(du) \\
    & \leq & \sum_{n=0}^{L} \, \int_{2^{n+1}t^{1/\alpha}\geq |x-u| > 2^nt^{1/\alpha}}
             K t^{-\zeta/\alpha} (1+t^{-1/\alpha}|x-u|)^{-\gamma-\alpha+\kappa} \, \nu_0(du) \\
    & \leq & K t^{-\zeta/\alpha} \sum_{n=0}^{L} 2^{-n(\alpha+\gamma-\kappa)} \nu_0\left( B(x,2^{n+1}t^{1/\alpha}) \right) \\
    & \leq & K t^{-\zeta/\alpha} \sum_{n=0}^{L} 2^{-n(\alpha+\gamma-\kappa)} m_1 2^{\alpha+\gamma} |x|^{-\gamma-\alpha} (2^{n+2}t^{1/\alpha})^\gamma \\
 		& \leq & K c_4 t^{(-\zeta+\gamma)/\alpha} |x|^{-\gamma-\alpha}
     \leq  K c_5 t^{-1-\zeta/\alpha}(1+t^{-1/\alpha}|x|)^{-\gamma-\alpha},
  \end{eqnarray*}
  where in the fourth inequality we use \eqref{eq:nu(A)_est} and the fact that $2^{n+1}t^{1/\alpha}\leq |x|/2$ for $n\leq L$.
  We obtain
  \begin{displaymath}
	  I_{2} = I_{21} + I_{22} + |f_t(x)|  \frac{|\mu_0|}{\alpha t}
	  \leq  K c_6 t^{-1-\zeta/\alpha} (1+t^{-1/\alpha}|x|)^{-\gamma-\alpha+\kappa},
  \end{displaymath}
  and the lemma follows.
\end{proof}

\subsection{Estimates of $p_t^z(x)$}

In this section we  estimate the convolution semigroup corresponding to the L\'evy measure $\nu(z,\cdot)$ with fixed but arbitrary $z\in \rd$. We are interested in majorants which are integrable in space, like \eqref{g10b}.
We note that the results \cite{MR2320691} cannot be directly used here because we also need  H\"older continuity of  $z\mapsto p_t^z$, which is crucial for the proof of Theorem~\ref{t-exist}. 
Recall that each $\nu(z,\cdot)$ is symmetric and comparable to $\nu_0$, i.e. it satisfies \eqref{M0}.
Therefore,
\begin{equation}\label{qu1}
  q(z,\xi) :=\int_{\Rd\backslash\{0\}} (1-e^{i\xi \cdot u}+i\xi \cdot u \I_{\{|u|\leq 1\}})\, \nu(z,du)=
  \int_{\Rd\backslash\{0\}} (1-\cos \xi \cdot u) \, \nu(z,du),
\end{equation}
is real-valued and  there exist constants $c,C>0$ such that
\begin{equation}\label{growth}
c |\xi|^\alpha \leq q(z,\xi) \leq C|\xi|^\alpha, \quad \xi, z\in\Rd.
\end{equation}
By \eqref{growth},
\begin{equation}\label{pty}
p_{t}^z(x):= (2\pi)^{-d} \int_\Rd e^{-ix\cdot \xi -tq(z,\xi)}\, d\xi,\quad t>0, \, x\in \rd,
\end{equation}
is infinitely smooth in $t$ and $x$. Note that  for each $z$, $(p_t^z)_{t>0}$ is a convolution semigroup of probability densities.
The operator $L^z$ equals the generator of the semigroup on $C^2_0(\Rd)$, see, e.g., \cite{MR3514392}. Therefore,
\begin{equation}\label{Couchy_for_Levy}
  \partial_t p_{t}^z(x) = L^z p_t^z (x),\quad t>0,\ x\in\rd,
\end{equation}
and so
$$
  \partial_t p_t^z(x) = \int_\rd \left( p_t^z(x+u) - p_t^z(x) - \scalp{u}{\nabla_x p_t^z(x)} \indyk{\{|u|\leq 1\}}  \right) \, \nu(z,du).
$$
We recall  the definition \eqref{g10} and give an approximation for convolutions of $G^{(\beta)}_t$.
\begin{lemma}\label{subconvG}
  For every $\beta\in (d,d+2)$,
  \begin{equation}\label{convG}
	  \int_{\rd} G_{t-s}^{(\beta)}(x-z)G_s^{(\beta)}(z)\,dz \approx G_t^{(\beta)}(x),\quad x\in\Rd,\ 0<s<t.
	\end{equation}
\end{lemma}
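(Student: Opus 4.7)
The plan is to exploit the explicit two-regime description
\[
G_t^{(\beta)}(x)=t^{-d/\alpha}\quad\text{if }|x|\le t^{1/\alpha},\qquad G_t^{(\beta)}(x)=t^{(\beta-d)/\alpha}|x|^{-\beta}\quad\text{if }|x|\ge t^{1/\alpha},
\]
combined with three elementary facts that follow by a short case inspection: (i) $G_t^{(\beta)}$ is radially nonincreasing; (ii) the doubling estimate $G_t^{(\beta)}(y/2)\le 2^\beta G_t^{(\beta)}(y)$; and (iii) if $s\le t/2$, then $G_{t-s}^{(\beta)}(x)\le C\,G_t^{(\beta)}(x)$. Fact (iii) is verified by separately considering $|x|\le (t-s)^{1/\alpha}$, $(t-s)^{1/\alpha}<|x|\le t^{1/\alpha}$ and $|x|>t^{1/\alpha}$; the constraint $\beta>d$ is exactly what makes the outer regime behave monotonically in time. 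Also recall from \eqref{g10b} that $\int_{\rd}G_s^{(\beta)}(z)\,dz=\int G^{(\beta)}$ is independent of $s$.

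For the upper bound, by the symmetry of the convolution I may assume $s\le t/2$, so $t-s\ge t/2$. I split the integral in \eqref{convG} at $|z|=|x|/2$. On $\{|z|\le |x|/2\}$ one has $|x-z|\ge |x|/2$, hence by (i)–(iii)
\[
G_{t-s}^{(\beta)}(x-z)\le G_{t-s}^{(\beta)}(x/2)\le 2^\beta G_{t-s}^{(\beta)}(x)\le C\,G_t^{(\beta)}(x),
\]
and integrating the remaining factor $G_s^{(\beta)}(z)$ yields the constant $\int G^{(\beta)}$. On $\{|z|>|x|/2\}$ I use instead $G_s^{(\beta)}(z)\le G_s^{(\beta)}(x/2)$; a two-case check (whether $|x|\le 2t^{1/\alpha}$ or $|x|>2t^{1/\alpha}$, the second being where the outer regime applies and $s\le t$ is used) gives $G_s^{(\beta)}(x/2)\le C\,G_t^{(\beta)}(x)$, and the remaining factor $G_{t-s}^{(\beta)}(x-z)$ integrates to a constant.

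For the lower bound it is enough to restrict the integral to the ball $\{|z|\le s^{1/\alpha}/2\}$, on which $G_s^{(\beta)}(z)=s^{-d/\alpha}$ is at its maximum. Splitting again into $|x|\le t^{1/\alpha}$ and $|x|>t^{1/\alpha}$: in the first case $|x-z|\lesssim t^{1/\alpha}$, so $G_{t-s}^{(\beta)}(x-z)\approx t^{-d/\alpha}\approx G_t^{(\beta)}(x)$; in the second case $|x-z|\le 2|x|$ together with $t-s\ge t/2$ gives $G_{t-s}^{(\beta)}(x-z)\gtrsim t^{(\beta-d)/\alpha}|x|^{-\beta}\approx G_t^{(\beta)}(x)$. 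Multiplying by the volume $\approx s^{d/\alpha}$ of the integration region recovers $G_t^{(\beta)}(x)$ up to a constant.

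The main obstacle is purely bookkeeping: tracking the constants across the several case splits and verifying the time-scale comparison (iii). Once the three preliminary facts above are laid out, the rest of the argument is a short decomposition of $\rd$ together with the uniform $L^1$-bound from \eqref{g10b}.
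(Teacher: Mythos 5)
Your lower bound is fine, and so is the half of the upper bound on $\{|z|\le|x|/2\}$, but the other half of the upper bound rests on a false inequality. You claim $G_s^{(\beta)}(x/2)\le C\,G_t^{(\beta)}(x)$ for $0<s\le t/2$, verified by a two-case check in $|x|$. In the case $|x|\le 2t^{1/\alpha}$ this fails: take $x=0$ and $s=t/n$; then $G_s^{(\beta)}(x/2)=s^{-d/\alpha}=n^{d/\alpha}\,t^{-d/\alpha}$ while $G_t^{(\beta)}(x)=t^{-d/\alpha}$, so no constant works uniformly in $n$ (the same happens whenever $|x|\lesssim s^{1/\alpha}\ll t^{1/\alpha}$). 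The underlying problem is that pulling $G_s^{(\beta)}$ out of the integral at its peak value $s^{-d/\alpha}$ and then integrating $G_{t-s}^{(\beta)}$ over all of $\rd$ discards the cancellation between the height $s^{-d/\alpha}$ of that peak and the volume $\approx s^{d/\alpha}$ of the set where it is attained. The repair is short: for $|x|\le 2t^{1/\alpha}$ do not split at all, but bound the whole convolution by $\|G_{t-s}^{(\beta)}\|_\infty\,\|G_s^{(\beta)}\|_1=(t-s)^{-d/\alpha}\int_\rd G^{(\beta)}(y)\,dy\le 2^{d/\alpha}t^{-d/\alpha}\int_\rd G^{(\beta)}(y)\,dy\le C\,G_t^{(\beta)}(x)$, using \eqref{g10b} and $t-s\ge t/2$; your two-region decomposition is then needed (and is valid) only for $|x|>2t^{1/\alpha}$, where $|x|/2>t^{1/\alpha}>s^{1/\alpha}$ places $G_s^{(\beta)}(x/2)$ in the outer regime and $s\le t$, $\beta>d$ give $G_s^{(\beta)}(x/2)\le 2^\beta G_t^{(\beta)}(x)$.

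For what it is worth, the paper's proof avoids this bookkeeping entirely: with $\delta=\beta-d\in(0,2)$ it invokes the Blumenthal--Getoor two-sided bound $G_t^{(\beta)}(x)\approx g(t^{\delta/\alpha},x)$ for the rotation-invariant $\delta$-stable density $g$, uses the exact Chapman--Kolmogorov identity for $g$, and concludes from $(t-s)^{\delta/\alpha}+s^{\delta/\alpha}\approx t^{\delta/\alpha}$. Your direct route is more elementary and self-contained, and once patched as above it gives a complete proof; as written, however, the upper bound does not go through.
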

\begin{proof}
Denote $\delta=\beta-d$.
  Let $g(t,x)$
	be the density function of the isotropic rotation invariant $\delta$-stable L\'evy process. 
	We have $G_t^{(\beta)}(x) \approx g(t^{\delta/\alpha},x)$, see, e.g., \cite{MR0119247}.
	\begin{eqnarray*}
	  \int_{\rd} G_{t-s}^{(\beta)}(x-z)G_s^{(\beta)}(z)\,dz
		& \approx & \int_{\rd} g((t-s)^{\delta/\alpha},x-z)g(s^{\delta/\alpha},z)\,dz \\
		&	  =    & g((t-s)^{\delta/\alpha}+s^{\delta/\alpha},x).
	\end{eqnarray*}
	Since
	$$
	  t^{\delta/\alpha} \leq (t-s)^{\delta/\alpha} + s^{\delta/\alpha} \leq 2 t^{\delta/\alpha},
	$$
	from \eqref{eq:itd} we get
	$$
	  g((t-s)^{\delta/\alpha}+s^{\delta/\alpha},x) \approx
		t^{-d/\alpha} \left(1\vee \frac{|x|}{t^{1/\alpha}}\right)^{-\beta}
		= G_t^{(\beta)}(x),
	$$
	and \eqref{convG} follows. 	
\end{proof}
\begin{lemma}\label{th-ptx2}
For every $\beta\in\N_0^d$ there is $c=c(\nu_0,\beta, M_0)>0$
such that
$$
  |\partial^\beta_x p_t^z(x)| \leq c t^{-|\beta|/\alpha}  G_t^{(\alpha+\gamma)}(x) ,\quad t>0,\, x,z \in\Rd.
$$
\end{lemma}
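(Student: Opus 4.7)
The plan is to establish the bound in two regimes separated by the natural scale $t^{1/\alpha}$, combining direct Fourier estimates with a compound Poisson decomposition of $p_t^z$ adapted from Sztonyk~\cite{MR2320691}. All constants will be uniform in $z$ thanks to \eqref{M0} and \eqref{growth}. In the near field $|x|\le t^{1/\alpha}$, differentiating \eqref{pty} under the integral sign and invoking the lower bound $q(z,\xi)\ge c|\xi|^\alpha$ from \eqref{growth} yields
\begin{equation*}
|\partial_x^\beta p_t^z(x)|\le (2\pi)^{-d}\int_{\Rd}|\xi|^{|\beta|}e^{-tq(z,\xi)}\,d\xi\le c\int_{\Rd}|\xi|^{|\beta|}e^{-ct|\xi|^\alpha}\,d\xi\le c'\,t^{-(d+|\beta|)/\alpha},
\end{equation*}
uniformly in $x$ and $z$. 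Since $G_t^{(\alpha+\gamma)}(x)\approx t^{-d/\alpha}$ in this regime, this settles the near-field case for every $\beta$.

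For the far field $|x|>t^{1/\alpha}$ I would split $\nu(z,du)=\nu^{<}(z,du)+\nu^{>}(z,du)$ into small-jump ($|u|\le t^{1/\alpha}$) and large-jump parts, obtaining the factorization $p_t^z=p_t^{z,<}\ast\mu_t^{z,>}$, where $\mu_t^{z,>}=e^{-t\lambda}\sum_{n\ge 0}\frac{t^n}{n!}(\nu^{>}(z,\cdot))^{\ast n}$ and $\lambda=\nu(z,B(0,t^{1/\alpha})^c)$; note that $t\lambda\le c$ by \eqref{M0} and the $\alpha$-homogeneity of $\nu_0$. Transferring the derivatives onto the smooth factor gives
\begin{equation*}
\partial_x^\beta p_t^z(x)=\int_{\Rd}\partial_x^\beta p_t^{z,<}(x-y)\,\mu_t^{z,>}(dy).
\end{equation*}
A separate Fourier estimate for $p_t^{z,<}$ based on $q^{<}(z,\xi)\ge c(|\xi|^\alpha\wedge t^{(2-\alpha)/\alpha}|\xi|^2)$ yields $\|\partial_x^\beta p_t^{z,<}\|_\infty\le c\,t^{-(d+|\beta|)/\alpha}$, while Bennett's inequality (applicable because the small-jump process has jumps bounded by $t^{1/\alpha}$) supplies the exponential tail $|\partial_x^\beta p_t^{z,<}(x)|\le c\,t^{-(d+|\beta|)/\alpha}e^{-c|x|/t^{1/\alpha}}$ for $|x|>t^{1/\alpha}$. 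The dominant contribution to the integral comes from $|x-y|\le t^{1/\alpha}$ and is bounded by $\|\partial_x^\beta p_t^{z,<}\|_\infty\cdot\mu_t^{z,>}(B(x,t^{1/\alpha}))$. The key input is the Hausdorff-type estimate \eqref{eq:nu(A)_est} combined with \eqref{M0}, giving $\nu^{>}(z,B(x,t^{1/\alpha}))\le c\,t^{\gamma/\alpha}|x|^{-\alpha-\gamma}$ for $|x|>2t^{1/\alpha}$; analogous bounds on the convolutions $(\nu^{>})^{\ast n}$ summed against the Poisson weights then upgrade this to $\mu_t^{z,>}(B(x,t^{1/\alpha}))\le c\,t^{(\alpha+\gamma)/\alpha}|x|^{-\alpha-\gamma}$, producing the desired estimate $|\partial_x^\beta p_t^z(x)|\le c\,t^{-|\beta|/\alpha}G_t^{(\alpha+\gamma)}(x)$. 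The complementary region $|x-y|>t^{1/\alpha}$ is subdominant by the exponential tail of $\partial_x^\beta p_t^{z,<}$ balanced against the total mass of $\mu_t^{z,>}$.

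The main obstacle is producing the anisotropic polynomial rate $|x|^{-\alpha-\gamma}$ in the far field: a straightforward integration by parts in the Fourier integral would force an isotropic $|x|^{-2N}$ decay, which is unattainable along directions where $\mu_0$ is supported on lower-dimensional sets (and where $q(z,\cdot)$ can fail to be smooth). The compound-Poisson route sidesteps this difficulty by recasting the question as a volume estimate for $\nu(z,\cdot)$ on small balls away from the origin, where the Hausdorff-type hypothesis \eqref{nu_gamma} applies directly through \eqref{eq:nu(A)_est}. A secondary technical point is the control of the higher convolutions $(\nu^{>})^{\ast n}$, which do not inherit the $\gamma$-measure property verbatim; an iterative argument isolating a single ``large last jump'' of size $\approx|x|$ and confining the other $n-1$ jumps near the origin preserves the correct decay, and convergence of the series is guaranteed by $t\lambda\le c$.
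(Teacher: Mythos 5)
Your proposal follows essentially the same route as the paper's proof: the uniform near-field Fourier bound from \eqref{growth}, the factorization of $p_t^z$ at the jump threshold $t^{1/\alpha}$ into a smooth small-jump density convolved with the compound Poisson measure of the large jumps, ball estimates for the convolution powers of the truncated kernel derived from the Hausdorff-type condition \eqref{eq:nu(A)_est} (this is Proposition~\ref{Conv_est}, quoted from \cite{MR3357585}), and the recombination of the two factors. The only substantive deviations are that the paper obtains polynomial decay of arbitrarily high order for $\partial_x^\beta\tilde p_t$ via moment bounds and \cite{MR2995789} instead of an exponential tail (note that Bennett's inequality controls tail probabilities, not derivatives of densities, so you would need an exponential-tilting argument there), and that in your complementary region $|x-y|>t^{1/\alpha}$ the total mass of $\mu_t^{z,>}$ alone only reproduces the $t^{-(d+|\beta|)/\alpha}$ bound, so you must still invoke the ball estimates on dyadic annuli of radius up to $|x|/2$ exactly as in the paper's layer-cake computation.
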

The proof of Lemma~\ref{th-ptx2} relies on auxiliary results which we give  first.
Fix an arbitrary
$z\in\Rd$. Let $\vartheta(\cdot)=\nu(z,\cdot).$
For $\varepsilon>0$ let $\bar{\vartheta}_{\varepsilon}=\indyk{B(0,\varepsilon)^c}\vartheta$,
$\tilde{\vartheta}_\varepsilon=\indyk{B(0,\varepsilon)}\vartheta$, and
$$
  q_{\bar{\vartheta}_{\varepsilon}} (\xi) = \int_\rd \big(1-e^{i\xi \cdot u}\big)\, \bar{\vartheta}_{\varepsilon}(du),
  \quad q_{\tilde{\vartheta}_\varepsilon}(\xi) = \int_\rd \big(1-e^{i\scalp{\xi}{u}}+i\scalp{\xi}{u} \indyk{\{|u|\leq 1\}}\big)\, \tilde{\vartheta}_\varepsilon(du),
$$
and
$$
  \tilde{p}^\varepsilon_t(x)=\big( \mathcal{F}^{-1} \exp(-tq_{\tilde{\vartheta}_\varepsilon}(\cdot))\big)(x), \quad t>0,\, x \in\Rd,
$$
where  $\mathcal{F}^{-1}$ is the inverse Fourier transform.
By \eqref{eq:Phi_m} we see that $\tilde{p}^\varepsilon_t(x)$ is smooth.
Further,  the  probability measure with the characteristic function  $\exp(-tq_{\bar{\vartheta}_{\varepsilon}} (\xi))$ is
\begin{equation}\label{eexp}
\bar{P}^\varepsilon_t(dy)=e^{-t|\bar{v}_\varepsilon|}
    \sum_{n=0}^\infty
    \frac{t^n\bar{v}_\varepsilon^{*n}(dy)}{n!}\,,\quad t>0.
\end{equation}
We have
\begin{equation}\label{e:wpt}
  p_t^z=
  \tilde{p}^\varepsilon_t
  *
  \bar{P}^\varepsilon_t
  \,.
\end{equation}
The first step in the proof of Lemma~\ref{th-ptx2} is to estimate the terms in the series (\ref{eexp}).
The following Lemma is a version of \cite[Lemma ~1]{MR2320691} and \cite[Cor.~10]{MR3357585}.
\begin{proposition}\label{Conv_est}
There exists
$m_3>0$ such that for $\varepsilon>0$ and $n\geq 1$,
$$
  \bar{\vartheta}_{\varepsilon}^{*n}(B(x,r)) \leq
   m_3^n  \varepsilon^{-(n-1)\alpha}|x|^{-\alpha-\gamma} r^\gamma\,,\quad x\in\R\setminus\{ 0 \},\, r<|x|/2.
$$
Consequently,
$$
	\bar{P_t^\varepsilon}\left(B(x,r)\right) \leq \varepsilon^\alpha e^{m_3\varepsilon^{-\alpha}t} |x|^{-\alpha-\gamma} r^\gamma.
$$
\end{proposition}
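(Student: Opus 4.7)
I would prove the first inequality by induction on $n$, adapting Sztonyk's strategy in \cite[Lem.~1]{MR2320691} (cf.\ \cite[Cor.~10]{MR3357585}). The base case $n=1$ is immediate: since $r<|x|/2$ gives $\delta(B(x,r))=|x|-r>|x|/2$ and $\diam(B(x,r))=2r$, assumption \textbf{A2} combined with \eqref{eq:nu(A)_est} yields
\[
\bar{\vartheta}_\varepsilon(B(x,r))\le M_0\nu_0(B(x,r))\le M_0 m_1\,2^{\alpha+2\gamma}|x|^{-\alpha-\gamma}r^\gamma,
\]
so the bound holds with $m_3\ge M_0m_12^{\alpha+2\gamma}$.

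For the inductive step I would use $\bar{\vartheta}_\varepsilon^{*(n+1)}=\bar{\vartheta}_\varepsilon^{*n}*\bar{\vartheta}_\varepsilon$ and decompose
\[
\bar{\vartheta}_\varepsilon^{*(n+1)}(B(x,r))=\int_\Rd\bar{\vartheta}_\varepsilon^{*n}(B(x-y,r))\,\bar{\vartheta}_\varepsilon(dy),
\]
splitting the integration at $|x-y|=2r$. On the far region $\{|x-y|>2r\}$ the inductive hypothesis applies pointwise (since $r<|x-y|/2$), reducing the contribution to estimating $\int_{|x-y|>2r}|x-y|^{-\alpha-\gamma}\bar{\vartheta}_\varepsilon(dy)$, which I would handle by a layer-cake decomposition into dyadic annuli around $x$, applying \eqref{eq:nu(A)_est} on annuli of radii below $|x|/2$ and the total-mass bound $|\bar{\vartheta}_\varepsilon|\le c\varepsilon^{-\alpha}$ beyond. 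On the near region $\{|x-y|\le 2r\}$ I would bound the integrand by $\bar{\vartheta}_\varepsilon^{*n}(\Rd)\le c^n\varepsilon^{-n\alpha}$ and control the measure $\bar{\vartheta}_\varepsilon(B(x,2r))$ either through \eqref{eq:nu(A)_est} when $r<|x|/4$, or via a sub-decomposition of $B(x,2r)$ into balls of radius $<|x|/4$ otherwise. Combining both contributions, with $m_3$ enlarged if necessary to absorb the new constants, gives the claim for $n+1$.

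The second inequality then follows from the series representation \eqref{eexp}: for $r<|x|/2$ the origin lies outside $B(x,r)$ so the $n=0$ term vanishes, and for $n\ge 1$ the first bound and summation give
\[
\bar{P}_t^\varepsilon(B(x,r))\le e^{-t|\bar{\vartheta}_\varepsilon|}|x|^{-\alpha-\gamma}r^\gamma\sum_{n=1}^\infty\frac{(tm_3)^n\varepsilon^{-(n-1)\alpha}}{n!}\le\varepsilon^\alpha e^{m_3\varepsilon^{-\alpha}t}|x|^{-\alpha-\gamma}r^\gamma.
\]
The principal obstacle I expect is the bookkeeping in the inductive step near the boundary of the hypothesis (when $r\to|x|/2$ or $r<\varepsilon$), where the naive estimates on $\bar{\vartheta}_\varepsilon(B(x,2r))$ and on the layer-cake integral become insufficient; there one must interleave the Hausdorff-type bound \eqref{eq:nu(A)_est} with the $\alpha$-scaling of the total mass to keep the implicit constants uniform in $n$, which is precisely the delicate point in Sztonyk's original argument.
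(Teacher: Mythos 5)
First, note that the paper does not prove the first inequality by induction at all: its proof is a one-line reduction to \cite[Lemma 9]{MR3357585}, applied with $f(s)=s^{-\alpha-\gamma}$, followed by $\mathfrak{q}_{\nu_0}(1/\varepsilon)\approx\varepsilon^{-\alpha}$; so you are attempting to reconstruct that external lemma. Your base case and your passage from the $n$-fold bound to the bound on $\bar{P}^\varepsilon_t$ via \eqref{eexp} are both correct. The gap is in the inductive step, and the difficulty you flag at the end is not bookkeeping but a genuine obstruction. In your far region you must show $\int_{|x-y|>2r}|x-y|^{-\alpha-\gamma}\,\bar{\vartheta}_{\varepsilon}(dy)\leq C\varepsilon^{-\alpha}|x|^{-\alpha-\gamma}$ with $C$ independent of $n$ in order to gain the extra factor $\varepsilon^{-\alpha}$. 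But already the first annulus $\{2r<|x-y|\leq 4r\}$ contributes up to $(2r)^{-\alpha-\gamma}\cdot c\,|x|^{-\alpha-\gamma}r^{\gamma}\asymp |x|^{-\alpha-\gamma}r^{-\alpha}$ (this order is attained, e.g., for the one-dimensional isotropic $\nu_0$ when $\varepsilon\ll|x|$), and $r^{-\alpha}\gg\varepsilon^{-\alpha}$ whenever $r\ll\varepsilon$. Interleaving with the total-mass bound $\bar{\vartheta}_{\varepsilon}^{*n}(\Rd)\leq (c\varepsilon^{-\alpha})^{n}$ does not repair this: if you switch from the inductive bound to the total-mass bound at a threshold radius $R$, the total-mass side costs about $\varepsilon^{-n\alpha}|x|^{-\alpha-\gamma}R^{\gamma}$ and the tail side about $\varepsilon^{-(n-1)\alpha}|x|^{-\alpha-\gamma}r^{\gamma}R^{-\alpha}$, and no choice of $R$ makes both $\lesssim\varepsilon^{-n\alpha}|x|^{-\alpha-\gamma}r^{\gamma}$ once $r\ll\varepsilon$. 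Your near region has a symmetric defect when $r$ is comparable to $|x|/2$: then $B(x,2r)$ reaches down to distance $o(|x|)$ from the origin, \eqref{eq:nu(A)_est} is not applicable there, and $\bar{\vartheta}_{\varepsilon}(B(x,2r))$ can be of order $\varepsilon^{-\alpha}$ rather than $|x|^{-\alpha-\gamma}r^{\gamma}$; covering by smaller balls does not help, since some of them necessarily sit near the origin.

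The argument behind the cited lemma is structurally different: one does not peel off one convolution factor, but pigeonholes over all $n$ increments simultaneously. Write $\bar{\vartheta}_{\varepsilon}^{*n}(A)=\int\cdots\int\indyk{A}(y_1+\cdots+y_n)\,\bar{\vartheta}_{\varepsilon}(dy_1)\cdots\bar{\vartheta}_{\varepsilon}(dy_n)$ with $A=B(x,r)$ and $\delta(A)\geq|x|/2$; then $y_1+\cdots+y_n\in A$ forces $|y_i|\geq |x|/(2n)$ for some $i$. Summing over $i$ and integrating the distinguished variable last gives $\bar{\vartheta}_{\varepsilon}^{*n}(A)\leq n\,|\bar{\vartheta}_{\varepsilon}|^{n-1}\sup_{z}\bar{\vartheta}_{\varepsilon}\bigl((A-z)\cap B(0,|x|/(2n))^{c}\bigr)$. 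The set $(A-z)\cap B(0,|x|/(2n))^{c}$ has diameter at most $2r$ and distance at least $|x|/(2n)$ from the origin, so \textbf{A2} and \eqref{eq:nu(A)_est} bound its measure by $M_0 m_1 (2n)^{\alpha+\gamma}2^{\gamma}|x|^{-\alpha-\gamma}r^{\gamma}$, while $|\bar{\vartheta}_{\varepsilon}|^{n-1}\leq (M_0|\mu_0|/\alpha)^{n-1}\varepsilon^{-(n-1)\alpha}$. Since $n^{1+\alpha+\gamma}\leq C^{n}$, this yields the claim with no induction and no case analysis in $r$. I recommend replacing your inductive step by this computation (or simply citing \cite[Lemma 9]{MR3357585}, as the paper does).
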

\begin{proof}
The result follows from  \cite[Lemma 9]{MR3357585}. Indeed, one can check that the conditions  (23) from  \cite[Lemma 9]{MR3357585} hold true with $f(s)=s^{-\alpha-\gamma}$, which gives
\begin{equation}\label{eq:nu_n*_est}
     \bar{\vartheta}_{\varepsilon}^{*n}(A) \leq c^n \mathfrak{q}_{\nu_{0}}(1/\varepsilon)^{n-1}
    f\left(\delta(A)/2\right)\diam(A)^{\gamma},
\end{equation}
where
$ \mathfrak{q}_{\nu_0}(r)=\sup_{|\xi|\leq r}  q_{\nu_0}(\xi)$, $ r>0$.
Since $\mathfrak{q}_{\nu_0}(r)\approx r^\alpha$, we get the required estimates.  \qedhere
\end{proof}

We denote $\bar{P}_t=\bar{P}_t^{t^{1/\alpha}}$,
$\tilde{p}_t=\tilde{p}_t^{t^{1/\alpha}}$ and $\tilde{\vartheta}=\tilde{\vartheta}_{t^{1/\alpha}}$.

\begin{lemma}\label{Cor:tildepest}
For every $n\in \N_0$ and  $\beta\in\N_0^d$ there is
$c>0$ 
such that
\begin{equation}\label{e:cest1}
  |\partial^\beta_x \tilde{p}_t(x)| \leq c t^{-|\beta|/\alpha}  G_t^{(n)}(x),\quad t>0,x\in\Rd.
\end{equation}
\end{lemma}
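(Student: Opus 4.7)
The strategy is classical Fourier inversion, exploiting the fact that $\tilde{\vartheta}$ is supported in $B(0,t^{1/\alpha})$, so its Lévy--Khintchine exponent $q_{\tilde{\vartheta}}$ is $C^\infty$ in $\xi$ with derivatives controlled by powers of $t$. The starting point is
\[
\partial^\beta_x \tilde{p}_t(x) = (2\pi)^{-d} \int_{\Rd} (-i\xi)^\beta e^{-ix\cdot\xi - tq_{\tilde{\vartheta}}(\xi)}\,d\xi.
\]
First I would establish a lower bound $q_{\tilde{\vartheta}}(\xi) \geq c|\xi|^\alpha$ valid on $|\xi|\geq Kt^{-1/\alpha}$ for $K$ large, by combining \eqref{growth} with the estimate $q(z,\xi)-q_{\tilde{\vartheta}}(\xi) \leq 2\nu(z,B(0,t^{1/\alpha})^c) \leq ct^{-1}$, which is negligible compared to $c|\xi|^\alpha$ once $|\xi|^\alpha\geq K^\alpha t^{-1}$. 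In the complementary ball $\{|\xi|\le Kt^{-1/\alpha}\}$ the integrand is bounded simply by $|\xi|^{|\beta|}$, and this already gives
\[
|\partial^\beta_x \tilde{p}_t(x)| \leq c\int\bigl(\I_{|\xi|\leq Kt^{-1/\alpha}} + e^{-ct|\xi|^\alpha}\bigr)|\xi|^{|\beta|}\,d\xi \leq c\,t^{-(d+|\beta|)/\alpha},
\]
which matches the claim on $|x|\leq t^{1/\alpha}$, where $G_t^{(n)}(x)=t^{-d/\alpha}$.

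For $|x|>t^{1/\alpha}$ I would integrate by parts $n$ times along the direction $e_x:=x/|x|$, using $(ie_x\!\cdot\!\nabla_\xi)^n e^{-ix\cdot\xi}=|x|^n e^{-ix\cdot\xi}$, to obtain
\[
|x|^n|\partial^\beta_x \tilde{p}_t(x)| \leq c\int \bigl|(e_x\!\cdot\!\nabla_\xi)^n\bigl[\xi^\beta e^{-tq_{\tilde{\vartheta}}(\xi)}\bigr]\bigr|\,d\xi.
\]
Expand by Leibniz and Faà di Bruno, so that every derivative landing on the exponential contributes a factor $t\,\partial^{\gamma_j}_\xi q_{\tilde{\vartheta}}(\xi)$. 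For $|\gamma|\geq 2$ direct integration under the integral gives
\[
|\partial^\gamma_\xi q_{\tilde{\vartheta}}(\xi)| \leq M_0\int_{|u|\leq t^{1/\alpha}}|u|^{|\gamma|}\,\nu_0(du) \leq c\,t^{(|\gamma|-\alpha)/\alpha},
\]
while for $|\gamma|=1$ the bound $|u_j\sin(\xi\!\cdot\!u)|\leq |u|^2|\xi|$ yields $|\partial_\xi q_{\tilde{\vartheta}}(\xi)|\leq c|\xi|\,t^{(2-\alpha)/\alpha}$. Performing the rescaling $\xi=t^{-1/\alpha}\eta$ transforms every resulting term into a $t$-independent absolutely integrable majorant (times an explicit power of $t$), producing
\[
|x|^n|\partial^\beta_x \tilde{p}_t(x)| \leq c\,t^{(n-|\beta|-d)/\alpha},\qquad |x|>t^{1/\alpha},
\]
which is precisely the required bound in this regime, since $G_t^{(n)}(x)\approx t^{(n-d)/\alpha}|x|^{-n}$ there.

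The main obstacle is the combinatorial bookkeeping in the $n$-fold Leibniz/Faà di Bruno expansion: one gets a sum of products $\xi^{\beta-a}\prod_j(t\,\partial^{\gamma_j}_\xi q_{\tilde{\vartheta}})$ indexed by partitions $|a|+\sum_j|\gamma_j|=n$, and each term must be shown, after the rescaling $\xi=t^{-1/\alpha}\eta$, to yield an integrable majorant uniform in $t>0$ and $z\in\Rd$. The first-order factors $t\partial_\xi q_{\tilde{\vartheta}}$ carry an extra $|\xi|$ that is absorbed by neighbouring polynomial factors after scaling, while the integration by parts is justified by the super-polynomial decay of $e^{-tq_{\tilde{\vartheta}}(\xi)}$ outside $\{|\xi|\leq Kt^{-1/\alpha}\}$, so no boundary contributions appear. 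The argument is then identical in structure for arbitrary $n\in\N_0$ and $\beta\in\N_0^d$.
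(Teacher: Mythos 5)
Your proof is correct, but it takes a different route from the paper's. The paper rescales, setting $g_t(x)=t^{d/\alpha}\tilde p_t(t^{1/\alpha}x)$, verifies that the rescaled characteristic exponent $\phi_t$ and L\'evy measure $\eta_t$ satisfy $\int|\xi|^k e^{-\RRR\phi_t(\xi)}d\xi\le c$ and $\int|u|^k\eta_t(du)\le c$ uniformly in $t$ (using \eqref{M0} and the truncation at $t^{1/\alpha}$), and then invokes \cite[Prop.~2.1]{MR2995789} to conclude $|\partial^\beta_x g_t(x)|\le c(1+|x|)^{-n}$. You instead carry out, from scratch, essentially the Fourier-analytic argument that underlies that cited proposition: the lower bound $q_{\tilde\vartheta}(\xi)\ge c|\xi|^\alpha$ for $|\xi|\ge Kt^{-1/\alpha}$ (obtained by subtracting the tail mass $\nu(z,B(0,t^{1/\alpha})^c)\le ct^{-1}$ from \eqref{growth}), the derivative bounds $|\partial^\gamma_\xi q_{\tilde\vartheta}(\xi)|\le ct^{(|\gamma|-\alpha)/\alpha}$ for $|\gamma|\ge 2$ and $\le c|\xi|t^{(2-\alpha)/\alpha}$ for $|\gamma|=1$, and $n$-fold integration by parts in the direction $x/|x|$ followed by the scaling $\xi=t^{-1/\alpha}\eta$; the power count $(n-|\beta|-d)/\alpha$ comes out right, matching $G_t^{(n)}(x)\approx t^{(n-d)/\alpha}|x|^{-n}$ for $|x|>t^{1/\alpha}$, and the boundary terms vanish because $e^{-tq_{\tilde\vartheta}}$ and its derivatives decay like $e^{-ct|\xi|^\alpha}$ times polynomials. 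What your approach buys is self-containedness and explicit constants at the cost of the Leibniz/Fa\`a di Bruno bookkeeping you acknowledge; what the paper's approach buys is brevity, outsourcing exactly that bookkeeping to the reference. Both hinge on the same two facts: the truncated measure has all moments of order $\ge 2$ bounded by $ct^{(k-\alpha)/\alpha}$, and the exponent still dominates $|\xi|^\alpha$ at high frequencies.
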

\begin{proof}
  Let $g_t(x) = t^{d/\alpha} \tilde{p}_t(t^{1/\alpha}x)$, $t>0$, $x\in \Rd$. For each $t$, $g_t(x)$ is the density function of an infinitely divisible distribution. We denote by    $\phi_t(\xi)$ and $\eta_t(du)$ the corresponding characteristic exponent and L\'evy measure, respectively.
   To prove \eqref{e:cest1} we will apply  \cite[Prop. 2.1]{MR2995789}, for which it suffices to check that
  \begin{equation}\label{cond1}
  \int_\rd |\xi|^k e^{- \RRR \phi_t(\xi)}d\xi< c, \quad \int_\rd |u|^{k
  } \eta_t(du)<c
  \end{equation}
for every $k\geq 2$ with constant $c$ independent of $t$. Indeed, a direct calculation gives $\eta_t(A)=t\tilde{\vartheta}_{t^{1/\alpha}}(t^{1/\alpha}A)$. Then
by \eqref{M0} we get
$$
	\int |y|^k\, \eta_t(dy) \leq M_0 t \int_{|y|<t^{1/\alpha}} \left(\frac{|y|}{t^{1/\alpha}}\right)^k\, \nu_0(dy)  = \frac{M_0 |\mu_0|}{k-\alpha}.
$$
Further,
\begin{eqnarray*}
	\RRR \phi_t (\xi)
	&  =   & \int \left( 1 - \cos \left( \scalp{\xi}{y} \right)\right)\, \eta_t(dy)
	 \geq  M_0^{-1} t \int_{|y|<t^{1/\alpha}} \left( 1 - \cos \left( \scalp{\xi}{\frac{y}{t^{1/\alpha}}} \right)\right)\, \nu_0(dy) \\
	&  =   & M_0^{-1} t q_{\nu_0}(\xi/t^{1/\alpha})
	         - M_0^{-1} t \int_{|y|\geq t^{1/\alpha}} \left( 1 - \cos \left( \scalp{\xi}{\frac{y}{t^{1/\alpha}}} \right)\right)\, \nu_0(dy) \\
	& \geq & M_0^{-1} t q_{\nu_0}(\xi/t^{1/\alpha}) - M_0^{-1} t \nu_0(B(0,t^{1/\alpha})^c) \geq c_1 |\xi|^\alpha - c_2.
\end{eqnarray*}
Therefore,
\begin{equation*}
  \int e^{-\RRR \phi_t(\xi)}|\xi|^k \, d\xi
 \leq  e^{c_2} \int e^{-c_1|\xi|^{\alpha}}|\xi|^k\, d\xi
 \leq  c_3 < \infty.
\end{equation*}
Thus, \eqref{cond1} holds true and applying the result from  \cite[Prop.2.1]{MR2995789} we get
\begin{displaymath}
	|\partial^\beta_x g_t(x)| \leq c_4 \left(1+|x| \right)^{-n},\quad n\ge 0,\, t>0, \, x\in \rd.
\end{displaymath}
 Coming back to $\tilde{p}_t$ we get the desired estimate.
\end{proof}

\begin{proof}[Proof of Lemma~\ref{th-ptx2}]
   We have
 \begin{eqnarray*}
    |\partial^\beta_x p_t^z(x) |
  &   =  & | \left(2\pi\right)^{-d} (-i)^{|\beta|} \int \xi^\beta e^{-i\scalp{x}{\xi}} e^{-tq_\vartheta(\xi)}\, d\xi |\\
  & \leq & \left(2\pi\right)^{-d} \int |\xi|^{|\beta|} e^{-t c_1 |\xi|^\alpha} \, d\xi
     =   c_2 t^{(-d-|\beta|)/\alpha},\quad t>0, \, x\in\Rd.
 \end{eqnarray*}
 Using Proposition~\ref{Conv_est}  and Lemma~\ref{Cor:tildepest} with $n\geq \alpha+\gamma$, for $|x|>2t^{1/\alpha}$ we obtain
  \begin{eqnarray*}
    &&\left|\partial_x^\beta\left(\tilde{p}_t \ast \bar{P}_t \right)(x)\right|
       =   \left| \int_\rd \partial_x^\beta \tilde{p}_t(x-y) \bar{P}_t(dy) \right|
     \leq \int_\rd \left| \partial_x^\beta \tilde{p}_t(x-y) \right| \bar{P}_t(dy) \\
    && \leq  c_{3}t^{ \frac{-d-|\beta|}{\alpha} } \int_\rd (1+t^{-1/\alpha}|x-y|)^{-n} \bar{P}_t(dy)\\
    &&  =   c_{3}t^{ \frac{-d-|\beta|}{\alpha} } \int_\rd \int_0^{(1+t^{-1/\alpha}|x-y|)^{-n}} \,ds\, \bar{P}_t(dy)\\
    && =   c_{3}t^{ \frac{-d-|\beta|}{\alpha} } \int_0^1 \int_\rd
             \indyk{ (1+t^{-1/\alpha}|x-y|)^{-n}>s} \, \bar{P}_t(dy) ds \\
    &&   =   c_{3} t^{ \frac{-d-|\beta|}{\alpha} } \int_0^1 \bar{P}_t\left(B(x,t^{1/\alpha}(s^{-\frac{1}{n}}-1))\right) ds,
    \end{eqnarray*}
    thus
  \begin{eqnarray*}
    &&\left|\partial_x^\beta\left(\tilde{p}_t \ast \bar{P}_t \right)(x)\right| \\
    && \leq  c_4 t^{ \frac{-d-|\beta|}{\alpha} }  \Big(\int_{(1+\frac{|x|}{2t^{1/\alpha}})^{-n}}^1 t |x|^{-\alpha-\gamma}
              \Big(t^{1/\alpha}(s^{-\frac{1}{n}}-1) \Big)^\gamma\, ds
             + \int_0^{(1+\frac{|x|}{2t^{1/\alpha}})^{-n}}\, ds \Big) \\
    && \leq  c_4 t^{ \frac{-d-|\beta|}{\alpha} } \Big( t^{1+\gamma/\alpha} |x|^{-\alpha-\gamma} \int_0^1
             s^{-\gamma/n}\, ds +  \Big(1+\frac{|x|}{2t^{1/\alpha}} \Big)^{-n}  \Big) \\
    &&  =    c_5 t^{ \frac{-d-|\beta|}{\alpha} }  \Big( t^{1+\gamma/\alpha} |x|^{-\alpha-\gamma}
             + \Big(1+\frac{|x|}{2t^{1/\alpha}}\Big)^{-n}  \Big)
              \leq  c_6 t^{ \frac{-d-|\beta|}{\alpha} } \Big(1+\frac{|x|}{t^{1/\alpha}} \Big)^{-\alpha-\gamma}. \qedhere
  \end{eqnarray*}
\end{proof}
For the regularity in time we have another estimate. Here the spatial bound is satisfactory, cf. \eqref{g10b}, and the temporal growth at $t=0$ will later be tempered by making use of cancellations.
\begin{lemma}\label{l:dertxpest}
  For every $\beta\in\N_0^d$ there exists a constant $c>0$ such that
  \begin{equation}\label{ptx-der}
	  |\partial_t \partial^\beta_x p_t^z (x)| \leq  c t^{-1-|\beta|/\alpha}  G_t^{(\alpha+\gamma)}(x),\quad x,z\in\Rd,t>0.
\end{equation}
\end{lemma}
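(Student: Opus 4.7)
The plan is to reduce the time derivative to a spatial integro-differential expression by using the forward equation \eqref{Couchy_for_Levy}, and then apply Lemma~\ref{l:StGenerEst} with the spatial estimates of Lemma~\ref{th-ptx2} as input.

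First I would differentiate \eqref{Couchy_for_Levy} in $x$. Because $p_t^z$ is smooth with rapidly decaying spatial derivatives (Lemma~\ref{th-ptx2}), differentiation under the integral sign is legitimate, so
\begin{equation*}
\partial_t \partial^\beta_x p_t^z(x) = \partial^\beta_x L^z p_t^z(x) = L^z\bigl(\partial^\beta_x p_t^z\bigr)(x).
\end{equation*}
By symmetry of $\nu(z,\cdot)$ the truncation level in the integrand of $L^z$ may be freely changed: odd terms integrate to zero, so
\begin{equation*}
L^z g(x) = \int_{\Rd}\!\bigl[g(x+u)-g(x)-u\cdot\nabla g(x)\indyk{\{|u|\le t^{1/\alpha}\}}\bigr]\,\nu(z,du)
\end{equation*}
for any $g\in C^2_b(\Rd)$. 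Combining this with \eqref{M0} yields the pointwise majorization
\begin{equation*}
\bigl|L^z g(x)\bigr|\le M_0\,\modgener g(x).
\end{equation*}

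Next I would apply this with $g = \partial^\beta_x p_t^z$. By Lemma~\ref{th-ptx2}, for every multiindex $\beta'$,
\begin{equation*}
\bigl|\partial^{\beta'}_x(\partial^\beta_x p_t^z)(x)\bigr|
= \bigl|\partial^{\beta+\beta'}_x p_t^z(x)\bigr|
\le c\,t^{-(|\beta|+|\beta'|)/\alpha}G_t^{(\alpha+\gamma)}(x),
\end{equation*}
and the latter is comparable to $c\,t^{-(|\beta|+d+|\beta'|)/\alpha}(1+t^{-1/\alpha}|x|)^{-\alpha-\gamma}$. Thus the hypothesis \eqref{eq:avfest} of Lemma~\ref{l:StGenerEst} is satisfied by $f_t := \partial^\beta_x p_t^z$ with the choice $\zeta = |\beta|+d$, $\kappa = 0$, and some constant $K = K(\nu_0,\beta,M_0)$. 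The admissibility of $\kappa = 0$ follows from assumption~\textbf{A1}, which gives $\alpha+\gamma-d>0$.

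The conclusion of Lemma~\ref{l:StGenerEst} then yields
\begin{equation*}
\modgener f_t(x)\le c_{\mathcal{A}}K\,t^{-1-(|\beta|+d)/\alpha}\bigl(1+t^{-1/\alpha}|x|\bigr)^{-\alpha-\gamma}
\approx c\,t^{-1-|\beta|/\alpha}G_t^{(\alpha+\gamma)}(x),
\end{equation*}
and inserting this into the majorization for $L^z$ gives
\begin{equation*}
\bigl|\partial_t\partial^\beta_x p_t^z(x)\bigr|=\bigl|L^z f_t(x)\bigr|\le M_0\,c_{\mathcal{A}}K\,t^{-1-|\beta|/\alpha}G_t^{(\alpha+\gamma)}(x),
\end{equation*}
which is the claim. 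No serious obstacle is expected; the only mildly subtle point is the symmetry-based change of truncation level that allows $L^z$ to be dominated by $\modgener$ with the scale-correct cutoff $t^{1/\alpha}$, which is essential for matching the powers of $t$ delivered by Lemma~\ref{l:StGenerEst}.
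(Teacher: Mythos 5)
Your proposal is correct and follows essentially the same route as the paper's proof: rewrite $\partial_t\partial^\beta_x p_t^z$ via \eqref{Couchy_for_Levy}, use the symmetry of $\nu(z,\cdot)$ to switch the truncation to $t^{1/\alpha}$ and \textbf{A2} to dominate by $\modgener(\partial^\beta_x p_t^z)$, then feed Lemma~\ref{th-ptx2} into Lemma~\ref{l:StGenerEst} with $\zeta=|\beta|+d$, $\kappa=0$. No gaps.
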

\begin{proof}
 It follows from \eqref{pty} and \eqref{M0} that
  \begin{displaymath}
	   \partial_t \partial^\beta_x p_t^z (x)  =   \partial^\beta_x \partial_t p_t^z (x) = (2\pi)^{-d} \int_\Rd q(z,\xi) (-1)^{|\beta|+1} \xi^\beta e^{-ix\cdot \xi -tq(z,\xi)}\, d\xi.
  \end{displaymath}
  Recall that
  \begin{displaymath}
	  \partial_t p_t^z(x) = \int_\rd \left( p_t^z(x+u) - p_t^z(x) - \scalp{u}{\nabla_x p_t^z(x)} \indyk{\{|u|\leq 1\}}  \right) \, \nu(z,du),
  \end{displaymath}
 cf. \eqref{Couchy_for_Levy}.
  Differentiating with respect to $x$ and   using \textbf{A2} we get
  \begin{eqnarray*}
	  \left| \partial_t \partial^\beta_x p_t^z (x) \right|
	  &  =   & \left| \partial^\beta_x \partial_t p_t^z (x) \right| \\
	  &  =   & \Big| \int \left( \partial^\beta_x p_t^z (x+u) - \partial^\beta_x p_t^z(x) - \partial^{\beta}_x \scalp{u}{\nabla_x  p_t^z(x)}
	           \indyk{\{|u|\leq 1 \}} \right)
	           \, \nu(z,du) \Big| \\
	  & \leq & C \int \left| \partial^\beta_x p_t^z (x+u) - \partial^\beta_x p_t^z(x) - \scalp{u}{ \nabla_x \partial^{\beta}_x p_t^z(x)}\indyk{\{ |u|\leq t^{1/\alpha} \}} \right| \, \nu_0 (du).
	\end{eqnarray*}
 Using  Proposition~\ref{th-ptx2} and Lemma~\ref{l:StGenerEst} we obtain \eqref{ptx-der}.
\end{proof}

The H\"older continuity of  $z\mapsto p_t^z$,
will be proved in Lemma \ref{hol10} after some auxiliary lemmas. In the first one
we record the symmetry of the operators $L^w$.
\begin{lemma}\label{Lw_symmetry}
 For every $w\in\Rd$ the operator $L^w$ is symmetric, i.e.,
  \begin{equation}\label{eq:symmofL}
	  \int_{\Rd} \varphi(x) L^w f (x)\, dx = \int_{\Rd} L^w \varphi(x) f(x)\, dx
	\end{equation}
	for all $\varphi,f\in C^2_b(\Rd)\cap L^1(\Rd)$.
\end{lemma}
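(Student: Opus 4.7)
The strategy is a direct Fubini argument combined with an affine change of variables, using that the symmetric second difference $f(x+u)+f(x-u)-2f(x)$ is itself invariant under $u\mapsto -u$. I would write
$$
\int_{\Rd} \varphi(x)L^w f(x)\,dx = \tfrac{1}{2}\int_{\Rd}\int_{\Rd}\varphi(x)\bigl[f(x+u)+f(x-u)-2f(x)\bigr]\,\nu(w,du)\,dx,
$$
where the inner integral against $\nu(w,du)$ is understood as an absolutely convergent integral (no principal value is needed since $f\in C_b^2$, as I discuss next).

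The main point is to justify swapping the order of integration. Since $f\in C_b^2(\Rd)$, Taylor's theorem gives
$$
|f(x+u)+f(x-u)-2f(x)| \leq \min\bigl(\|D^2 f\|_\infty |u|^2,\; 4\|f\|_\infty\bigr) \leq C_f\,(1\wedge |u|^2),
$$
with $C_f = \max(\|D^2 f\|_\infty,4\|f\|_\infty)$. Combined with $\varphi\in L^1(\Rd)$ and \eqref{lev} (which holds for $\nu(w,\cdot)$ by \textbf{A2} and \eqref{eq:V0Lm}), this yields
$$
\int_{\Rd}\int_{\Rd}|\varphi(x)|\,|f(x+u)+f(x-u)-2f(x)|\,dx\,\nu(w,du) \leq C_f\|\varphi\|_{L^1}\!\!\int_{\Rd}(1\wedge |u|^2)\,\nu(w,du) < \infty,
$$
so Fubini's theorem applies. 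Once the order is swapped, I apply the translations $x\mapsto y-u$ in the first term and $x\mapsto y+u$ in the second, which give
$$
\int_{\Rd}\varphi(x)\bigl[f(x+u)+f(x-u)-2f(x)\bigr]dx = \int_{\Rd}f(y)\bigl[\varphi(y-u)+\varphi(y+u)-2\varphi(y)\bigr]dy.
$$
Plugging back in and applying Fubini again (with the roles of $\varphi$ and $f$ exchanged, whose absolute integrability is justified in exactly the same way since $\varphi\in C_b^2\cap L^1$ and $f\in L^1$) produces $\int_{\Rd}f(y)L^w\varphi(y)\,dy$, which is \eqref{eq:symmofL}.

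I do not expect any genuine obstacle here: the argument is purely Fubini plus translation invariance of Lebesgue measure, and the symmetry property \eqref{eq:symm} of $\nu$ in the jump variable is not even needed, since the symmetrized second-difference form already encodes it. The only subtlety is verifying that the $\delta\to 0$ limit in \eqref{lxd0} indeed equals the absolutely convergent integral $\tfrac12\int[f(x+u)+f(x-u)-2f(x)]\nu(w,du)$ for $f\in C_b^2$; this follows from the same Taylor estimate above by dominated convergence as $\delta\to 0$, so the identity holds with either interpretation of $L^w$.
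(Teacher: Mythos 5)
Your proof is correct and follows essentially the same route as the paper's: Fubini's theorem plus a translation change of variables. The only cosmetic difference is that the paper first establishes the identity for the truncated operators $L^{w,\delta}$ and then lets $\delta\to 0$ by dominated convergence (invoking the symmetry of $\nu(w,\cdot)$ along the way), whereas you work directly with the absolutely convergent symmetrized integral, which also justifies your remark that the symmetry of $\nu$ in $u$ is not actually needed for this lemma.
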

\begin{proof} For every $\delta>0$ we have
$$
\int\int_{|u|>\delta} |f(x+u)\varphi(x)|\nu(w,du) dx \leq \|f\|_\infty \nu(w,B(0,\delta)^c) \|\varphi \|_1 < \infty.
$$
Hence,  by Fubini's theorem, change of variables and symmetry of $\nu(w,\cdot)$ we get
  \begin{eqnarray*}
	  \int_{\Rd} \varphi(x) \int_{|u|>\delta} f (x+u) \nu(w,du) \, dx
		&  =  & \int_{\Rd} f(y) \int_{|u|>\delta} \varphi(y+u) \, \nu(w,du)\, dy.
	\end{eqnarray*}
	By subtracting
	$
	  \int_{\Rd} \int_{|u|>\delta} f(x)\varphi(x) \nu(w,du) dx,
	$
	we obtain $$\int_{\Rd} \varphi(x) L^{w,\delta} f (x) \, dx = \int_{\Rd} L^{w,\delta} \varphi(x) f(x)\, dx.$$
	Let $\delta \to 0$. By dominated convergence we get \eqref{eq:symmofL}, since for  $g\in C^2_b(\Rd)$, $\delta\in (0,1)$,
	\begin{eqnarray*}
	  |L^{w,\delta} g (x)|
		&   =  & \left|\int_{|u|>\delta} \left( g(x+u)-g(x)-\indyk{B(0,1)}(u)\scalp{\nabla g (x)}{u}\right)\, \nu(w,du) \right| \\
		& \leq & \tfrac{1}{2} d^2\|g\|_2 \int_{|u|<1} |u|^2\, \nu(w,du) + 2 \| g\|_\infty \int_{|u|>1}\, \nu(w,du) < \infty.
	\end{eqnarray*}
	Here, as usual, $\|g\|_2=\sup_{x\in\Rd, \beta\in\N_0^2} |\partial^\beta g(x)|$.
\end{proof}

\begin{corollary}\label{Lanihil}
For every $f\in C^2_b(\Rd)\cap L^1(\Rd)$ and $w\in\Rd$ such that $L^w f \in L^1(\Rd)$,
$$
  \int_{\Rd} L^w f(x)\, dx = 0.
$$
\end{corollary}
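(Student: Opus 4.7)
The plan is to pair $L^w f$ against a sequence of smooth cutoffs that approximate the constant function $1$, transfer $L^w$ onto the cutoffs via the symmetry of Lemma~\ref{Lw_symmetry}, and then show the transferred side vanishes in the limit. Fix $\varphi \in C_c^\infty(\Rd)$ with $0\le \varphi \le 1$ and $\varphi\equiv 1$ on $B(0,1)$, and set $\varphi_n(x):=\varphi(x/n)$. Each $\varphi_n$ belongs to $C^2_b(\Rd)\cap L^1(\Rd)$, so Lemma~\ref{Lw_symmetry} yields
\[
  \int_{\Rd} \varphi_n(x)\, L^w f(x)\, dx \;=\; \int_{\Rd} L^w \varphi_n(x)\, f(x)\, dx.
\]

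For the left-hand side, $\varphi_n \to 1$ pointwise and $|\varphi_n L^w f|\le |L^w f|\in L^1(\Rd)$, so dominated convergence gives the limit $\int_{\Rd} L^w f(x)\, dx$. To handle the right-hand side, it suffices to show $\|L^w \varphi_n\|_\infty\to 0$, since $f\in L^1(\Rd)$. Using the symmetric form of $L^w$ together with a second-order Taylor expansion for $|u|\le n$, together with the bounds $\|D^2\varphi_n\|_\infty \lesssim n^{-2}$ and $\|\varphi_n\|_\infty\le 1$, one obtains
\[
  |L^w\varphi_n(x)| \;\le\; C\!\int_{\Rd}\!\bigl(n^{-2}|u|^2 \wedge 1\bigr)\,\nu(w,du),
\]
uniformly in $x$. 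For $n\ge 1$ the integrand is dominated by $1\wedge |u|^2$ and tends to $0$ pointwise as $n\to\infty$, so condition \eqref{lev} (together with \eqref{M0}) and dominated convergence force $\|L^w\varphi_n\|_\infty \to 0$. Combining the two limits gives $\int_{\Rd} L^w f(x)\, dx = 0$.

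I do not expect any serious obstacle. The only mildly delicate point is producing the uniform pointwise bound on $L^w\varphi_n$ with a majorant integrable against $\nu(w,\cdot)$, but this is an immediate consequence of the Lévy condition \eqref{lev} and the scaling $\varphi_n(x)=\varphi(x/n)$; verifying the technical hypotheses $\varphi_n\in C^2_b(\Rd)\cap L^1(\Rd)$ needed to invoke Lemma~\ref{Lw_symmetry} is automatic from the choice of $\varphi$.
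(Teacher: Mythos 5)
Your proof is correct and follows essentially the same route as the paper's: test $L^w f$ against smooth cutoffs $\varphi_n$, transfer $L^w$ onto $\varphi_n$ via Lemma~\ref{Lw_symmetry}, and show the transferred side vanishes. The only (immaterial) difference is that you prove the stronger statement $\|L^w\varphi_n\|_\infty\to 0$ via the uniform bound $C\int (n^{-2}|u|^2\wedge 1)\,\nu(w,du)$, whereas the paper establishes pointwise convergence $L^w\varphi_n(x)\to 0$ together with a uniform-in-$n$ bound and then invokes dominated convergence against $f\in L^1$.
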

\begin{proof}
  Let $\varphi_n \in C^\infty_c(\Rd)$ be such that $0\leq\varphi_n(x)\leq 1$ and $\varphi_n(x) = 1$ for every $x\in B(0,n)$
	and $\|\varphi_n \|_2 \leq c_0$ for every $n\in\N$. Note that
$$
 |L^w \varphi_n(x)| \leq c_1 \int_{|u|<1} |u|^2\, \nu(w,du) + 2 \int_{|u|\geq 1}\, \nu(w,du) < \infty,
$$
	and for $n > |x|$
	we have
	$$
|L^w \varphi_n(x)| = | \int_{|x+u|> n} \left( \varphi_n(x+u)-1 \right)\, \nu(w,du) | \leq 2 \nu(w,B(-x,n)^c),
$$
	which yields $\lim_{n\to \infty} L^w \varphi_n(x) = 0$ for every $x\in\Rd$.
	By the symmetry of $L^w$,
	$$
	  \int_{\Rd} \varphi_n(x) L^w f (x)\, dx = \int_{\Rd} L^w \varphi_n(x) f(x)\, dx,
	$$
	and the corollary follows by the dominated convergence  theorem.
	\end{proof}
	
\begin{lemma}\label{phi_properties}
  Let $t>0$, $x,w_1,w_2\in\rd$ and
	$$
  \phi(s) = \left\{
  \begin{array}{ccc}
    p^{w_1}_{t-s}*p^{w_2}_s(x)& \mbox{  if  } & s\in (0,t),\\
    p_t^{w_1}(x) & \mbox{  if  } & s=0, \\
		p_t^{w_2}(x) & \mbox{  if  } & s=t.
  \end{array}\right.
  $$
	Then $\phi$ is continuous on $[0,t]$, $\partial_s \phi(s)$ exists on $(0,t)$ and
$$
  \partial_s \phi(s) = \int_{\rd} \left( p_{t-s}^{w_1}(z-x) L^{w_2} p_s^{w_2}(z)-
   p_s^{w_2}(z)L^{w_1}p_{t-s}^{w_1}(z-x)\right)\, dz.
$$
\end{lemma}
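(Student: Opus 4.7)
The plan is to obtain the derivative formula on the open interval $(0,t)$ by differentiating under the integral sign, and to handle continuity at the two endpoints of $[0,t]$ via the weak convergence of the convolution semigroups to $\delta_0$. All necessary ingredients are already in place: \eqref{Couchy_for_Levy} provides the identity $\partial_r p^w_r=L^w p^w_r$, Lemma~\ref{th-ptx2} gives pointwise control of $p^w_r$ and its spatial derivatives, and Lemma~\ref{l:dertxpest} gives pointwise control of $\partial_r p^w_r$.

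Since $\nu(w,\cdot)$ is symmetric, $q(w,\xi)$ is real, hence $p^w_r$ is an even function; for $s\in(0,t)$ we therefore rewrite
$$
\phi(s)=\int_\Rd p^{w_1}_{t-s}(z-x)\,p^{w_2}_s(z)\,dz.
$$
Fix $s_0\in(0,t)$ and a closed neighborhood $J\subset(0,t)$ of $s_0$. By Lemmas~\ref{th-ptx2} and \ref{l:dertxpest}, both the integrand and its $s$-derivative are, for $s\in J$, dominated uniformly in $s$ by fixed integrable functions of $z$ (use $\alpha+\gamma>d$ to get $G^{(\alpha+\gamma)}_r\in L^1(\Rd)$). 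Lebesgue's dominated convergence theorem then permits differentiation under the integral:
$$
\partial_s\phi(s)=\int_\Rd\bigl[p^{w_1}_{t-s}(z-x)\,\partial_s p^{w_2}_s(z)-p^{w_2}_s(z)\,\partial_t p^{w_1}_{t-s}(z-x)\bigr]\,dz,
$$
and the identity $\partial_t p^w_r=L^w p^w_r$ from \eqref{Couchy_for_Levy} converts this into the asserted formula.

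For continuity at $s=0$ (the case $s=t$ is completely analogous, with the roles of the two factors exchanged), write
$$
\phi(s)-p^{w_1}_t(x)=\int\bigl[p^{w_1}_{t-s}(z-x)-p^{w_1}_t(z-x)\bigr]p^{w_2}_s(z)\,dz+\Bigl(\int p^{w_1}_t(z-x)\,p^{w_2}_s(z)\,dz-p^{w_1}_t(x)\Bigr).
$$
The first summand is bounded by $\sup_z|p^{w_1}_{t-s}(z-x)-p^{w_1}_t(z-x)|$, which tends to $0$ by the uniform-in-$z$ continuity of $r\mapsto p^{w_1}_r(\,\cdot\,)$ at $r=t$; this uniform continuity follows from Lemma~\ref{l:dertxpest} (with $\beta=0$), since $r-t$ stays away from the singular point $r=0$ as $s\to 0^+$. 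The second summand is the action of the signed measure $p^{w_2}_s\,dz-\delta_0$ on the bounded continuous function $z\mapsto p^{w_1}_t(z-x)$ (continuous by Lemma~\ref{th-ptx2}, and, in fact, uniformly continuous), and it vanishes because $p^{w_2}_s\,dz\Rightarrow\delta_0$ weakly as $s\to 0^+$ (the characteristic function $e^{-sq(w_2,\cdot)}\to 1$ pointwise, cf.\ \eqref{pty} and \eqref{growth}). Continuity at an interior $s_0\in(0,t)$ is automatic from the existence of $\partial_s\phi$.

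The main technical hurdle is producing a uniform-in-$s$ integrable majorant for the $s$-derivative of the integrand; the sub-convolution inequality of Lemma~\ref{subconvG}, namely $\int G^{(\alpha+\gamma)}_{t-s}(z-x)G^{(\alpha+\gamma)}_s(z)\,dz\le c\,G^{(\alpha+\gamma)}_t(x)$ with $c$ independent of $s\in(0,t)$, is the natural device for securing such control without having to track the delicate behaviour of $G^{(\alpha+\gamma)}_r$ as $r$ varies over the compact subinterval $J$, and would in addition underlie any future a priori estimate of $|\partial_s\phi(s)|$ that may be needed elsewhere in the parametrix construction.
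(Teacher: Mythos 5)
Your proposal is correct and follows essentially the same route as the paper: the derivative on $(0,t)$ is obtained by differentiating under the integral sign using \eqref{Couchy_for_Levy} together with a locally uniform integrable majorant from Lemmas~\ref{th-ptx2} and \ref{l:dertxpest} (the paper's \eqref{pse2}), and the endpoint continuity uses the identical two-term decomposition. The only cosmetic differences are that you control the first continuity term by a sup-norm bound and invoke weak convergence $p^{w_2}_s\,dz\Rightarrow\delta_0$ where the paper appeals to strong continuity of the semigroup, and your closing appeal to Lemma~\ref{subconvG} is not actually needed for the dominated-convergence step.
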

\begin{proof}
  We have
	\begin{eqnarray*}
	  |\phi(s)-\phi(0)|
		&  =   & \left| \int_{\rd} p_{t-s}^{w_1}(x-z)p_s^{w_2}(z)\, dz - p_{t}^{w_1}(x)\right| \\
		& \leq & \left| \int_{\rd}  p_{t}^{w_1}(x-z)p_s^{w_2}(z)\, dz - p_{t}^{w_1}(x) \right| \\
		&       & +
		         \int_{\rd} |p_{t-s}^{w_1}(x-z) - p_{t}^{w_1}(x-z)| p_s^{w_2}(z)\, dz \\
		& = & I_1(s) + I_2(s),
	\end{eqnarray*}
	and $\lim_{s\to 0} I_1(s) = 0$, since the semigroup $P^{w_2}_s f(x)= \int_{\rd} f(x-z) p_s^{w_2}(z)\, dz$ is strongly
	continuous and $p_{t}^{w_1}(x-\cdot)\in C_0(\rd)$.
	For $s\in (0,t/2)$ from Lemma \ref{l:dertxpest} we get
	\begin{eqnarray*}
	  &&|p_{t-s}^{w_1}(x-z) - p_{t}^{w_1}(x-z)|
		 \leq  s \! \sup_{u\in (t-s,t)}|\partial_u p_{u}^{w_1}(x-z)|  \\
		&& \leq  c s \sup_{u\in (t-s,t)} \left\{ u^{-1} G_u^{(\alpha+\gamma)}(x-z)\right\}
		\leq  c st^{-1-d/\alpha} \left(\frac{|x-z|}{t^{1/\alpha}}\vee 1\right)^{-\alpha-\gamma}.
	\end{eqnarray*}
From the strong continuity of $s\mapsto P^{w_2}_s$ we have
	$$
	  \lim_{s\to 0} \int_{\rd} \left(\frac{|x-z|}{t^{1/\alpha}}\vee 1\right)^{-\alpha-\gamma} \!\!\!p_s^{w_2}(z)\, dz =
		\left(\frac{|x|}{t^{1/\alpha}}\vee 1\right)^{-\alpha-\gamma},
	$$
therefore $\lim_{s\to 0} I_2(s) = 0$.
	 This yields the continuity of $\phi$ at $s=0$. The proof
	of the continuity at $s=t$ is analogous.

	For every $z\in\rd$ and $s\in (0,t)$ from \eqref{Couchy_for_Levy} we obtain
	$$
	  \partial_s \left( p_{t-s}^{w_1}(x-z)p_s^{w_2}(z) \right) = p_{t-s}^{w_1}(z-x) L^{w_2} p_s^{w_2}(z)-
   p_s^{w_2}(z)L^{w_1}p_{t-s}^{w_1}(z-x).
	$$
	From Lemma \ref{th-ptx2} and Lemma \ref{l:StGenerEst} we get
	$$
	  |\partial_s \left( p_{t-s}^{w_1}(x-z)p_s^{w_2}(z) \right)| \leq c G_s^{(\alpha+\gamma)}(z)  G_{t-s}^{(\alpha+\gamma)}(z-x) \left( s^{-1} + (t-s)^{-1} \right),
	$$
	hence for every $\delta\in (0, t/2)$ and $s\in (\delta,t-\delta)$ we have
	\begin{equation}\label{pse2}
	  |\partial_s \left( p_{t-s}^{w_1}(x-z)p_s^{w_2}(z) \right)| \leq
		2c \delta^{-1-2d/\alpha} \Big(\frac{|z|}{t^{1/\alpha}}\vee 1\Big)^{-\alpha-\gamma}\Big(\frac{|z-x|}{t^{1/\alpha}}\vee 1\Big)^{-\alpha-\gamma},
	\end{equation}
	and since $\int (\tfrac{|z|}{t^{1/\alpha}}\vee 1)^{-\alpha-\gamma} (\tfrac{|z-x|}{t^{1/\alpha}}\vee 1)^{-\alpha-\gamma} \, dz< \infty$, this yields	
	$$
	  \partial_s \phi(s) = \int_{\rd} \left( p_{t-s}^{w_1}(z-x) L^{w_2} p_s^{w_2}(z)-
   p_s^{w_2}(z)L^{w_1}p_{t-s}^{w_1}(z-x)\right)\, dz, \quad s\in (0,t).  \qedhere
	$$
\end{proof}

\begin{lemma}\label{hol10}
For each $\beta\in \mathbb{N}_0^d$ and $\theta\in(0,\eta\wedge(\alpha+\gamma-d))$ there is $c>0$ such that
\begin{equation}\label{hol10-eq}
\big| \partial^\beta_x p_t^{w_1}(x)-\partial^\beta_x p_t^{w_2}(x)\big|\leq c(|w_1-w_2|^\eta\wedge 1) t^{-|\beta|/\alpha} G_t^{(\alpha+\gamma-\theta)}(x),
\end{equation}
for all $x,w_1,w_2\in \rd$, $t >0 $.
\end{lemma}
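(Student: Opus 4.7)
The plan is to interpolate between $p_t^{w_1}(x)$ and $p_t^{w_2}(x)$ through the function
\[
\phi(s):=p_{t-s}^{w_1}*p_s^{w_2}(x),\qquad s\in[0,t],
\]
which by Lemma~\ref{phi_properties} satisfies $\phi(0)=p_t^{w_1}(x)$, $\phi(t)=p_t^{w_2}(x)$, is continuous on $[0,t]$, and is differentiable on $(0,t)$. Combining the formula for $\partial_s\phi(s)$ from that lemma with the self-adjointness of $L^{w_i}$ on $C_b^2(\rd)\cap L^1(\rd)$ (Lemma~\ref{Lw_symmetry}), I shift $L^{w_2}$ from $p_s^{w_2}$ onto $p_{t-s}^{w_1}(\,\cdot\,-x)$ (and, independently, $L^{w_1}$ from $p_{t-s}^{w_1}(\,\cdot\,-x)$ onto $p_s^{w_2}$). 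This produces the twin representations
\[
\partial_s\phi(s)=\int_{\rd}(L^{w_2}-L^{w_1})p_{t-s}^{w_1}(z-x)\,p_s^{w_2}(z)\,dz = \int_{\rd}p_{t-s}^{w_1}(z-x)\,(L^{w_2}-L^{w_1})p_s^{w_2}(z)\,dz.
\]
Since $\partial_x^\beta$ commutes with both convolution in $z$ and with the operators $L^{w_i}$ (whose kernels do not depend on $x$), the same identities persist with $\partial^\beta p_{t-s}^{w_1}$ or $\partial^\beta p_s^{w_2}$ replacing the factor on which $(L^{w_2}-L^{w_1})$ does not act.

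The core pointwise input is the following consequence of assumption~\textbf{A2} and the symmetry of $\nu_0$: for every $g\in C_b^2(\rd)$ and every $u>0$,
\[
|(L^{w_2}-L^{w_1})g(y)|\leq M_0(|w_1-w_2|^\eta\wedge 1)\,\mathcal{A}^{\#}_{u}g(y).
\]
I apply this with $g=\partial^\beta p_u^{w_i}$; by Lemma~\ref{th-ptx2} the hypothesis of Lemma~\ref{l:StGenerEst} holds with $K=c$, $\zeta=|\beta|+d$, and any $\kappa\in[0,\alpha+\gamma-d)$. Choosing $\kappa=\theta$ (admissible because $\theta<\alpha+\gamma-d$) yields
\[
\bigl|(L^{w_2}-L^{w_1})\partial^\beta p_u^{w_i}(y)\bigr|\leq c\,(|w_1-w_2|^\eta\wedge 1)\,u^{-1-|\beta|/\alpha}\,G_u^{(\alpha+\gamma-\theta)}(y),\qquad u>0,\ i=1,2.
\]

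To conclude, write $\partial_x^\beta(p_t^{w_2}-p_t^{w_1})(x)=\int_0^t \partial_s\partial_x^\beta\phi(s)\,ds$ and split the integral at $s=t/2$. On $(0,t/2]$ use the first representation (so $(L^{w_2}-L^{w_1})$ acts on $\partial^\beta p_{t-s}^{w_1}$ and $(t-s)\asymp t$); on $[t/2,t)$ use the second (so it acts on $\partial^\beta p_s^{w_2}$ and $s\asymp t$). Pairing the above pointwise bound with $p_u^{w_i}(y)\leq cG_u^{(\alpha+\gamma)}(y)\leq cG_u^{(\alpha+\gamma-\theta)}(y)$ (Lemma~\ref{th-ptx2}) and applying the sub-convolution inequality of Lemma~\ref{subconvG}---whose exponent hypothesis $\alpha+\gamma-\theta\in(d,d+2)$ is guaranteed by $\theta<\alpha+\gamma-d$ together with $\gamma\leq d$ and $\alpha<2$---the $z$-integration gives
\[
|\partial_s\partial_x^\beta\phi(s)|\leq c\,(|w_1-w_2|^\eta\wedge 1)\,t^{-1-|\beta|/\alpha}\,G_t^{(\alpha+\gamma-\theta)}(x)
\]
uniformly on each half-interval. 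Integrating in $s$ produces the required $t^{-|\beta|/\alpha}$ scaling and delivers \eqref{hol10-eq}.

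The main technical point is justifying termwise differentiation and Fubini throughout, especially after $\partial_x^\beta$ has been inserted into the twin representations for $\partial_s\phi(s)$. This is handled exactly as in the proof of Lemma~\ref{phi_properties}: the derivative bounds of Lemma~\ref{th-ptx2} provide integrable majorants for $\partial^\beta p_u^{w_i}$, and the $\mathcal{A}^{\#}_{u}$-control supplies the domination needed to invoke Lemma~\ref{Lw_symmetry} with the differentiated kernels. Everything else is routine.
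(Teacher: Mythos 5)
Your argument is correct and reaches the conclusion by a genuinely different route from the paper's --- in fact it proves slightly more. The paper keeps the single representation $\partial_s\phi(s)=\int(L^{w_2}-L^{w_1})p_{t-s}^{w_1}(z-x)\,p_s^{w_2}(z)\,dz$ on all of $(0,t)$; since the pointwise bound on $(L^{w_2}-L^{w_1})p_{t-s}^{w_1}$ carries the non-integrable factor $(t-s)^{-1}$, the authors must exploit the cancellation $\int_{\rd}(L^{w_2}-L^{w_1})p_{t-s}^{w_1}(z-x)\,dz=0$ (Corollary~\ref{Lanihil}) together with the Lipschitz bound \eqref{lip-p} on $p_s^{w_2}(z)-p_s^{w_2}(x)$, converting the gained factor $\bigl(|x-z|/s^{1/\alpha}\wedge 1\bigr)$ into $((t-s)/s)^{\theta/\alpha}\bigl(|x-z|/(t-s)^{1/\alpha}\vee 1\bigr)^{\theta}$; this tames the singularity at $s=t$ but costs the exponent $\theta$ in the spatial decay, forces a separate Fourier argument for $|x|\le t^{1/\alpha}$, and requires the factorization $p_t^z=h_t*\tilde h_t^z$ to treat $|\beta|\ge 1$. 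Your device --- splitting at $s=t/2$ and using Lemma~\ref{Lw_symmetry} to place the operator difference on whichever of $p_{t-s}^{w_1}$, $p_s^{w_2}$ has time parameter comparable to $t$ --- removes the singularity altogether, treats all $\beta$ and all $x$ uniformly, and (taking $\kappa=0$ in Lemma~\ref{l:StGenerEst}) even yields the bound with $G_t^{(\alpha+\gamma)}$ in place of $G_t^{(\alpha+\gamma-\theta)}$. The only points to write out carefully are the ones you flag: the continuity of $\partial_x^\beta\phi$ at $s=0$ and $s=t$ (at $s=t$ one should first move $\partial^\beta$ onto $p_s^{w_2}$ by integration by parts, exactly as you do to relocate the operator difference), and the domination needed to differentiate under the integral and to apply Lemma~\ref{Lw_symmetry} to the differentiated kernels; all of this goes through with the majorants of Lemmas~\ref{th-ptx2} and \ref{l:dertxpest}, as in the proof of Lemma~\ref{phi_properties}.
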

\begin{proof} Let us prove first the statement with $\beta=0$.
Since for $a,b>0$ we have $|e^{-a}-e^{-b}|\leq |a-b| e^{-(a\wedge b)}$, by H\"older continuity of $q(z,\xi)$ and  $ q(z,\xi)\approx |\xi|^\alpha$ we get
 \begin{align*}
 \big|p_t^{w_1}(x)-p_t^{w_2}(x)\big|&=(2\pi)^{-d}  \Big| \int_\rd e^{-i\xi x} \Big( e^{-t q(w_1,\xi)}-e^{-t q(w_2,\xi)}\Big) d\xi \Big| \\
 &\leq  c_1   (|w_1-w_2|^\eta\wedge 1)\Big|\int_\rd t |\xi|^\alpha e^{- c t |\xi|^\alpha} d\xi \Big|\\
 &\leq c_2 (|w_1-w_2|^\eta\wedge 1)  t^{-d/\alpha},  \quad t>0, \, w_1,w_2,x\in \rd.
 \end{align*}
Since for $|x|\leq t^{1/\alpha}$ we have $G_t^{(\alpha+\gamma-\theta)}(x)= t^{-d/\alpha}$, we get \eqref{hol10-eq} for such $t$ and $x$.

Suppose now that $|x|\geq t^{1/\alpha}$. We note that $p^z_t \in C^2_b(\Rd)\cap L^1(\Rd)$ for every $t>0$ and $z\in\rd$. Using Lemma \ref{phi_properties}, \eqref{pse2} (which yields integrability of $\partial_s \phi(s)$ on $[\delta,t-\delta]$ for every $\delta\in(0,t/2)$),
Lemma \ref{Lw_symmetry} and the  symmetry of $p_t^{w}(x)$ in $x$ we get
\begin{align*}
   p_t^{w_2}(x)&-p_t^{w_1}(x) = \int_0^t \partial_s \int_\rd p_{t-s}^{w_1}(x-z)p_s^{w_2}(z)\, dzds \\
   & = \int_0^t \int_\rd \big[p_{t-s}^{w_1}(z-x) L^{w_2} p_s^{w_2}(z) - p_s^{w_2}(z)L^{w_1}p_{t-s}^{w_1}(z-x)\big] \, dzds \\
   & = \int_0^t \int_\rd p_s^{w_2}(z) \big[L^{w_2} -L^{w_1}\big] p_{t-s}^{w_1}(z-x) \, dzds \\
   & = \int_0^t \int_\rd \big(p_s^{w_2}(z)-p_s^{w_2}(x) \big)\big[ L^{w_2} -L^{w_1}\big] p_{t-s}^{w_1}(z-x)\, dzds \\
   & \quad + \int_0^t \int_\rd p_s^{w_2}(x)\big[ L^{w_2} -L^{w_1}\big] p_{t-s}^{w_1}(z-x)\, dzds = I_1+I_2.
\end{align*}
From Lemma \ref{l:StGenerEst} and Corollary \ref{Lanihil} we have
$$
\int_\rd \big[ L^{w_2} -L^{w_1}\big] p_{t-s}^{w_1}(z-x) dz=0,
$$
hence, $I_2=0$. We next observe that for all $w,x,y\in \rd$, $t>0$,
\begin{equation}\label{lip-p}
\big|p_t^{w}(x)-p_t^{w}(y)\big|\leq c \left(\frac{|x-y|}{t^{1/\alpha}}\wedge 1\right)\left(G_t^{(\alpha+\gamma)}(x)+ G_t^{(\alpha+\gamma)}(y)\right),
\end{equation}
which follows from the Taylor expansion of $p_t^w(x)$. Indeed, if $|x-y|\geq t^{1/\alpha}$, then \eqref{lip-p} is straightforward: we just estimate the difference of functions by their sum and use Lemma~\ref{th-ptx2}.  If $|x-y|\leq t^{1/\alpha}$, then using the Taylor expansion and Lemma~\ref{th-ptx2} with $|\beta|=1$ we get
 \begin{eqnarray*}
   |p_t^{w}(x)-p_t^{w}(y)|
	 & \leq & |x-y|\cdot  \sup_{\zeta\in [0,1]} | \nabla_x p_t^w (x+\zeta(y-x))| \\
	 & \leq & c_1 |x-y| t^{-1/\alpha} \sup_{\zeta\in [0,1]} G_t^{(\alpha+\gamma)}(x+\zeta(y-x)) \\
	 & \leq & c_2 \frac{|x-y|}{t^{1/\alpha}} \left(G_t^{(\alpha+\gamma)}(x)+G_t^{(\alpha+\gamma)}(y)\right),
 \end{eqnarray*}
since for $|x|\leq 2t^{1/\alpha}$ and every $\zeta\in (0,1)$ we have
$$
G_t^{(\alpha+\gamma)}(x+\zeta(y-x))=t^{-d/\alpha}\Big(\frac{|x+\zeta(y-x)|}{t^{1/\alpha}} \vee 1\Big)^{-\alpha-\gamma} \leq t^{-d/\alpha} \leq 2^{\alpha+\gamma} G_t^{(\alpha+\gamma)}(x),
$$
and for $|x|>2t^{1/\alpha}$ we have $|x+\zeta(y-x)| \geq |x|-|y-x| \geq |x|/2$, which yields
\begin{align*}
G_t^{(\alpha+\gamma)}(x+\zeta(y-x)) &= t^{1-(d-\gamma)/\alpha}|x+\zeta(y-x)|^{-\alpha-\gamma}
\\
 &\leq 2^{\alpha+\gamma} t^{1-(d-\gamma)/\alpha} |x|^{-\alpha-\gamma} = 2^{\alpha+\gamma} G_t^{(\alpha+\gamma)}(x).
 \end{align*}
Further, using \eqref{lip-p},
\textbf{A2}, Lemma~\ref{th-ptx2} and Lemma~\ref{l:StGenerEst} with $\zeta=d$ we get
\begin{align*}
  |I_1|
  & \leq  c_1(|w_1-w_2|^\eta\wedge 1) \int_0^t \int_\rd\Big(\frac{|x-z|}{s^{1/\alpha}}\wedge 1\Big)\Big(G_s^{(\alpha+\gamma)}(x)+ G_s^{(\alpha+\gamma)}(z)\Big)\\
  &     \quad   \cdot (t-s)^{-1} G_{t-s}^{(\alpha+\gamma)}(x-z)\, dzds\\
  & \leq c_2(|w_1-w_2|^\eta\wedge 1)\int_0^t s^{-\theta/\alpha} (t-s)^{-1+\theta/\alpha}
	       \\
  &\quad \cdot \int_\rd G_{t-s}^{(\alpha+\gamma-\theta)}(x-z)\Big(G_s^{(\alpha+\gamma)}(x)+ G_s^{(\alpha+\gamma)}(z)\Big) dzds,
\end{align*}
where in the second inequality above we use the fact that
$$
  \Big(\frac{|x-z|}{s^{1/\alpha}}\wedge 1\Big) \leq \Big(\frac{t-s}{s}\Big)^{\theta/\alpha} \Big(\frac{|x-z|}{(t-s)^{1/\alpha}}\vee 1\Big)^{\theta},
	\quad x,z\in\rd,\ t>s>0,\ \theta\in (0,1].
$$
By Lemma \ref{subconvG} we obtain
$$
  |I_1| \leq c_3(|w_1-w_2|^\eta\wedge 1) \int_0^t s^{-\theta/\alpha} (t-s)^{-1+\theta/\alpha}
	\Big( G_s^{(\alpha+\gamma)}(x)+  G_t^{(\alpha+\gamma-\theta)}(x)\Big)\, ds.
$$
Note that for  $|x|^\alpha\geq t\geq s$  we have
$G_s^{(\alpha+\gamma)}(x)= s^{(\alpha+\gamma-d)/\alpha}/|x|^{\alpha+\gamma}$. Therefore,
\begin{align*}
  \int_0^t s^{-\theta/\alpha} &(t-s)^{-1+\theta/\alpha}
\Big( G_s^{(\alpha+\gamma)}(x)+  G_t^{(\alpha+\gamma-\theta)}(x)\Big)\, ds \\
 & =  B(\tfrac{2\alpha+\gamma-d-\theta}{\alpha},\tfrac{\theta}{\alpha})
	  \frac{t^{(\alpha+\gamma-d)/\alpha}}{|x|^{\alpha+\gamma}}+
		\frac{\pi}{\sin(\pi\theta/\alpha)}G_t^{(\alpha+\gamma-\theta)}(x) \\
 & \leq  c \Big(G_t^{(\alpha+\gamma)}(x) + G_t^{(\alpha+\gamma-\theta)}(x)\Big) \leq\, 2c\, G_t^{(\alpha+\gamma-\theta)}(x).
\end{align*}
Thus we have \eqref{hol10-eq} also for $|x|\geq t^{1/\alpha}$.

To prove the statement for $|\beta|\geq 1$, denote by  $h_t(x)$  the convolution semigroup corresponding to the L\'evy measure $(2M_0)^{-1}\nu_0(du)$ and denote by $\tilde{h}_t^{z} (x)$ the convolution semigroup with the L\'evy measure
$\nu^\#(z,du)=\nu(z,du)-(2 M_0)^{-1}\nu_0(du)$. We note that $p_t^{z}(x) = h_t * \tilde{h}_t^{z} (x)$ and $\nu^\#$ satisfies \textbf{A2} with constant $2M_0$ instead of $M_0$ and therefore \eqref{hol10-eq}
holds also for $\tilde{h}_t^{z} (x)$ with $\beta=0$ (and perhaps different constant $c$).
Hence for $\beta\in \N_0^d$, using Lemma \ref{th-ptx2} for $h_t$ and Lemma~\ref{subconvG} we get 
\begin{align*}
  \big| \partial^\beta_x  p_t^{w_1}(x)
	& - \partial^\beta_x  p_t^{w_2}(x)\big|
	  = \big| \partial^\beta_x \int_\rd h_t(x-y)\big( \tilde{h}_t^{w_1}(y)-\tilde{h}_t^{w_2}(y)\big)\, dy\big| \\
  & = \big| \int_\rd \partial^\beta_x  h_t(x-y)\big( \tilde{h}_t^{w_1}(y)-\tilde{h}_t^{w_2}(y)\big)\, dy\big| \\
  & \leq c_1 (|w_1-w_2|^\eta \wedge 1)   t^{-|\beta|/\alpha}  \int_\rd G_t^{(\alpha+\gamma)}(x-y)G_t^{(\alpha+\gamma-\theta)}(y)\, dy \\
  & \leq c_2 (|w_1-w_2|^\eta \wedge 1) t^{-|\beta|/\alpha} G_t^{(\alpha+\gamma-\theta)}(x).
\end{align*}
This finishes the proof.
\end{proof}
Here is a similar continuity property.
\begin{lemma}\label{pty_cont}
  For all $t>0$ and $y\in\Rd$ we have
	\begin{equation}\label{e_pty_cont}
		\lim_{z\to y} \sup_{x\in\Rd} |p_t^z (z-x) - p_t^y(y-x)| = 0.
	\end{equation}
\end{lemma}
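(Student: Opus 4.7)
The plan is to split the difference into two pieces that separately go to zero uniformly in $x$, using the two regularity results already established for $p_t^w$: H\"older continuity in the parameter $w$ (Lemma~\ref{hol10}) and Lipschitz-type continuity in the spatial variable (estimate \eqref{lip-p} inside the proof of Lemma~\ref{hol10}). Concretely, I would write
\begin{equation*}
p_t^z(z-x) - p_t^y(y-x) = \bigl[p_t^z(z-x) - p_t^y(z-x)\bigr] + \bigl[p_t^y(z-x) - p_t^y(y-x)\bigr],
\end{equation*}
and treat the two bracketed terms separately.

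For the first term, Lemma~\ref{hol10} with $\beta=0$ gives, for some $\theta \in (0, \eta \wedge (\alpha+\gamma-d))$,
\begin{equation*}
\bigl|p_t^z(z-x) - p_t^y(z-x)\bigr| \leq c\,(|z-y|^\eta \wedge 1)\, G_t^{(\alpha+\gamma-\theta)}(z-x).
\end{equation*}
Since $G_t^{(\beta)}$ attains its maximum $t^{-d/\alpha}$ at the origin, taking $\sup_x$ yields a bound $c\,(|z-y|^\eta \wedge 1)\, t^{-d/\alpha}$, which tends to $0$ as $z \to y$ for the fixed $t>0$.

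For the second term, I would apply \eqref{lip-p} (with the same $w = y$ but different spatial arguments $z-x$ and $y-x$) to get
\begin{equation*}
\bigl|p_t^y(z-x) - p_t^y(y-x)\bigr| \leq c\Bigl(\frac{|z-y|}{t^{1/\alpha}} \wedge 1\Bigr)\Bigl(G_t^{(\alpha+\gamma)}(z-x) + G_t^{(\alpha+\gamma)}(y-x)\Bigr).
\end{equation*}
Again $G_t^{(\alpha+\gamma)}$ is bounded by $t^{-d/\alpha}$, so $\sup_x$ of the right-hand side is at most $2c\,(|z-y|/t^{1/\alpha} \wedge 1)\, t^{-d/\alpha}$, which also tends to $0$ as $z \to y$.

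Combining these two estimates proves \eqref{e_pty_cont}. There is essentially no obstacle here: the statement is a direct corollary of the H\"older estimate in the parameter (Lemma~\ref{hol10}) and the spatial continuity estimate \eqref{lip-p}, both already in hand. The only small bookkeeping point is that one uses the trivial bound $G_t^{(\beta)}(\cdot) \leq t^{-d/\alpha}$ to decouple the suprema from the space variable $x$.
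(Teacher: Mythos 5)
Your proof is correct and follows exactly the paper's argument: the same splitting into $[p_t^z(z-x)-p_t^y(z-x)]+[p_t^y(z-x)-p_t^y(y-x)]$, the same use of Lemma~\ref{hol10} and \eqref{lip-p}, and the same bound $G_t^{(\beta)}\leq t^{-d/\alpha}$ to obtain uniformity in $x$. Nothing to add.
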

\begin{proof}
  From Lemma \ref{hol10} and \eqref{lip-p},
we get
  \begin{eqnarray*}
	  |p_t^z (z-x) - p_t^y(y-x)|
		& \leq &  |p_t^z (z-x) - p_t^y(z-x)| + |p_t^y (z-x) - p_t^y(y-x)| \\
		& \leq & c_1 (|z-y|^\eta\wedge 1) G_t^{(\alpha+\gamma-\theta)}(z-x) \\
		&       +&  c_2 \left(\frac{|z-y|}{t^{1/\alpha}}\wedge 1\right)
		         \left(G_t^{(\alpha+\gamma)}(z-x)+ G_t^{(\alpha+\gamma)}(y-x)\right) \\
		& \leq & c_1 t^{-d/\alpha} (|z-y|^\eta\wedge 1) + 2 c_2 \left(\frac{|z-y|}{t^{1/\alpha}}\wedge 1\right) t^{-d/\alpha},
	\end{eqnarray*}
	and \eqref{e_pty_cont} follows.
\end{proof}

Lemma~\ref{hol10} also yields Lemma~\ref{delta} and \ref{derp} below. 
We have the following  result on strong continuity of $p_t^0(x,y)= p_t^y(y-x)$.
\begin{lemma}\label{delta}
For every $f\in C_0(\rd)$,
$
  \lim_{t\to 0} \sup_x \Big| \int_\rd p^y_t(y-x)f(y)\, dy - f(x)\Big|=0.
$
\end{lemma}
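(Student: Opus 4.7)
My plan is to decompose
\[
\int_\rd p_t^y(y-x) f(y)\, dy - f(x) = A(t,x) + B(t,x),
\]
where, after the substitution $u = y - x$,
\[
A(t,x) = \int_\rd p_t^{x+u}(u) \big[f(x+u) - f(x)\big]\, du,
\]
\[
B(t,x) = f(x) \int_\rd \big[p_t^{x+u}(u) - p_t^x(u)\big]\, du,
\]
the second identity using that for each fixed $x$ the function $u \mapsto p_t^x(u)$ is a probability density, so $\int p_t^x(u)\, du = 1$.

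For the $A$ term I would apply Lemma~\ref{th-ptx2} with $|\beta|=0$ to dominate $p_t^{x+u}(u) \leq c\, G_t^{(\alpha+\gamma)}(u)$ uniformly in $x$. Since $f \in C_0(\rd)$ is uniformly continuous on $\rd$, for any $\epsilon>0$ there is $\delta>0$ with $|f(x+u)-f(x)|<\epsilon$ whenever $|u|<\delta$, independently of $x$. Splitting at $|u|=\delta$,
\[
\sup_x |A(t,x)| \leq c\epsilon \int_\rd G_t^{(\alpha+\gamma)}(u)\, du + 2c\|f\|_\infty \int_{|u|\geq\delta} G_t^{(\alpha+\gamma)}(u)\, du.
\]
The first integral is bounded by a constant independent of $t$ by \eqref{g10b}; after the rescaling $v = u/t^{1/\alpha}$ the second integral becomes $\int_{|v|\geq \delta t^{-1/\alpha}} G^{(\alpha+\gamma)}(v)\, dv$ and tends to $0$ as $t\to 0$. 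Sending $\epsilon\to 0$ yields $\lim_{t\to 0}\sup_x |A(t,x)| = 0$.

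For the $B$ term, which quantifies the failure of $y\mapsto p_t^y(y-x)$ to have total mass $1$, I would invoke Lemma~\ref{hol10} with $\beta=0$, $w_1 = x+u$, $w_2 = x$ to get
\[
|p_t^{x+u}(u) - p_t^x(u)| \leq c\, (|u|^\eta \wedge 1)\, G_t^{(\alpha+\gamma-\theta)}(u),
\]
for any $\theta \in (0, \eta \wedge (\alpha+\gamma-d))$, with $c$ independent of $x$. Rescaling $v = u/t^{1/\alpha}$,
\[
\sup_x |B(t,x)| \leq c\|f\|_\infty \int_\rd \big(t^{\eta/\alpha}|v|^\eta \wedge 1\big) G^{(\alpha+\gamma-\theta)}(v)\, dv,
\]
which tends to $0$ as $t\to 0$ by dominated convergence, the integrable majorant $G^{(\alpha+\gamma-\theta)}$ being available precisely because $\alpha+\gamma-\theta>d$.

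The main subtlety is that $y\mapsto p_t^y(y-x)$ is not a genuine probability density in $y$, so $\int p_t^y(y-x)\,dy$ need not equal $1$ and the ``missing mass'' piece $B(t,x)$ has to be controlled separately. That control rests crucially on the spatial H\"older regularity of $z\mapsto p_t^z$ furnished by Lemma~\ref{hol10}, together with the standing assumption $\alpha+\gamma>d$. Once $B$ is handled, the $A$ term is a routine approximation-of-identity argument built on the integrable upper bound from Lemma~\ref{th-ptx2}.
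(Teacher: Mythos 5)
Your proof is correct and follows essentially the same route as the paper's: the main term is handled by the standard approximation-of-identity argument resting on the uniform bound $p_t^z(u)\le c\,G_t^{(\alpha+\gamma)}(u)$ from Lemma~\ref{th-ptx2}, and the error caused by the $y$-dependence of the frozen kernel is controlled by the H\"older estimate of Lemma~\ref{hol10}. The only cosmetic differences are that you factor out $f(x)$ using $\int_\rd p_t^x(u)\,du=1$ and pass to the limit in the error term by dominated convergence after rescaling, whereas the paper keeps $|f(y)|$ inside that integral and splits it at radius $t^{\epsilon/\alpha}$ for a suitable $\epsilon\in(d/(d+\eta),1)$; both arguments rest on the same estimates and give the same conclusion.
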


\begin{proof} We have
\begin{align*}
 & \Big|\int_\rd p_t^y(y-x)f(y)\,
	dy - f(x)\Big|  \\
	& \leq \Big|\int_\rd p_t^x(y-x)f(y)\, dy- f(x)\Big| + \int_\rd |p_t^y(y-x)-p_t^x(y-x)||f(y)|\, dy \\
	& \leq   \int_\rd |f(y)- f(x) | p_t^x(y-x) \, dy + \int_\rd |p_t^y(y-x)-p_t^x(y-x)||f(y)|\, dy \\
  &  =    I_1(t)+ I_2(t).
\end{align*}
Let $\delta>0$. Using Lemma \ref{th-ptx2}
for every $t\in (0,\delta^{\alpha})$ we obtain
\begin{eqnarray*}
  I_1(t)
	& \leq  & c_1 \int_\rd |f(y) - f(x)| G_t^{(\alpha+\gamma)} (y-x)\, dy =    c_1 \int_{|y-x|\leq t^{1/\alpha}} |f(y)-f(x)| t^{-d/\alpha}\, dy\\
	         & &+ c_1 \int_{t^{1/\alpha} < |y-x|\leq \delta} |f(y)-f(x)| t^{1-(d-\gamma)/\alpha}|y-x|^{-\alpha-\gamma}\, dy \\
	&       & + c_1 \int_{|y-x| > \delta} |f(y)-f(x)| t^{1-(d-\gamma)/\alpha}|y-x|^{-\alpha-\gamma}\, dy \\
	& \leq  & c_2 \sup_{|y-x|\leq \delta} |f(y)-f(x)| + c_3 \|f\|_\infty t^{1-(d-\gamma)/\alpha}\delta^{-\alpha-\gamma+d}.
\end{eqnarray*}
  Taking $\delta>0$ such that $|f(y)-f(x)|\leq \varepsilon/(2c_2)$ for $|y-x|\leq\delta$, and
	$t_0$ such that
$$
c_3 \|f\|_\infty t^{1-(d-\gamma)/\alpha}\delta^{-\alpha-\gamma+d} \leq \varepsilon/2 \quad \text{ for $t\in (0,t_0)$},
$$
	we get $\sup_{x\in\rd} I_1(t,x) \leq \varepsilon$, hence $\sup_{x\in\rd} I_1(t,x)\to 0$, as $t\to 0$.
To estimate $I_2(t,x)$ we take $\epsilon\in (\frac{d}{d+\eta},1)$ and by Lemma \ref{hol10} for every $\theta\in(0,\eta\wedge\alpha\wedge(\alpha+\gamma-d))$ we get
 \begin{align}\label{cpp}
   I_2(t)
	& = \left(\int_{|y-x|\leq t^{\epsilon/\alpha} }
	    + \int_{|y-x|> t^{\epsilon/\alpha}  }\right)
        |p_t^y(y-x)-p_t^x(y-x)| |f(y)|\, dy \\ \nonumber
  & \leq c_1 \|f\|_\infty t^{\epsilon(d+\eta)/\alpha-d/\alpha} + c_1 \|f\|_\infty \int_{|y-x|> t^{\epsilon/\alpha}}
	        G_t^{(\alpha+\gamma-\theta)}(y-x)\, dy \\ \nonumber
	       & = c_1 \|f\|_\infty t^{\epsilon(d+\eta)/\alpha-d/\alpha} + c_1 \|f\|_\infty \int_{|z|>t^{(\epsilon-1)/\alpha}}
	       (|z|\vee 1)^{-\gamma-\alpha+\theta}\, dz.
 \end{align}
 By our choice of $\epsilon$, both terms tend to 0 as $t\to 0$.
\end{proof}
We now point out the impact of cancellations, cf. Lemma~\ref{l:dertxpest}.
\begin{lemma}\label{derp} For every $\theta\in(0,\eta\wedge\frac{\alpha+\gamma-d}{2})$ we have
\begin{equation}\label{derpeq}
  \Big| \int_\rd \partial_t p_t^y(y-x)dy\Big|\leq c t^{-1+\theta/\alpha},\quad t>0.
\end{equation}
\end{lemma}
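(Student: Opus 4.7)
The plan is to exploit the cancellation $\int_\rd L^x p_t^x(z)\,dz = 0$ provided by Corollary~\ref{Lanihil} together with a H\"older estimate in the drift parameter for $\partial_t p_t^w$. After the substitution $y = z+x$, the integral becomes $\int_\rd \partial_t p_t^{z+x}(z)\,dz$. Since $\partial_t p_t^x = L^x p_t^x$ by \eqref{Couchy_for_Levy}, $p_t^x \in C^2_b(\rd)\cap L^1(\rd)$, and $L^x p_t^x \in L^1(\rd)$ by Lemma~\ref{l:dertxpest} (using $\alpha+\gamma>d$), Corollary~\ref{Lanihil} yields $\int_\rd \partial_t p_t^x(z)\,dz = 0$. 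Consequently,
\begin{equation*}
\int_\rd \partial_t p_t^y(y-x)\,dy \;=\; \int_\rd \bigl(\partial_t p_t^{z+x}(z) - \partial_t p_t^x(z)\bigr)\,dz.
\end{equation*}

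The heart of the argument is then the H\"older-type estimate
\begin{equation*}
\bigl|\partial_t p_t^{w_1}(z) - \partial_t p_t^{w_2}(z)\bigr| \;\leq\; c\,(|w_1-w_2|^\eta\wedge 1)\,t^{-1}\,G_t^{(\alpha+\gamma-\theta)}(z).
\end{equation*}
To prove it, I would write $\partial_t p_t^{w} = L^{w} p_t^{w}$ and use the decomposition
\begin{equation*}
L^{w_1}p_t^{w_1} - L^{w_2}p_t^{w_2} \;=\; L^{w_1}\bigl(p_t^{w_1}-p_t^{w_2}\bigr) + \bigl(L^{w_1}-L^{w_2}\bigr)p_t^{w_2}.
\end{equation*}
For the first summand, Lemma~\ref{hol10} provides the derivative bounds on $f_t:=p_t^{w_1}-p_t^{w_2}$ required to apply Lemma~\ref{l:StGenerEst} with $\zeta=d$ and $\kappa=\theta$, and $\nu(w_1,\cdot)\leq M_0\nu_0$ from \textbf{A2} then yields the asserted bound. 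For the second summand, \textbf{A2} gives $|\nu(w_1,\cdot)-\nu(w_2,\cdot)|\leq M_0(|w_1-w_2|^\eta\wedge 1)\nu_0$, so working with the symmetric form of $L^w$ from \eqref{eq:LC2} and combining Lemma~\ref{th-ptx2} with Lemma~\ref{l:StGenerEst} (with $\kappa=0$) produces a bound of the form $c(|w_1-w_2|^\eta\wedge 1)\,t^{-1}\,G_t^{(\alpha+\gamma)}(z)$, which is dominated by the required expression.

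With this H\"older estimate specialized to $w_1=z+x$, $w_2=x$, the task reduces to showing
\begin{equation*}
\int_\rd (|z|^\eta\wedge 1)\,G_t^{(\alpha+\gamma-\theta)}(z)\,dz \;\leq\; c\,t^{\theta/\alpha}, \quad t>0,
\end{equation*}
after which multiplication by $t^{-1}$ delivers \eqref{derpeq}. This last bound follows by splitting the integration domain along $\{|z|\leq 1\}$ versus $\{|z|>1\}$ (and further by $|z|\lessgtr t^{1/\alpha}$) and a direct calculation from \eqref{g10}: the restriction $\theta\leq\eta$ controls the near-origin portion, while the constraint $\theta<(\alpha+\gamma-d)/2$ is precisely what is needed for the tail, where one checks $\int_{|z|>1}G_t^{(\alpha+\gamma-\theta)}(z)\,dz \asymp t^{1-(d-\gamma+\theta)/\alpha}\leq c\,t^{\theta/\alpha}$ for small $t$. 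The principal difficulty is assembling the H\"older estimate for $\partial_t p_t^w$; once in hand, the remaining integration is routine.
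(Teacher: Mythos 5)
Your proposal is correct and follows essentially the same route as the paper: write $\partial_t p_t^w = L^w p_t^w$, use Corollary~\ref{Lanihil} to kill the term frozen at $x$, split the remainder into $L^{w_1}(p_t^{w_1}-p_t^{w_2})$ and $(L^{w_1}-L^{w_2})p_t^{w_2}$, and bound these via Lemma~\ref{hol10}, \textbf{A2}, Lemma~\ref{th-ptx2} and Lemma~\ref{l:StGenerEst} before integrating, with $\theta<\eta\wedge\frac{\alpha+\gamma-d}{2}$ entering exactly where you say. The only cosmetic difference is that you package the two bounds as a standalone pointwise H\"older estimate for $w\mapsto\partial_t p_t^w$ and then specialize, while the paper estimates the two integrals directly.
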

\begin{proof}
Using the fact that $\partial_t p_t^z (x)= L^z p_t^z(x)$ we get
\begin{align*}
  \int_\rd \partial_t p_t^z(z-x)dz&=  \int_\rd L^z p_t^z(z-x)\, dz
   =  \int_\rd L^z \big( p_t^z(z-x)-p_t^x(z-x)\big)\, dz\\
   &+ \int_\rd \big(L^z -L^x\big) p_t^x(z-x)\, dz  \quad +\int_\rd L^x p_t^x(z-x)\, dz.
\end{align*}
Lemma \ref{Lanihil} yields $\int_\rd L^x p_t^x(z-x)\, dz=0$.  Further, by Lemma~\ref{hol10}  and \ref{l:StGenerEst} we get
\begin{align*}
  \int_\rd \big| L^z (p_t^z(z-x)- p_t^x(z-x))\big|dz
	&\leq c_1\int_\rd (|x-z|^\eta\wedge 1)t^{-1} G_t^{(\alpha+\gamma-\theta)}(z-x)\, dz \\
  &   =  c_1 t^{-1-d/\alpha} \int_\rd (|y|^\eta \wedge 1) (1\vee (t^{-1/\alpha}|y|))^{-\gamma-\alpha+\theta}\, dy \\
  & \leq c_2 t^{-1+(\eta \wedge (\alpha+\gamma-\theta-d))/\alpha} \leq c_2 t^{-1+\theta/\alpha}.
\end{align*}
Similarly, by  \textbf{A2}, Lemma~\ref{th-ptx2} and Lemma~\ref{l:StGenerEst} we get
\begin{align*}
  \int_\rd \big|\big(L^z -L^x\big) p_t^x(z-x)\big| \, dz
	& \leq c_1 t^{-1-d/\alpha} \int_\rd (|z-x|^{\eta} \wedge 1)  G_t^{(\alpha+\gamma)}(z-x) \, dz \\
	& \leq c_2 t^{-1+(\eta \wedge (\alpha+\gamma-d))/\alpha} \leq c_2 t^{-1+\theta/\alpha},
\end{align*}
which finishes the proof of the lemma.
\end{proof}

\section{Parametrix
}\label{para}

\subsection{Proof of convergence}
In this section we prove that $p_t$ given by \eqref{r} is well defined.
To this end we let
\begin{equation}\label{Psi2}
\Psi^\#_t(x,y):=\sum_{k=1}^\infty  |\Phi|_t^{\boxtimes k}(x,y),
\end{equation}
and
\begin{equation}\label{pmod}
p^\#_t(x,y)= p^0_t(x,y)+  \big(p^0\boxtimes \Psi^\# \big)_t(x,y).
\end{equation}
Our first result shows that the series \eqref{Psi2} and the function $p_t^\#(x,y)$ are finite,  and have nice estimates. Then $p_t(x,y)$ is well defined, with the same upper bounds.

\begin{proposition}\label{pr1}
The series \eqref{Psi2} is convergent, the integral $p^0\boxtimes \Psi^\#$  exists, and
 \begin{equation}\label{up10}
 p^\#_t(x,y)\leq C e^{ct} G_t^{(\alpha+\gamma)}(y-x), \quad t>0,\, x,y\in \rd.
 \end{equation}
\end{proposition}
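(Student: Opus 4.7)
The plan is to bound $\Phi_t(x,y)$ pointwise, iterate via Lemma~\ref{subconvG} to control $\Psi^\#$, and finally carry out the convolution with $p^0$ while keeping the sharp $G_t^{(\alpha+\gamma)}$ decay demanded in \eqref{up10}.

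First I would simplify $\Phi_t(x,y)$. By \eqref{Couchy_for_Levy} the time derivative rewrites as $\partial_t p_t^y(y-x) = (L^y p_t^y)(y-x)$, and a change of variables together with the symmetry of $\nu(x,\cdot)$ (cf.\ the manipulation leading to \eqref{eq:LC2}) identifies $L_x p_t^0(x,y) = (L^x p_t^y)(y-x)$, so
\[
\Phi_t(x,y) \;=\; \bigl((L^x - L^y)\, p_t^y\bigr)(y-x).
\]
Now I would apply assumption \textbf{A2} to $\nu(x,\cdot) - \nu(y,\cdot)$, switch the L\'evy truncation from $|v|\le 1$ to the scale-adapted $|v|\le t^{1/\alpha}$ (permissible by symmetry of $\nu$), and invoke Lemma~\ref{l:StGenerEst} with $\zeta=d,\kappa=0$ together with the derivative bounds of Lemma~\ref{th-ptx2} to obtain
\[
|\Phi_t(x,y)| \;\le\; C\,(|x-y|^\eta \wedge 1)\,t^{-1}\,G_t^{(\alpha+\gamma)}(y-x).
\]
Fixing $\theta\in(0,\eta\wedge(\alpha+\gamma-d))$ and using the elementary inequality $|x-y|^\eta\wedge 1 \le C\,t^{\theta/\alpha}(1+t^{-1/\alpha}|x-y|)^\theta$, this converts to $|\Phi_t(x,y)| \le C'\,t^{-1+\theta/\alpha}\,G_t^{(\alpha+\gamma-\theta)}(y-x)$.

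The choice of $\theta$ guarantees $\alpha+\gamma-\theta\in(d,d+2)$, so Lemma~\ref{subconvG} applies. Induction on $k$ combined with the Beta identity $\int_0^t (t-s)^{a-1}s^{b-1}\,ds = B(a,b)\,t^{a+b-1}$ would then yield
\[
|\Phi|_t^{\boxtimes k}(x,y) \;\le\; \frac{(C_1\Gamma(\theta/\alpha))^k}{\Gamma(k\theta/\alpha)}\,t^{-1+k\theta/\alpha}\,G_t^{(\alpha+\gamma-\theta)}(y-x),
\]
and Stirling's formula shows that the sum over $k$ converges, giving $\Psi^\#_t(x,y) \le C_2\,e^{c_2 t}\,t^{-1+\theta/\alpha}\,G_t^{(\alpha+\gamma-\theta)}(y-x)$. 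This proves convergence of \eqref{Psi2} and existence of $p^0\boxtimes\Psi^\#$.

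For the sharp bound \eqref{up10}, I would combine $p^0_t\le C G_t^{(\alpha+\gamma)}$ from Lemma~\ref{th-ptx2} with the above bound on $\Psi^\#$ in $(p^0\boxtimes\Psi^\#)_t(x,y) = \int_0^t \int_{\Rd} p^0_{t-s}(x,z)\Psi^\#_s(z,y)\,dz\,ds$, and split the $s$-integration at $t/2$. For $s\le t/2$ the factor $G_s^{(\alpha+\gamma-\theta)}(y-\cdot)$ is concentrated at scale $s^{1/\alpha}\ll t^{1/\alpha}$ while $G_{t-s}^{(\alpha+\gamma)}(\cdot-x)$ varies on scale $(t-s)^{1/\alpha}\approx t^{1/\alpha}$, so the inner integral should be dominated by the value of $G_{t-s}^{(\alpha+\gamma)}$ at $y-x$, with the remaining time integral $\int_0^{t/2} s^{-1+\theta/\alpha}e^{c_2 s}\,ds$ yielding a factor absorbed into $e^{ct}$; the symmetric decomposition handles $s\ge t/2$. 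The main obstacle is exactly this final step: the naive sub-convolution of $G^{(\alpha+\gamma)}$ with $G^{(\alpha+\gamma-\theta)}$ only propagates the weaker tail, so a careful space/time-splitting argument exploiting the differing concentration scales of the two factors is needed to restore $G_t^{(\alpha+\gamma)}$ rather than $G_t^{(\alpha+\gamma-\theta)}$, and this is the technical heart of the estimate.
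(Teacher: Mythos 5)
Your opening steps are sound and match the paper: the identification $\Phi_t(x,y)=\bigl((L^x-L^y)p_t^y\bigr)(y-x)$ and the bound $|\Phi_t(x,y)|\le C(1\wedge|x-y|^\eta)\,t^{-1}G_t^{(\alpha+\gamma)}(y-x)$ via \textbf{A2}, Lemma~\ref{th-ptx2} and Lemma~\ref{l:StGenerEst} is exactly Lemma~\ref{Phi-up}, and the Beta-function/Stirling iteration that produces the factorial convergence is exactly the mechanism of Lemma~\ref{Phi-l20}. Where you diverge from the paper, and where a genuine gap appears, is in the choice of intermediate majorant: you pass from $(1\wedge|x-y|^\eta)\,t^{-1}G_t^{(\alpha+\gamma)}$ to $t^{-1+\theta/\alpha}G_t^{(\alpha+\gamma-\theta)}$ and then iterate with the sub-convolution property of $G^{(\alpha+\gamma-\theta)}$ (Lemma~\ref{subconvG}). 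This discards information that cannot be recovered afterwards. Concretely, for $|y-x|=r\ge 1\vee t^{1/\alpha}$ your iterated bound gives
$\Psi^\#_s(z,y)\lesssim s^{-1+\theta/\alpha}e^{cs}G_s^{(\alpha+\gamma-\theta)}(y-z)$, and the off-diagonal contribution to $(p^0\boxtimes\Psi^\#)_t$ over $|z-x|<r/2$ (so $|y-z|\ge r/2$) is
\[
\int_0^t s^{-1+\theta/\alpha}\,s^{1+(\gamma-\theta-d)/\alpha}\,r^{-\alpha-\gamma+\theta}
\int_{\Rd} G_{t-s}^{(\alpha+\gamma)}(z-x)\,dz\,ds
\;\approx\; t^{1+(\gamma-d)/\alpha}\,r^{-\alpha-\gamma+\theta},
\]
which exceeds the target $G_t^{(\alpha+\gamma)}(y-x)\approx t^{1+(\gamma-d)/\alpha}r^{-\alpha-\gamma}$ by the factor $r^\theta\to\infty$. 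No time/space splitting of the final convolution can repair this because the loss is already present in the bound on $\Psi^\#$: for $r>1$, $G_t^{(\alpha+\gamma-\theta)}(r)\approx r^{\theta}\,G_t^{(\alpha+\gamma)}(r)$. You correctly sense that this last step is ``the technical heart,'' but with $G^{(\alpha+\gamma-\theta)}$ as the iteration kernel it is not a technical hurdle to be overcome; the estimate is simply lost.

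The paper's device is to iterate with the more refined kernel $H_t^{(\gamma,\theta)}$ of \eqref{H10} rather than with $G^{(\alpha+\gamma-\theta)}$. The inequality \eqref{GH} shows that $t^{-\theta/\alpha}(1\wedge|x|^\theta)G_t^{(\alpha+\gamma)}(x)\le H_t^{(\gamma,\theta)}(x)$, so $H$ dominates $\Phi$ just as well, while Proposition~\ref{sub-conv} gives $H$ the sub-convolution property. The crucial extra feature is that for $|x|\ge 1\vee t^{1/\alpha}$ one has $H_t^{(\gamma,\theta)}(x)=t^{-\theta/\alpha}G_t^{(\alpha+\gamma)}(x)$ --- the $\theta$-loss only affects the intermediate region $t^{1/\alpha}\le|x|<1$ --- and in fact $t^{\theta/\alpha}H_t^{(\gamma,\theta)}(x)\le G_t^{(\alpha+\gamma)}(x)$ for all $x$ and $t$. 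This is precisely what closes the loop in the paper: after iterating with $H$ and performing the $s$-integral one collects the factor $t^{\theta/\alpha}$ and converts back to $G_t^{(\alpha+\gamma)}$ without the spurious $r^\theta$. Replacing $H$ by $G^{(\alpha+\gamma-\theta)}$ forgets exactly this structure. To repair your argument you would need to re-run the induction of Lemma~\ref{Phi-l20} with $H_t^{(\gamma,\theta)}$ in place of $G_t^{(\alpha+\gamma-\theta)}$, using Proposition~\ref{sub-conv} instead of Lemma~\ref{subconvG}, and then use $t^{\theta/\alpha}H_t^{(\gamma,\theta)}\le G_t^{(\alpha+\gamma)}$ in the final step.
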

The result depends on  auxiliary estimates of $|\Phi_t(x,y)|$ and its convolutions, which we now give. The proof of Proposition~\ref{pr1} is found at the end of the next subsection.

\begin{lemma}\label{Phi-up}  Under condition \textbf{A2} there exists $C_\Phi>0$ such that
  \begin{equation}\label{Phi10}
	  \big|\Phi_t(x,y)\big| \leq  C_\Phi t^{-1} (1\wedge |y-x|^\eta) G_t^{(\alpha+\gamma)}(y-x), \quad x,y\in\Rd,\, t>0,
  \end{equation}
	the function $\partial_t \Phi_t(x,y)$ exists for all $t>0,\, x,y\in\Rd,$ is continuous in  $t$, and
	\begin{equation}\label{Phi101}
	  \big|\partial_t \Phi_t(x,y)\big| \leq  C_\Phi t^{-2} (1\wedge |y-x|^\eta) G_t^{(\alpha+\gamma)}(y-x), \quad x,y\in\Rd,\, t>0.
  \end{equation}
\end{lemma}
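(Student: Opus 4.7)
The key observation is that $\Phi_t(x,y)$ can be rewritten as a difference of generators acting on the same function. Indeed, since $(p_t^y)_{t>0}$ is a convolution semigroup with generator $L^y$ on $C_0^2(\Rd)$, the Kolmogorov equation \eqref{Couchy_for_Levy} gives
\begin{equation*}
  \partial_t p_t^0(x,y) \;=\; \partial_t p_t^y(y-x) \;=\; (L^y p_t^y)(y-x) \;=\; L^y[p_t^y(y-\cdot)](x),
\end{equation*}
where in the last step we used the translation invariance of $L^y$ together with the symmetry of $\nu(y,du)$, which turns the change of variable $u\to -u$ into an identity. Since $L_x p_t^0(x,y)=L^x[p_t^y(y-\cdot)](x)$ by definition, we obtain the crucial representation
\begin{equation*}
  \Phi_t(x,y) \;=\; (L^x-L^y)[p_t^y(y-\cdot)](x).
\end{equation*}

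By \textbf{A2} (in the form $\nu(z,du)=h(z,u)\nu_0(du)$ with $\eta$-H\"older $h$ and $\|h\|_\infty\le M_0$), for every symmetric test function $g$
\begin{equation*}
  \Bigl|\tfrac{1}{2}\!\int\![g(x_0+u)+g(x_0-u)-2g(x_0)]\,[\nu(x,du)-\nu(y,du)]\Bigr|
  \le M_0(|x-y|^\eta\wedge 1)\,S g(x_0),
\end{equation*}
where $Sg(x_0):=\tfrac{1}{2}\int|g(x_0+u)+g(x_0-u)-2g(x_0)|\,\nu_0(du)$. Using the symmetry of $\nu_0$, $Sg(x_0)$ is bounded by $\mathcal{A}^\#_tg(x_0)+\mathcal{A}^\#_tg(x_0-\cdot{\mapsto}\,)$-type quantities, and in either case by (twice) the functional $\mathcal{A}^\#_t$ of Lemma~\ref{l:StGenerEst} applied to $z\mapsto p_t^y(z)$ at $z_0=y-x$. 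By Lemma~\ref{th-ptx2} with $|\beta|=0,2$, this function satisfies \eqref{eq:avfest} with $\zeta=d$ and $\kappa=0$ (permitted since $\alpha+\gamma>d$ by \textbf{A1}). Lemma~\ref{l:StGenerEst} then yields
\begin{equation*}
  \mathcal{A}^\#_t[p_t^y(\cdot)](y-x) \;\le\; c\,t^{-1-d/\alpha}\bigl(1+t^{-1/\alpha}|y-x|\bigr)^{-\alpha-\gamma}\;\asymp\;c\,t^{-1}G_t^{(\alpha+\gamma)}(y-x).
\end{equation*}
Combining with the previous display gives \eqref{Phi10}.

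For \eqref{Phi101} I would first justify differentiating under the integral sign: the estimates of Lemma~\ref{th-ptx2} and Lemma~\ref{l:dertxpest} provide a locally uniform integrable majorant for the integrand defining $(L^x-L^y)[p_t^y(y-\cdot)](x)$ and its time derivative, so dominated convergence gives both the existence of $\partial_t\Phi_t(x,y)$ and its continuity in $t$, together with
\begin{equation*}
  \partial_t\Phi_t(x,y)\;=\;(L^x-L^y)\bigl[\partial_t p_t^y(y-\cdot)\bigr](x).
\end{equation*}
The same majorization as above then applies with $p_t^y$ replaced by $\partial_t p_t^y$: Lemma~\ref{l:dertxpest} shows that $z\mapsto \partial_t p_t^y(z)$ fulfills \eqref{eq:avfest} with $\zeta=d+\alpha$ and $\kappa=0$, and Lemma~\ref{l:StGenerEst} produces the extra factor $t^{-1}$, giving \eqref{Phi101}. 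The only delicate point in the argument is the interchange of $\partial_t$ with $L^x-L^y$; that is where the bounds from Lemma~\ref{th-ptx2} and Lemma~\ref{l:dertxpest} (split into $|u|\le t^{1/\alpha}$ via Taylor and $|u|>t^{1/\alpha}$ via the tail estimate \eqref{eq:nu(A)_est}) are essential to produce the uniform dominant required for Lebesgue's theorem.
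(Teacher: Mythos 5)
Your proposal is correct and follows essentially the same route as the paper: rewrite $\Phi_t(x,y)=(L^x-L^y)[p_t^y(y-\cdot)](x)$ using the Kolmogorov equation \eqref{Couchy_for_Levy}, extract the factor $M_0(|x-y|^\eta\wedge 1)$ from \textbf{A2}, and control the remaining compensated integral by $\modgener p_t^0(x,y)$ via Lemma~\ref{th-ptx2} and Lemma~\ref{l:StGenerEst} with $\zeta=d$, $\kappa=0$ (and $\zeta=d+\alpha$ for the time derivative). The justification of differentiation under the integral sign by the dominated-convergence majorant built from Lemma~\ref{l:dertxpest} is also exactly the paper's argument.
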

\begin{proof} By the symmetry of $\nu(x,\cdot)$ for every  $ x\in \rd$  and  \textbf{A2}, $\big|\Phi_t(x,y)\big|$ equals
  \begin{align*}
	           &    \left| \int \left(p_t^0(x+u,y)- p_t^0 (x,y)-\scalp{u}{\nabla_x p_t^0(x,y)} \indyk{\{|u|\leq t^{1/\alpha}\}} \right) \,
	          \left(\nu(x,du) - \nu(y,du)\right) \right| \\
&\leq  C \left( |y-x|^{\eta}\wedge 1\right)\modgener   p_t^0(x,y).
  \end{align*}
  By Lemma~\ref{th-ptx2} and Lemma~\ref{l:StGenerEst} with $\zeta=d$ and $\kappa=0$ we get \eqref{Phi10}. The estimate \eqref{Phi101}
	follows from Lemma~\ref{l:StGenerEst} and the fact that  $\partial_t \Phi_t(x,y)$ equals
	\begin{equation*}\label{diffofPhi}
\begin{split}
  &  \int  \partial_t  \left( p_t^0(x+u,y)-  p_t^0 (x,y)-\scalp{u}{\nabla_x  p_t^0(x,y)} \indyk{|u|\leq 1} \right)
	          \left(\nu(x,du) - \nu(y,du)\right).
\end{split}
  \end{equation*}
  We can change the order of differentiation and integration since
	by Lemma~\ref{l:dertxpest} for every $t>0$ and $\epsilon\in (-t/2,t/2)$ we have
\begin{align*}
    \big|\partial_t &p_{t+\epsilon}^0(x+u,y)- \partial_t p_{t+\epsilon}^0 (x,y)-\scalp{u}{\nabla_x \partial_t p_{t+\epsilon}^0(x,y)} \indyk{\{|u|\leq 1\}} \big| \\
	 & \leq c_1 (t+\epsilon)^{-1-(d+2)/\alpha} |u|^2 \, \indyk{\{|u|\leq 1\}} + c_2 (t+\epsilon)^{-1-d/\alpha} \indyk{\{|u| > 1\}} \\
	 & \leq c_3 t^{-1-(d+2)/\alpha} |u|^2 \, \indyk{\{|u|\leq 1\}} +  c_4 t^{-1-d/\alpha}\indyk{\{|u| > 1\}} \\
	 &   = : g_t(u) ,\quad u\in\Rd, x,y\in\Rd,
\end{align*}
and $\int_{\Rd} g_t(u)\, \nu(w,du) < \infty$ for every $w\in\Rd$. This yields \eqref{diffofPhi}
and the continuity  of $t\mapsto \partial_t \Phi_t(x,y)$.
\end{proof}

To estimate $\Phi^{\boxtimes k}$ we will use the following  \emph{sub-convolution} property.

\begin{definition}  A non-negative kernel $H_{t}(x), t>0, x\in \rd,$ has the sub-convolution property if
 there is a constant $C_{H}>0$ such that
\begin{equation}\label{H0}
(H_{t-s} * H_s)(x)\leq C_{H} H_{t}(x), \quad 0<s< t , \quad x\in \rd.
\end{equation}
\end{definition}
It follows from Lemma~\ref{subconvG} that $G_t^{(\beta)}(x)$  has the sub-convolution property. On the other hand,  the kernel  $t^{-1}(1\wedge |x|^\eta) G^{(\alpha+\gamma)}_t (x)$  from Lemma~\ref{Phi-up} does not have it; take for instance $x=0$ in \eqref{H0} or see \cite{2014arXiv1412.8732K} in the case when $d=\gamma$. To circumvent this problem,  for $\zeta>0$ and $\kappa\in (d-\alpha,d]$ we  define
\begin{equation}\label{H10}
  H_t^{(\kappa,\zeta)}(x)= \left(t^{-\zeta /\alpha}\wedge \Big(\frac{|x|}{t^{1/\alpha}}\vee 1 \Big)^\zeta \right)G_t^{(\alpha+\kappa)}(x).
\end{equation}
\begin{proposition}\label{sub-conv} Assume that
\begin{equation}\label{algam}
\alpha+\kappa-d>\zeta.
\end{equation}
The kernels $H_t^{(\kappa,\zeta)}(x)$ satisfy the sub-convolution property with  some constant $C_H$
and there exists a positive constant $C>0$ such that
\begin{equation}\label{interg}
  \int_\rd H_t^{(\kappa,\zeta)} (x)dx \leq C, \quad t>0.
\end{equation}
\end{proposition}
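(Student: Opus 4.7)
The plan is to exploit the algebraic identity
$$H_t^{(\kappa,\zeta)}(x)=A_t(x)\wedge B_t(x),\qquad A_t(x):=G_t^{(\alpha+\kappa-\zeta)}(x),\quad B_t(x):=t^{-\zeta/\alpha}G_t^{(\alpha+\kappa)}(x),$$
obtained by directly evaluating the prefactor $t^{-\zeta/\alpha}\wedge(|x|/t^{1/\alpha}\vee 1)^\zeta$ against $G_t^{(\alpha+\kappa)}$: the second alternative multiplied by $G_t^{(\alpha+\kappa)}$ equals $G_t^{(\alpha+\kappa-\zeta)}=A_t$, while the first gives $B_t$. One checks that $A_t(x)\le B_t(x)$ exactly when $|x|\le 1$ and $t\le 1$. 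The integrability \eqref{interg} is then immediate: $H_t\le A_t$ and the substitution $y=x/t^{1/\alpha}$ yields $\int_\rd A_t(x)\,dx=\int_\rd(|y|\vee 1)^{-(\alpha+\kappa-\zeta)}\,dy$, which is finite and independent of $t$ precisely because \eqref{algam} gives $\alpha+\kappa-\zeta>d$.

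For the sub-convolution \eqref{H0}, by the symmetry $s \mapsto t-s$ of the convolution I may assume $s\le t/2$, so $t/2\le t-s\le t$, and I split into three complementary cases. \textbf{Case (i):} if $|x|\le 1$ and $t\le 1$, then $H_t=A_t$; applying $H\le A$ to both factors and Lemma~\ref{subconvG} (valid because $\alpha+\kappa-\zeta\in(d,d+2)$) yields $(H_{t-s}*H_s)(x)\le(A_{t-s}*A_s)(x)\le CA_t(x)=CH_t(x)$. \textbf{Case (ii):} if $t>1$ and $|x|<t^{1/\alpha}$, then $H_t(x)=t^{-(d+\zeta)/\alpha}$; the uniform pointwise bound $H_{t-s}(z)\le B_{t-s}(z)\le(t-s)^{-(d+\zeta)/\alpha}\le CH_t(x)$ (valid since $s\le t/2$) combined with the integrability just proved gives $(H_{t-s}*H_s)(x)\le CH_t(x)\int H_s\le C'H_t(x)$.

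\textbf{Case (iii):} if $|x|\ge t^{1/\alpha}$ (so $H_t=B_t$, with the explicit form $H_t(x)=t^{(\alpha+\kappa-d-\zeta)/\alpha}|x|^{-(\alpha+\kappa)}$), I split $(H_{t-s}*H_s)(x)=I_1+I_2$ along $|y|\le|x|/2$ and $|y|>|x|/2$. In $I_1$, $|x-y|\ge|x|/2$, so monotonicity of $G$ and a 3G-type comparison $(t-s)^{-\zeta/\alpha}G_{t-s}^{(\alpha+\kappa)}(x/2)\le CB_t(x)$ (valid for $s\le t/2$ since both time and space scales are comparable up to constants) give $I_1\le CH_t(x)\int H_s\le CH_t(x)$. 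In $I_2$, $|y|>|x|/2\ge t^{1/\alpha}/2$ places $y$ in the power-decay regime of $B_s$ (at least for $\alpha\le 1$), and $H_s(y)\le B_s(y)\le Cs^{(\alpha+\kappa-d-\zeta)/\alpha}|x|^{-(\alpha+\kappa)}$; integrating $H_{t-s}$ to a uniform constant and using $(s/t)^{(\alpha+\kappa-d-\zeta)/\alpha}\le 1$ yields $I_2\le CH_t(x)$. Positivity of this exponent is exactly the assumption \eqref{algam}.

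The main obstacle is Case~(iii), and within it a narrow sub-regime (possible only for $\alpha>1$ and $s\in(t/2^\alpha,t/2]$) in which $|y|>|x|/2$ coexists with $|y|<s^{1/\alpha}$. There $|x|$ is pinched between constant multiples of $t^{1/\alpha}$ and $s^{1/\alpha}$, the integration region has volume $\le Cs^{d/\alpha}$, and the crude bound $H_s(y)\le s^{-d/\alpha}$ together with $H_{t-s}(x-y)\le Ct^{-(d+\zeta)/\alpha}$ and comparability of $|x|^{\alpha+\kappa}$ with $t^{(\alpha+\kappa)/\alpha}$ again recovers the bound $\le CH_t(x)$.
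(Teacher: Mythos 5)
The proposal is correct and follows essentially the same route as the paper: it writes $H_t^{(\kappa,\zeta)}$ as the minimum of $G_t^{(\alpha+\kappa-\zeta)}$ and $t^{-\zeta/\alpha}G_t^{(\alpha+\kappa)}$, treats the near region via Lemma~\ref{subconvG}, and treats the far region via the split at $|y|=|x|/2$ together with the monotonicity of $t\mapsto t^{(\alpha+\kappa-d-\zeta)/\alpha}$, which is exactly where \eqref{algam} enters. The only imprecision is the assertion in Case~(iii) that $H_t=B_t$ whenever $|x|\ge t^{1/\alpha}$ — this fails for $t<1$ and $t^{1/\alpha}\le |x|<1$, where $H_t=A_t<B_t$ — but that sub-region is already covered by Case~(i), so the (overlapping rather than complementary) cases still exhaust all of $(0,\infty)\times\Rd$ and no gap results.
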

\begin{proof}
The proof follows that of \cite[Proposition 3.3]{2014arXiv1412.8732K}.
We have
\begin{equation}\label{H20}
  H_t^{(\kappa,\zeta)}(x) \leq (t^{-\zeta/\alpha}\wedge 1) G_t^{(\kappa+ \alpha-\zeta)}(x),\quad x\in\Rd, t>0,
\end{equation}
and
\begin{equation}\label{H201}
  H_t^{(\kappa,\zeta)}(x)  = (t^{-\zeta/\alpha}\wedge 1)\, G_t^{(\kappa+ \alpha-\zeta)}(x),
	\quad |x|\leq 1 \vee t^{1/\alpha},t>0.
\end{equation}
Clearly, \eqref{H20} implies that
\begin{equation}\label{H30}
\int_\rd H_t^{(\kappa,\zeta)}(dx)\leq C.
\end{equation}
We notice that
$$
  ( (t-s)^{-\zeta/\alpha} \wedge 1 ) (s^{-\zeta/\alpha}\wedge 1) \leq 2^{\zeta/\alpha} (t^{-\zeta/\alpha} \wedge 1), \quad 0<s<t.
$$
By this, Lemma \ref{subconvG}, \eqref{H20} and \eqref{H201},
$$
  \Big( H_{t-s}^{(\kappa, \zeta)} * H_s^{(\kappa,\zeta)}\Big)(x)\leq
	C  H_t^{(\kappa,\zeta)}(x), \quad |x|\leq 1 \vee t^{1/\alpha}, t>0.
$$
To complete the proof we assume that $|x|\geq 1 \vee t^{1/\alpha}$. We have
 $$
   \Big( H_{t-s}^{(\kappa,\zeta)} * H_s^{(\kappa,\zeta)}\Big)(x)
	 \leq  \left(\int_{|z|\geq |x|/2} +
	       \int_{|x-z|\geq |x|/2}\right)H_{t-s}^{(\kappa,\zeta)}(z) H_s^{(\kappa,\zeta)}(x-z)dz.
$$
By the structure of $H_t^{(\kappa,\zeta)}(x)$, for $|z|\geq |x|/2$ we obtain $H_{t-s}^{(\kappa,\zeta)}(z)\leq c H_{t-s}^{(\kappa,\zeta)}(x)$.
We have $H_t^{(\kappa,\zeta)}(x) = t^{1-(\zeta+d-\kappa)/\alpha} |x|^{-\kappa-\alpha}$, and by
\eqref{algam},
\begin{align*}
  H_{t-s}^{(\kappa,\zeta)}(x)
	&  = (t-s)^{1-(\zeta+d-\kappa)/\alpha} |x|^{-\kappa-\alpha}\\
	 & \leq t^{-1+(\zeta+d-\kappa)/\alpha} |x|^{-\kappa-\alpha}
  = H_t^{(\kappa,\zeta)}(x).
\end{align*}
Using \eqref{H30}
we get
\begin{align*}
\int_{|z|\geq |x|/2}H_{t-s}^{(\kappa,\zeta)}(z) &H_s^{(\kappa,\zeta)}(x-z)\,dz\leq c  H_t^{(\kappa,\zeta)}(x) \int_{|z|\geq |x|/2}H_t^{(\kappa,\zeta)}(x-z)\,dz
\\&\leq c H_t^{(\kappa,\zeta)}(x) \int_{\rd}H_t^{(\kappa,\zeta)}(z)\,dz
\leq C H_t^{(\kappa,\zeta)}(x).
\end{align*}
Similarly,
$$
\int_{|x-z|\geq |x|/2} H_{t-s}^{(\kappa,\zeta)}(z) H_s^{(\kappa,\zeta)}(x-z)dz\leq C H_t^{(\kappa,\zeta)}(x).\qedhere
$$
\end{proof}

 Let us rewrite the upper estimate in \eqref{Phi10}.  Since
  $$
    1\wedge  |x|^\theta= t^{\theta/\alpha}\left(  t^{-\theta/\alpha} \wedge\left(\frac{|x|}{t^{1/\alpha}}\right)^\theta\right)
	  \leq t^{\theta/\alpha}\left(t^{-\theta/\alpha}\wedge\left( \left( \frac{|x|}{t^{1/\alpha}}\right)^\theta \vee 1 \right)\right),
  $$
   we get
   \begin{equation}\label{GH}
 t^{-\theta/\alpha}(1\wedge |x|^\theta)  G_t^{(\alpha+\gamma)}(x)\leq H_t^{(\gamma, \theta)}(x),
 \end{equation}
 which implies for $\theta\leq \eta$
 \begin{equation}\label{Phi1}
 \big| \Phi_t(x,y)\big| \leq C_\Phi t^{-1+\theta/\alpha} H_t^{(\gamma,\theta)}(y-x), \quad x,y\in \rd, t>0.
 \end{equation}
Using the sub-convolution property of $H_t^{(\gamma,\theta)}(x)$,  we can estimate $\Phi_t^{\boxtimes k}(x,y)$. Let
\begin{equation}\label{thet}
0<\theta< \alpha \wedge \eta\wedge (\alpha+\gamma-d).
\end{equation}

\begin{lemma}\label{Phi-l20} For every $k\geq 2$ and $\theta$ satisfying \eqref{thet} we have
  \begin{equation}\label{Phi20}
    \left|\Phi\right|^{\boxtimes k}_t(x,y)\leq \frac{C_1  C_2^k}{\Gamma(k \theta/\alpha)}t^{-1+k\theta/\alpha}  H_t^{(\gamma,\theta)}(y-x), \quad x,y\in \rd, \quad t>0.
  \end{equation}
\end{lemma}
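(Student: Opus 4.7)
The natural approach is induction on $k$, driving the convolution recursion with the sub-convolution property of $H_t^{(\gamma,\theta)}$ (Proposition~\ref{sub-conv}) and extracting a Beta-function factor from the time integral at each step. Note that the hypothesis \eqref{thet} ensures $\alpha+\gamma-d>\theta$, which is exactly the condition \eqref{algam} needed to invoke sub-convolution with $\kappa=\gamma$ and $\zeta=\theta$.

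For the base case $k=2$, I would use \eqref{Phi1} to bound both factors in $|\Phi|^{\boxtimes 2}_t(x,y)=\int_0^t\!\!\int_\rd |\Phi_{t-s}(x,z)||\Phi_s(z,y)|\,dz\,ds$, then invoke \eqref{H0} on the spatial convolution to obtain
$$|\Phi|^{\boxtimes 2}_t(x,y)\le C_\Phi^2 C_H H_t^{(\gamma,\theta)}(y-x)\int_0^t (t-s)^{-1+\theta/\alpha}s^{-1+\theta/\alpha}\,ds,$$
and the remaining time integral equals $B(\theta/\alpha,\theta/\alpha)\,t^{-1+2\theta/\alpha}=\Gamma(\theta/\alpha)^2\Gamma(2\theta/\alpha)^{-1}t^{-1+2\theta/\alpha}$. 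This matches the claimed form with $C_2$ absorbing $C_\Phi C_H\Gamma(\theta/\alpha)$ and $C_1$ fixed by the $k=2$ case.

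For the inductive step, assuming \eqref{Phi20} holds at level $k$, I write
$$|\Phi|^{\boxtimes(k+1)}_t(x,y)=\int_0^t\!\!\int_\rd |\Phi|^{\boxtimes k}_{t-s}(x,z)|\Phi_s(z,y)|\,dz\,ds,$$
apply the inductive bound together with \eqref{Phi1}, pull the temporal factors $(t-s)^{-1+k\theta/\alpha}s^{-1+\theta/\alpha}$ outside the spatial integral, and use \eqref{H0} once more. The spatial integral collapses to $C_H H_t^{(\gamma,\theta)}(y-x)$, and the time integral gives $B(k\theta/\alpha,\theta/\alpha)\,t^{-1+(k+1)\theta/\alpha}=\Gamma(k\theta/\alpha)\Gamma(\theta/\alpha)\Gamma((k+1)\theta/\alpha)^{-1}t^{-1+(k+1)\theta/\alpha}$. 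The factor $\Gamma(k\theta/\alpha)$ cancels exactly with the $1/\Gamma(k\theta/\alpha)$ from the inductive hypothesis, which is the whole point of carrying the Gamma function in the denominator. The new factor $\Gamma(\theta/\alpha)\,C_\Phi C_H$ is absorbed by replacing $C_2^k$ by $C_2^{k+1}$, provided $C_2\ge C_\Phi C_H\Gamma(\theta/\alpha)$.

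No real obstacle arises; the only points that require care are (i) verifying that \eqref{thet} implies \eqref{algam} so that Proposition~\ref{sub-conv} is applicable, and (ii) making sure that using the pointwise bound $(1\wedge|y-x|^\eta)\le(1\wedge|y-x|^\theta)$ and the identity preceding \eqref{GH} gives the clean form \eqref{Phi1} that feeds the induction. The Beta-function bookkeeping is exactly the classical device that produces a $1/\Gamma(k\theta/\alpha)$ decay, ensuring that the series $\sum_k|\Phi|^{\boxtimes k}_t$ in \eqref{Psi2} will converge and, after summation, produce the exponential bound $e^{ct}$ announced in \eqref{up10}.
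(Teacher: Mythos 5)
Your proof is correct and follows essentially the same route as the paper: induction on $k$, with \eqref{Phi1} feeding the base case, Proposition~\ref{sub-conv} collapsing the spatial convolution to $C_H H_t^{(\gamma,\theta)}$, and the Beta-function identity producing the $\Gamma(k\theta/\alpha)^{-1}$ decay. The paper simply starts the induction at $k=1$ (where \eqref{Phi1} is the statement itself, with $C_1=C_H^{-1}$ and $C_2=C_\Phi C_H\Gamma(\theta/\alpha)$) rather than at $k=2$, a cosmetic difference.
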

\begin{proof}   Let $C_1=C_H^{-1}$ ,  $C_2= C_\Phi C_H\Gamma(\theta/\alpha)$, where $C_\Phi$ is from \eqref{Phi1}  and $C_H$ is from {\rm Proposition \ref{sub-conv}}. We will use induction. For $k=1$ we already have \eqref{Phi1}. Suppose that \eqref{Phi20} holds for $k$.
By the sub-convolution property of $H_t^{(\gamma,\theta)}$,
  \begin{align*}
    \big|\Phi^{\boxtimes (k+1)}_t&(x,y)\big|\\
   & \leq  \frac{C_1 C_\Phi C_2^k }{\Gamma(k\theta/\alpha)}\int_0^t (t-s)^{-1+k \theta/\alpha} s^{-1+\theta/\alpha}
		   \int_{\Rd} H_{t-s}^{(\gamma,\theta)}(x-z)H_{s}^{(\gamma,\theta)}(z-y)\, dzds \\
    &  \leq \frac{C_1 C_\Phi C_H C_2^k }{\Gamma(k\theta/\alpha)} H_t^{(\gamma,\theta)}(y-x)\int_0^t
      (t-s)^{-1+k \theta/\alpha} s^{-1+\theta/\alpha}\, ds  \\
    &  = \frac{C_1 C_\Phi C_H C_2^k }{\Gamma(k\theta/\alpha)}  t^{-1+(k+1)\theta/\alpha}  H_t^{(\gamma,\theta)}(y-x)
	    \frac{\Gamma(k\theta/\alpha)\Gamma(\theta/\alpha)}{\Gamma((k+1)\theta/\alpha)}  \\
    &  = \frac{C_1 C_2^{k+1}  }{\Gamma((k+1)\theta/\alpha)} t^{-1+(k+1)\theta/\alpha} H_t^{(\gamma,\theta)}(y-x).\qedhere
  \end{align*}
\end{proof}

\begin{corollary}\label{Phik_t_cont}
   For all $x,y\in\Rd$ and $k=1,2,...$, $t\to \Phi_t^{\boxtimes k}(x,y)$ is continuous.
\end{corollary}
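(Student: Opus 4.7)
The plan is to prove continuity by induction on $k$. The base case $k=1$ is immediate from Lemma~\ref{Phi-up}, which gives both existence of $\partial_t \Phi_t(x,y)$ and its continuity in $t$. For the inductive step, I would rewrite the $k$-fold convolution so that the domain of integration is independent of $t$. Substituting $s = t\tau$ in
\[
\Phi_t^{\boxtimes k}(x,y) = \int_0^t \int_{\Rd} \Phi_{t-s}(x,z)\,\Phi_s^{\boxtimes(k-1)}(z,y)\,dz\,ds
\]
produces
\[
\Phi_t^{\boxtimes k}(x,y) = t \int_0^1 G(t,\tau)\,d\tau, \qquad G(t,\tau) := \int_{\Rd} \Phi_{t(1-\tau)}(x,z)\,\Phi_{t\tau}^{\boxtimes(k-1)}(z,y)\,dz.
\]

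Fix $t_0 > 0$ and restrict attention to $t$ in a compact neighborhood $[t_0/2, 2t_0]$. I would apply dominated convergence twice. For the inner integral (in $z$), note that for each fixed $\tau\in(0,1)$, both $t(1-\tau)$ and $t\tau$ remain bounded in a compact interval away from $0$, so by the induction hypothesis and the continuity statement in Lemma~\ref{Phi-up}, the integrand is continuous in $t$ for every $z$. A $z$-integrable dominant is obtained from Lemma~\ref{Phi-up} (or Lemma~\ref{Phi-l20} for the $(k-1)$-fold factor) combined with the pointwise bound \eqref{H20}, which yields $H^{(\gamma,\theta)}_s(z) \le C(1+|z|)^{-\alpha-\gamma+\theta}$ uniformly for $s$ in the compact range. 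Since $\alpha+\gamma-\theta > d$ by \eqref{thet}, the product of two such tails is integrable in $z$. This gives continuity of $t \mapsto G(t,\tau)$ at $t_0$ for each $\tau \in (0,1)$.

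For the outer integral (in $\tau$), I would use Lemma~\ref{Phi-l20} for the $\Phi_{t\tau}^{\boxtimes(k-1)}$ factor and \eqref{Phi1} for $\Phi_{t(1-\tau)}$, then invoke the sub-convolution property from Proposition~\ref{sub-conv} to obtain
\[
|G(t,\tau)| \;\le\; C\, (1-\tau)^{-1+\theta/\alpha}\,\tau^{-1+(k-1)\theta/\alpha}\, H^{(\gamma,\theta)}_t(y-x),
\]
uniformly for $t \in [t_0/2, 2t_0]$; since $\theta/\alpha > 0$, the resulting $\tau$-bound is integrable on $(0,1)$ and independent of $t$ after bounding $H^{(\gamma,\theta)}_t(y-x)$ by a constant depending only on $t_0, x, y$. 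A second application of dominated convergence yields continuity of $t \mapsto \int_0^1 G(t,\tau)\,d\tau$, and hence of $t \mapsto \Phi_t^{\boxtimes k}(x,y)$, at $t_0$.

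The main technical point, and the only place where care is required, is securing the $z$-integrable dominant for the inner integral: one must exploit that $H^{(\gamma,\theta)}_s$ behaves like the integrable tail $|z|^{-\alpha-\gamma+\theta}$ at infinity for $s$ bounded away from $0$, so that the product of the two factors is integrable in $z$ without any help from the sub-convolution property (the latter is only needed for the outer $\tau$-bound). Everything else is routine once the two integrals are decoupled via the scaling substitution.
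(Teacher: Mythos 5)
Your proof is correct and takes a genuinely different route from the paper's. The paper proves continuity directly, without induction: it writes $\Phi^{\boxtimes(k+1)}_{t+h}-\Phi^{\boxtimes(k+1)}_t$ as a sum of three integrals over the time intervals $[0,t-h]$, $[t-h,t]$ and $[t,t+h]$, applies the derivative bound $|\partial_t\Phi_t|\lesssim t^{-2}(1\wedge|y-x|^\eta)G_t^{(\alpha+\gamma)}(y-x)$ from Lemma~\ref{Phi-up} to the first piece, and the size bounds from Lemma~\ref{Phi-up} and Lemma~\ref{Phi-l20} plus sub-convolution to the other two, ending up with an explicit modulus of continuity of order $h^{\theta/\alpha}$. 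Your argument instead proceeds by induction on $k$, decouples the domain of integration from $t$ via the scaling $s=t\tau$, and then applies dominated convergence twice: once in $z$, where the crucial observation is that $H_s^{(\gamma,\theta)}$ has a $\tau$-dependent but $t$-uniform tail bound $\lesssim(1+|z|)^{-(\alpha+\gamma-\theta)}$ on a compact $t$-interval and $\alpha+\gamma-\theta>d$ makes the product integrable, and once in $\tau$, where sub-convolution gives the $\tau$-integrable dominant $(1-\tau)^{-1+\theta/\alpha}\tau^{-1+(k-1)\theta/\alpha}$. What your approach buys is that it uses only continuity of $\Phi_t$ in $t$ (not differentiability) and leaves the convergence to soft DCT arguments, so it is more robust and shorter; what the paper's approach buys is a quantitative local H\"older modulus, which is sometimes useful even if not stated in the corollary. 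Both arguments rest on the same three ingredients — Lemma~\ref{Phi-up}, Lemma~\ref{Phi-l20}, and the sub-convolution property — so neither is more general in substance, only in presentation.
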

\begin{proof}
   For every $h\in (0,t/2)$ we have
  \begin{align*}
	   & \left| \Phi_{t+h}^{\boxtimes (k+1)}(x,y) -  \Phi_{t}^{\boxtimes (k+1)}(x,y) \right| \\
		& \leq  \int_0^{t-h} \int_{\Rd} \left| \Phi_{t+h-s}(x,z) -  \Phi_{t-s}(x,z)\right| \Phi_{s}^{\boxtimes k}(z,y)\, dz ds \\
		&      \,\, +  \int_{t-h}^{t} \int_{\Rd} \left| \Phi_{t+h-s}(x,z) -  \Phi_{t-s}(x,z)\right| \Phi_{s}^{\boxtimes k}(z,y)\, dz ds \\
		&      \,\,  + \int_{t}^{t+h} \int_{\Rd} \Phi_{t+h-s}(x,z) \Phi_{s}^{\boxtimes k}(z,y)\, dz ds = I_1(h) + I_2(h) + I_3(h) .
	\end{align*}
	Using Lemma~\ref{Phi-up}, Lemma~\ref{Phi-l20} and \eqref{GH} we obtain
	\begin{eqnarray*}
	  I_1(h)
		& \leq & c_1 h \int_0^{t-h} \int_{\Rd} (t-s)^{-2} (1\wedge |z-x|^{\theta}) G_{t-s}^{(\alpha+\gamma)} (z-x) \Phi_{s}^{\boxtimes k}(z,y)\, dz ds \\
		& \leq & c_2 h \int_0^{t-h} \int_{\Rd} (t-s)^{-2+\theta/\alpha} s^{-1+k\theta/\alpha} H_{t-s}^{(\gamma,\theta)} (z-x)   H_s^{(\gamma,\theta)}(y-z)\, dz ds \\
		& \leq & c_3 h H_{t}^{(\gamma,\theta)} (y-x) \int_0^{t-h} (t-s)^{-2+\theta/\alpha} s^{-1+k\theta/\alpha}\, ds \\
		& \leq & c_4 H_{t}^{(\gamma,\theta)} (y-x) t^{-1+k\theta/\alpha} h^{\theta/\alpha},
	\end{eqnarray*}
	and so $\lim_{h\to 0^+} I_1(h) = 0$. Furthermore,
	\begin{eqnarray*}
	  I_2(h)
		& \leq &  c_1 \int_{t-h}^{t} \int_{\Rd} (1\wedge |z-x|^{\theta}) (t+h-s)^{-1}G_{t+h-s}^{(\alpha+\gamma)}(z-x) \Phi_{s}^{\boxtimes k}(z,y)\, dz ds\\
		&      & + \,\, c_2 \int_{t-h}^{t} \int_{\Rd} (1\wedge |z-x|^{\theta}) (t-s)^{-1}G_{t-s}^{(\alpha+\gamma)}(z-x)
		           \Phi_{s}^{\boxtimes k}(z,y)\, dz ds \\
	  & \leq & c_3 H_{t+h}^{(\gamma,\theta)}(y-x) \int_{t-h}^{t} s^{-1+k\theta/\alpha}  (t+h-s)^{-1+\theta/\alpha}\, ds \\
		&      & +  \,\, c_4 H_{t}^{(\gamma,\theta)}(y-x) \int_{t-h}^{t} s^{-1+k\theta/\alpha} (t-s)^{-1+\theta/\alpha} \, ds \\
		& \leq & c_5 \left(H_{t+h}^{(\gamma,\theta)}(y-x) + H_{t}^{(\gamma,\theta)}(y-x) \right)t^{-1+k\theta/\alpha} h^{\theta/\alpha}.
	\end{eqnarray*}
	Similarly we obtain
	\begin{eqnarray*}
	  I_3(h)
		& \leq & c_1 \int_{t}^{t+h} \int_{\Rd} (1\wedge |z-x|^{\theta}) (t+h-s)^{-1}G_{t+h-s}^{(\alpha+\gamma)}(z-x) \Phi_{s}^{\boxtimes k}(z,y)\, dz ds \\
	  & \leq & c_2 \int_{t}^{t+h} \int_{\Rd} s^{-1+k\theta/\alpha}  (t+h-s)^{-1+\theta/\alpha}H_{t+h-s}^{(\gamma,\theta)}(z-x)  H_s^{(\gamma,\theta)}(y-z)\, dz ds \\
		& \leq & c_3 H_{t+h}^{(\gamma,\theta)}(y-x) t^{-1+k\theta/\alpha} h^{\theta/\alpha},
	\end{eqnarray*}
	so $\lim_{h\to 0^+} I_2(h) = \lim_{h\to 0^+} I_3(h) = 0$ (and analogously for negative $h$).
\end{proof}

\begin{proof}[Proof of Proposition~\ref{pr1}]

 By Lemma~\ref{Phi-l20}, the series  $\Psi^\#_t(x,y)= \sum_{m=1}^\infty |\Phi|_t^{(\boxtimes k)}(x,y)$  converges uniformly on  compact subsets of $(0,\infty)\times \rd\times\rd$. 
By the sub-convolution property of $H_t^{(\gamma,\theta)}(x)$,  \eqref{Phi1}, \eqref{Phi20},  and the estimate
\begin{equation}\label{eq:Get}
   \sum_{k=0}^\infty \frac{C_2^k t^{\zeta k}}{\Gamma((k+1)\zeta)}\leq c_1 e^{c_2 t},\quad \zeta>0,t>0,
\end{equation}
for which see, e.g., \cite{MR2929183}, we get
\begin{equation}\label{Psi1}
  |\Psi_t(x,y)|\le \Psi_t^\#(x,y)  \leq   c_3 t^{-1+\theta/\alpha}e^{c_2 t}H_t^{(\gamma,\theta)}(y-x), \quad x,y\in \Rd,\quad t>0.
\end{equation}
For every $t>0$ we have
\begin{equation}\label{GH22}
  G_s^{(\alpha+\gamma)}(x) \leq \frac{1}{t^{-\theta/\alpha}\wedge 1} H_s^{(\gamma,\theta)}(x),\quad
	x\in\Rd, s\in (0,t].
\end{equation}
Then, for $x,y\in \rd$, $t>0$,
\begin{eqnarray}
  &&\left|\big(p^0\boxtimes \Psi\big)_t(x,y)\right|\le   \big(p^0\boxtimes \Psi^\#\big)_t(x,y) \label{p0-psi}\\
  &&=    \int_0^t \int_{\Rd} p^0_{t-s}(x,z) \Psi_s^{\#}(z,y)\, dz ds  \nonumber \\ \nonumber
  & &\leq  c_4 \int_0^t s^{-1+\theta/\alpha}e^{c_2 s} \int_{\Rd} G_{t-s}^{(\alpha+\gamma)}(z-x) H_s^{(\gamma,\theta)}(y-z)\, dz ds \nonumber\\
	&& \leq  c_4 \int_0^t  \frac{s^{-1+\theta/\alpha}e^{c_2 s}}{t^{-\theta/\alpha}\wedge 1}\int_{\Rd} H_{t-s}^{(\gamma,\theta)}(z-x) H_s^{(\gamma,\theta)}(y-z)\, dz ds \nonumber \\
	&& \leq  c_5 t^{\theta/\alpha}e^{c_2 t} H_t^{(\gamma,\theta)}(y-x)  \label{e:small}\leq  c_5 e^{c_2 t} G_t^{(\alpha+\gamma)}(y-x), 
\end{eqnarray}
which follows from \eqref{H20}. This proves \eqref{up10}.
\end{proof}
From \eqref{e:small} we see in particular that $p_t(x,y)$ is well defined.
\begin{lemma} The following perturbation formula holds for all $t>0$, $x,y\in \Rd$,
	  \begin{equation}\label{eq:pfo}
		  p_t(x,y) = p^0_t(x,y) + \int_0^t \int_{\Rd} p_s(x,z)\Phi_{t-s}(z,y)\, dz ds.
		\end{equation}
\end{lemma}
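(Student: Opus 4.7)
The plan is to derive \eqref{eq:pfo} from the definition $p_t = p_t^0 + (p^0\boxtimes\Psi)_t$ together with the series representation $\Psi = \sum_{k=1}^\infty \Phi^{\boxtimes k}$ by a short algebraic manipulation of convolutions, with absolute convergence ensuring that associativity and reindexing are legitimate. In the $\boxtimes$ notation, the right-hand side of \eqref{eq:pfo} equals $p_t^0(x,y) + (p\boxtimes\Phi)_t(x,y)$ after the change of variable $\tau=t-s$, so the target identity is equivalent to $p\boxtimes\Phi = p^0\boxtimes\Psi$, i.e.\ $p\boxtimes\Phi = p - p^0$.

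First I would record the shift identity $\Psi\boxtimes\Phi = \Psi - \Phi$, which comes formally from $\Psi\boxtimes\Phi = \sum_{k\ge 1}\Phi^{\boxtimes k}\boxtimes\Phi = \sum_{k\ge 2}\Phi^{\boxtimes k} = \Psi - \Phi$. To justify exchanging the sum and the convolution here I would invoke Lemma~\ref{Phi-l20}: the pointwise bounds $|\Phi|^{\boxtimes k}_t(x,y) \le C_1 C_2^k\Gamma(k\theta/\alpha)^{-1}t^{-1+k\theta/\alpha}H_t^{(\gamma,\theta)}(y-x)$ are summable in $k$ (by the Mittag-Leffler type estimate \eqref{eq:Get}), so dominated convergence together with Fubini applied to $|\Phi|^{\boxtimes k}\boxtimes|\Phi|$ gives the reindexing termwise and hence in the limit.

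Next I would substitute $p = p^0 + p^0\boxtimes\Psi$ into $p\boxtimes\Phi$ and use associativity of $\boxtimes$ to write
\begin{equation*}
p\boxtimes\Phi \;=\; p^0\boxtimes\Phi \;+\; (p^0\boxtimes\Psi)\boxtimes\Phi \;=\; p^0\boxtimes\Phi \;+\; p^0\boxtimes(\Psi\boxtimes\Phi).
\end{equation*}
Associativity holds here by Fubini because the integrands are nonnegative (or absolutely integrable) in view of the estimates $p^0_t(x,y)\le cG_t^{(\alpha+\gamma)}(y-x)$ (Lemma~\ref{th-ptx2}), $\Psi^\#_t(x,y)\le c t^{-1+\theta/\alpha}e^{ct}H_t^{(\gamma,\theta)}(y-x)$ (see \eqref{Psi1}), and $|\Phi_t|\le C_\Phi t^{-1+\theta/\alpha}H_t^{(\gamma,\theta)}$; the finiteness of the triple integral then follows from the sub-convolution property of $H^{(\gamma,\theta)}$ in Proposition~\ref{sub-conv}, exactly as in the bound \eqref{p0-psi}.

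Substituting the shift identity $\Psi\boxtimes\Phi = \Psi - \Phi$ yields
\begin{equation*}
p\boxtimes\Phi \;=\; p^0\boxtimes\Phi \;+\; p^0\boxtimes\Psi \;-\; p^0\boxtimes\Phi \;=\; p^0\boxtimes\Psi \;=\; p - p^0,
\end{equation*}
which is \eqref{eq:pfo}. The only delicate point is the book-keeping: making sure that all swaps of sum and integral and all uses of Fubini are covered by the absolute bounds derived in Section~\ref{para}. Since every term appearing above is dominated by a finite multiple of $e^{ct}H_t^{(\gamma,\theta)}$ times an integrable time factor, this is routine once \eqref{Psi1} and Proposition~\ref{sub-conv} are in hand, and I expect no genuine analytic obstacle beyond that.
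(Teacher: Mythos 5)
Your proof is correct and is exactly the argument the paper intends: it unwinds the paper's one-line justification (``follows from \eqref{r}, \eqref{Psi} and Proposition~\ref{pr1}'') into the shift identity $\Psi\boxtimes\Phi=\Psi-\Phi$ plus associativity of $\boxtimes$, with Fubini legitimated by the absolute bounds from Lemma~\ref{Phi-l20}, \eqref{Psi1}, and Proposition~\ref{sub-conv}. No divergence from the paper's route, just a fuller spelling out of the book-keeping.
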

\begin{proof}
The identity follows from \eqref{r}, \eqref{Psi} and Proposition~\ref{pr1}.
\end{proof}
We note a qualitative difference between \eqref{eq:pfo} and \eqref{r}.
\subsection{Regularity of
$\Psi_s(x,y)$ and $p_t(x,y)$}

The statement of Proposition~\ref{pr1} implies the existence of the function $p_t(x,y)$.  In this section we establish the H\"older continuity in $x$ of the function $\Psi$ and a few auxiliary results for the proof Theorem~\ref{t-exist}.

\begin{lemma}\label{phol} For all $\epsilon\in (0,\theta)$, where $\theta$ satisfies \eqref{thet} and $T>0$  there exists $C=C(T)>0$ such that
for all $t\in (0,T]$, $x_1,x_2,y\in \rd$,
\begin{equation*}\label{psol-eq1}
  \big| \Psi_t(x_1,y)-\Psi_t(x_2,y)\big| \leq  C
  \big(|x_1-x_2|^{\theta-\epsilon}\wedge 1\big) t^{-1+\epsilon/\alpha} \Big(
  H_t^{(\gamma,\theta)}(y-x_1)+H_t^{(\gamma,\theta)}(y-x_2)\Big).
\end{equation*}
\end{lemma}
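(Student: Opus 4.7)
The plan is to reduce the statement for $\Psi=\sum_{k\geq 1}\Phi^{\boxtimes k}$ to an analogous $x$--H\"older bound for $\Phi$ itself, and then iterate by induction on $k$. Indeed, from $\Phi^{\boxtimes(k+1)}=\Phi\boxtimes\Phi^{\boxtimes k}$ one has
\[
\Phi^{\boxtimes(k+1)}_t(x_1,y)-\Phi^{\boxtimes(k+1)}_t(x_2,y)=\int_0^t\!\!\int_{\rd}\bigl[\Phi_{t-s}(x_1,z)-\Phi_{t-s}(x_2,z)\bigr]\,\Phi^{\boxtimes k}_s(z,y)\,dz\,ds,
\]
so once we have a H\"older bound for $\Phi$ in $x$ of the target shape, the bound for $\Phi^{\boxtimes(k+1)}$ follows by inserting the size bound \eqref{Phi20} for $\Phi^{\boxtimes k}$ and invoking the sub-convolution property of $H_t^{(\gamma,\theta)}$ (Proposition~\ref{sub-conv}). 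The $s$-integral reduces to a Beta integral $\int_0^t(t-s)^{-1+\epsilon/\alpha}s^{-1+k\theta/\alpha}ds=\Gamma(\epsilon/\alpha)\Gamma(k\theta/\alpha)t^{-1+(\epsilon+k\theta)/\alpha}/\Gamma((\epsilon+k\theta)/\alpha)$, and summing via \eqref{eq:Get} yields the claimed estimate for $\Psi$.

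For the $\Phi$-level H\"older estimate I split into two regimes. When $|x_1-x_2|\geq t^{1/\alpha}$, the triangle inequality with \eqref{Phi1} gives $|\Phi_t(x_1,y)-\Phi_t(x_2,y)|\leq 2C_\Phi t^{-1+\theta/\alpha}[H_t^{(\gamma,\theta)}(y-x_1)+H_t^{(\gamma,\theta)}(y-x_2)]$, and since $t\leq T$ and $t^{(\theta-\epsilon)/\alpha}\leq (T^{(\theta-\epsilon)/\alpha}\vee 1)(|x_1-x_2|^{\theta-\epsilon}\wedge 1)$ in this regime, this reproduces the target form. When $|x_1-x_2|<t^{1/\alpha}$ I use the telescoping
\[
\Phi_t(x_1,y)-\Phi_t(x_2,y)=(L^{x_1}-L^{x_2})p_t^y(y-x_1)+(L^{x_2}-L^y)p_t^y(y-x_1)-(L^{x_2}-L^y)p_t^y(y-x_2).
\]
The first term is bounded by $M_0(|x_1-x_2|^\eta\wedge 1)\modgener p_t^y(y-x_1)\leq c(|x_1-x_2|^\eta\wedge 1)t^{-1}G_t^{(\alpha+\gamma)}(y-x_1)$ via \textbf{A2} and Lemma~\ref{l:StGenerEst}. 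For the difference of point evaluations of $g(z):=(L^{x_2}-L^y)p_t^y(z)$ I apply the mean value theorem in $z$ and estimate $|\nabla g(z)|\leq c(|x_2-y|^\eta\wedge 1)t^{-1-1/\alpha}G_t^{(\alpha+\gamma)}(z)$ via \textbf{A2}, Lemma~\ref{th-ptx2} and Lemma~\ref{l:StGenerEst} applied to $\nabla p_t^y$; arguments as in the derivation of \eqref{lip-p} control $\sup_{z\in[y-x_1,y-x_2]}G_t^{(\alpha+\gamma)}(z)$ by $G_t^{(\alpha+\gamma)}(y-x_1)+G_t^{(\alpha+\gamma)}(y-x_2)$. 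Because $|x_1-x_2|\leq t^{1/\alpha}$, the resulting Lipschitz factor is absorbed into $|x_1-x_2|^\eta t^{(1-\eta)/\alpha}$, and combined with the H\"older-$\eta$ bound on the first term yields an overall H\"older-$\eta$ estimate. Geometric interpolation of this bound with the a priori bound \eqref{Phi1}, using weight $(\theta-\epsilon)/\eta\in(0,1)$ (permissible by \eqref{thet}), produces the prefactor $|x_1-x_2|^{\theta-\epsilon}$ and the scaling $t^{-1+\epsilon/\alpha}$; for $t\in(0,T]$ the inequality $G_t^{(\alpha+\gamma)}(x)\leq (T^{\theta/\alpha}\vee 1)H_t^{(\gamma,\theta)}(x)$ absorbs $G$ into $H$ at the price of a $T$-dependent constant.

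The principal difficulty is this H\"older estimate for $\Phi$ in the small-increment regime. Because $\nu(x,\cdot)$ is only H\"older in $x$, a direct gradient bound on $\Phi_t(\cdot,y)$ is not available; the two-term decomposition, together with the interpolation step and the constraint $\theta-\epsilon<\eta$ from \eqref{thet}, is what sidesteps this obstruction. The remaining ingredients--Beta-function bookkeeping in the inductive convolution estimate and summation of the series--parallel the proofs of Lemma~\ref{Phi-l20} and Proposition~\ref{pr1} and are routine in comparison.
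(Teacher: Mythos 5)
Your overall architecture coincides with the paper's: reduce the claim to an $x$--H\"older estimate for $\Phi$, treat $|x_1-x_2|\ge t^{1/\alpha}$ by the triangle inequality and \eqref{Phi1}, and in the small-increment regime use exactly the decomposition $\Phi_t(x_1,y)-\Phi_t(x_2,y)=(L^{x_1}-L^{x_2})p_t^y(y-x_1)+\bigl[(L^{x_2}-L^y)p_t^y(y-x_1)-(L^{x_2}-L^y)p_t^y(y-x_2)\bigr]$, which is the paper's $I_1+I_2$; the passage from $\Phi$ to $\Psi$ via convolution, sub-convolution of $H^{(\gamma,\theta)}$ and summation is also sound (the paper uses $\Psi=\Phi+\Phi\boxtimes\Psi$ instead of summing term by term, an immaterial difference).

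The gap is in your final interpolation step. Your ``overall H\"older-$\eta$ estimate'' is $|x_1-x_2|^\eta t^{-1-\eta/\alpha}\bigl(G_t^{(\alpha+\gamma)}(y-x_1)+G_t^{(\alpha+\gamma)}(y-x_2)\bigr)$ (the $I_2$-part contributes $|x_1-x_2|\,t^{-1-1/\alpha}G\le |x_1-x_2|^{\eta}t^{-1-\eta/\alpha}G$, and the extra $t^{-\eta/\alpha}$ cannot be avoided once you discard the factor $(|y-x_2|^\eta\wedge 1)$). Interpolating this with \eqref{Phi1} at weight $\lambda=(\theta-\epsilon)/\eta$ gives the $t$-exponent $-1+\bigl[-\lambda\eta+(1-\lambda)\theta\bigr]/\alpha=-1+\bigl[\epsilon-(\theta-\epsilon)\theta/\eta\bigr]/\alpha$, which is strictly smaller than $-1+\epsilon/\alpha$ (and can be below $-1$ for small $\epsilon$). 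Since the deficit is a negative power of $t$, it cannot be absorbed into $C(T)$ as $t\to0$, so the claimed scaling $t^{-1+\epsilon/\alpha}$ is not obtained. The missing ingredient is the one the paper exploits: in the $I_2$-term, \textbf{A2} produces the spatial H\"older factor $(|y-x_2|^\theta\wedge 1)$, and combining it with \eqref{GH} converts $G_t^{(\alpha+\gamma)}(y-x_2)$ into $t^{\theta/\alpha}H_t^{(\gamma,\theta)}(y-x_2)$, i.e.\ gains exactly the factor $t^{\theta/\alpha}$ you are short of. With that gain no interpolation is needed: one writes $|x_1-x_2|=|x_1-x_2|^{\theta-\epsilon}|x_1-x_2|^{1-\theta+\epsilon}\le|x_1-x_2|^{\theta-\epsilon}t^{(1-\theta+\epsilon)/\alpha}$ to land directly on $|x_1-x_2|^{\theta-\epsilon}t^{-1+\epsilon/\alpha}H_t^{(\gamma,\theta)}(y-x_2)$, and the $I_1$-term is handled by the elementary substitution $|x_1-x_2|^{\epsilon}\le t^{\epsilon/\alpha}$ together with $G_t^{(\alpha+\gamma)}\le(T^{\theta/\alpha}\vee1)H_t^{(\gamma,\theta)}$.
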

\begin{proof}
 We begin by proving that for $t\in (0,1]$, $x_1,x_2,y\in \rd$,
\begin{equation*}\label{phol-eq1}
\big| \Phi_t(x_1,y)-\Phi_t(x_2,y)\big| \leq  C \big(|x_1-x_2|^{\theta-\epsilon}\wedge 1\big) t^{-1+\epsilon/\alpha} \Big(
H_t^{(\gamma,\theta)}(y-x_1)+H_t^{(\gamma,\theta)}(y-x_2)\Big).
\end{equation*}
For $|x_1-x_2|\geq 1$ the estimate simply follows from \eqref{Phi1}.
Suppose now that \newline $t^{1/\alpha}\leq |x_1-x_2|\leq 1$. Then,
\begin{align*}
\big| \Phi_t(x_1,y)-\Phi_t(x_2,y)\big| &\leq
c_1 \big(\big| \Phi_t(x_1,y)\big| + \big|\Phi_t(x_2,y)\big|\big) \\
&\leq c_2 t^{-1+\theta/\alpha} \big( H_t^{(\gamma,\theta)}(y-x_1)+H_t^{(\gamma,\theta)}(y-x_2)\big)\\
&\leq c_3|x_1-x_2|^{\theta-\epsilon} t^{-1+\epsilon/\alpha} \big( H_t^{(\gamma,\theta)}(y-x_1)+H_t^{(\gamma,\theta)}(y-x_2)\big).
\end{align*}
Let $|x_1-x_2|\leq t^{1/\alpha}\wedge 1$ and
$$
g(x,y, u)= p_t^0(x+u,y)-p_t^0(x,y)-u\cdot \nabla_x p_t^0 (x,y) \indyk{\{|u|\leq t^{1/\alpha}\}}.
$$
We have
\begin{align*}
&  \Phi_t(x_1,y)-\Phi_t(x_2,y)
	 = \int_\rd g(x_1,y, u) [\nu(x_1,du)-\nu(x_2,du)] \\
  & \quad + \int_\rd \big(g(x_1,y, u)-g(x_2,y, u) \big) [\nu(x_2,du)-\nu(y,du)]= I_1+I_2.
\end{align*}
For $I_1$ by \textbf{A2}, Lemma~\ref{th-ptx2} and Lemma~\ref{l:StGenerEst} with $\zeta=d$, $\kappa=0$, we get
\begin{align*}
 |I_1|
 & \leq c_1 (|x_1-x_2|^\theta\wedge 1)\modgener p_t^0(x_1,y)  \leq c_2 (|x_1-x_2|^\theta\wedge 1) t^{-1} G_t^{(\alpha+\gamma)}(y-x_1).
\end{align*}
To estimate $I_2$ let
$$
  f_t(x) =  p_t^y(x+x_2-x_1) - p_t^y(x).
$$
Using the Taylor expansion, Lemma~\ref{th-ptx2} and the fact that $|x_2-x_1|\leq t^{1/\alpha}$, we get
\begin{eqnarray*}
  |f_t(x)|
	&   =  & \left|(x_2-x_1) \cdot\nabla_x p_t^y(x+\zeta (x_2-x_1))\right| \\
	& \leq & c_1 |x_2-x_1| t^{-1/\alpha} G_t^{\alpha+\gamma}\left(x+\zeta (x_2-x_1)\right) \\
	& \leq & c_2 |x_2-x_1| t^{-1/\alpha} G_t^{\alpha+\gamma}(x), \quad x\in\Rd, \ t>0,
\end{eqnarray*}
where we used some $\zeta\in [0,1]$.
Similarly, if $\beta\in\N_0^d$, $|\beta|=2$, then
$$
  |\partial_x^{\beta} f_t(x)| \leq c |x_2-x_1| t^{-3/\alpha} G_t^{\alpha+\gamma}(x).
$$
By \textbf{A2} and Lemma~\ref{l:StGenerEst} (applied with $\zeta=d+1$, $\kappa=0$) we get
\begin{equation}\label{I2}
\begin{split}
  |I_2|
	& \leq M_0 (|x_2-y|^\theta \wedge 1) \modgener f_t (y-x_2) \\
  & \leq c_2 (|x_2-y|^\theta \wedge 1) |x_2-x_1|t^{-1-1/\alpha} G_t^{(\alpha+\gamma)}(y-x_2).
\end{split}
\end{equation}
Then for  $|x_1-x_2|\leq t^{1/\alpha} \wedge 1$,  $t\in (0,T]$, using the inequality
\begin{equation}\label{gt}
  G_t^{(\alpha+\gamma)}(x)\leq (T^{\theta/\alpha} \vee 1) H_t^{(\gamma,\theta)}(x),
\end{equation}
we get
$$
  |I_1| \leq c_1 (T^{\theta/\alpha} \vee 1) |x_1-x_2|^{\theta-\epsilon}  t^{-1+\epsilon/\alpha} H_t^{(\gamma,\theta)}(y-x_1).
$$
Furthermore, using \eqref{GH} we obtain
\begin{eqnarray*}
  |I_2|
	& \leq & c_1 (|x_2-y|^\theta \wedge 1) |x_2-x_1|t^{-1-1/\alpha} G_t^{(\alpha+\gamma)}(y-x_2) \\
  & \leq & c_1 |x_2-x_1|t^{-1-1/\alpha+\theta/\alpha}
	         H_t^{(\gamma,\theta)}(y-x_2) \\
	&   =  & c_1 |x_2-x_1|^{\theta-\epsilon} |x_2-x_1|^{1-\theta+\epsilon} t^{-1-1/\alpha+\theta/\alpha}
	         H_t^{(\gamma,\theta)}(y-x_2) \\
  & \leq & c_1 |x_2-x_1|^{\theta-\epsilon} t^{-1+\epsilon/\alpha}H_t^{(\gamma,\theta)}(y-x_2).
\end{eqnarray*}
We now prove
the inequality in the statement of the lemma.
For $|x_1-x_2|\geq 1$ the estimate follows from the bound \eqref{Psi1} on $\Psi_t(x,y)$, so we let  $|x_1-x_2|\leq 1$.
Since
$$
\Psi_t(x,y)= \Phi_t(x,y)+ (\Phi\boxtimes\Psi)_t(x,y),
$$
by Proposition~\ref{sub-conv} for $t\in (0,T]$ we get
\begin{align*}
\Big| \Psi_t&(x_1,y)-\Psi_t(x_2,y) \Big|
 \leq  c_1 |x_1-x_2|^{\theta-\epsilon} t^{-1+\epsilon/\alpha}
          \Big( H_t^{(\gamma,\theta)}(y-x_1)+H_t^{(\gamma,\theta)}(y-x_2) \Big) \\
&       +\,\, c_2 |x_1-x_2|^{\theta-\epsilon}\int_0^t \int_\rd (t-s)^{-1+\epsilon/\alpha}
          \Big( H_{t-s}^{(\gamma,\theta)}(z-x_1)+H_{t-s}^{(\gamma,\theta)}(z-x_2)\Big) \\
&        \cdot\, s^{-1+\theta/\alpha} H_{s}^{(\gamma,\theta)}(y-z)\, dzds \\
& \leq  c_3  |x_1-x_2|^{\theta-\epsilon} t^{-1+\epsilon/\alpha}
          \Big( H_t^{(\gamma,\theta)}(y-x_1)+H_t^{(\gamma,\theta)}(y-x_2) \Big).\qedhere
\end{align*}
\end{proof}
We can finally apply the operator $L$ to $p_t(x,y)$.
\begin{lemma}\label{lem-ltil}
  For all $y\in\Rd$ and $t>0$ we have $p_t(\cdot,y) \in D(L)$, and
\begin{equation}\label{ltil}
  L_x p_t(x,y) = L_x p_t^0(x,y)+ \int_0^t\int_\rd L_x p_{t-s}^0(x,z)\Psi_s(z,y)\, dzds.
\end{equation}
\end{lemma}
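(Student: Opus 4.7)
The plan is to use the decomposition $p_t(x,y)=p_t^0(x,y)+I(x)$ with $I(x):=(p^0\boxtimes\Psi)_t(x,y)$ from \eqref{r}. Since $p_t^0(\cdot,y)=p_t^y(y-\cdot)$ lies in $C_0^2(\Rd)\subset D(L)$ by Lemma~\ref{th-ptx2} (the derivatives there decay like $G_t^{(\alpha+\gamma)}$, which vanishes at infinity), the first summand contributes $L_x p_t^0(x,y)$ directly to \eqref{ltil}. The lemma therefore reduces to showing that $I\in D(L)$ and that
\begin{equation*}
  L_x I(x) = \int_0^t\!\int_\Rd L_x p_{t-s}^0(x,z)\,\Psi_s(z,y)\,dz\,ds,
\end{equation*}
i.e.\ that $L_x$ commutes with the space-time integration defining $I$.

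The strategy is first to work with the truncated operator $L^{x,\delta}$ and then pass to the limit $\delta\to 0$. Because $\nu(x,\{|u|>\delta\})<\infty$, the measure $\indyk{\{|u|>\delta\}}\nu(x,du)$ is finite, and the bounds of Proposition~\ref{pr1} ensure absolute integrability of the whole integrand; Fubini's theorem thus gives
\begin{equation*}
  L^{x,\delta} I(x) = \int_0^t\!\int_\Rd L^{x,\delta} p_{t-s}^0(\cdot,z)(x)\,\Psi_s(z,y)\,dz\,ds.
\end{equation*}
Since $p_{t-s}^0(\cdot,z)\in C_0^2(\Rd)$, the inner integrand converges pointwise to $L_x p_{t-s}^0(x,z)$ as $\delta\to 0$, and \eqref{ltil} follows once one produces a $\delta$-uniform integrable majorant.

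The main obstacle is that the direct pointwise bound, obtained by combining \textbf{A2}, Lemma~\ref{th-ptx2} and Lemma~\ref{l:StGenerEst} with $f(\cdot):=p_{t-s}^z(z-\cdot)$, only gives $|L_x p_{t-s}^0(x,z)|\le c(t-s)^{-1}G_{t-s}^{(\alpha+\gamma)}(z-x)$, and the factor $(t-s)^{-1}$ is not integrable near $s=t$. I circumvent this by splitting $\int_0^t=\int_0^{t/2}+\int_{t/2}^t$. On $[0,t/2]$ one has $(t-s)^{-1}\le 2/t$, and combining the above bound with the $\Psi$-estimate of Proposition~\ref{pr1} and the sub-convolution property of $H_t^{(\gamma,\theta)}$ from Proposition~\ref{sub-conv} furnishes an integrable majorant. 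On $[t/2,t]$ I extract cancellation by writing
\begin{equation*}
  \int_\Rd L_x p_{t-s}^0(x,z)\Psi_s(z,y)\,dz = \int_\Rd L_x p_{t-s}^0(x,z)\bigl[\Psi_s(z,y)-\Psi_s(x,y)\bigr]\,dz + \Psi_s(x,y)\int_\Rd L_x p_{t-s}^0(x,z)\,dz.
\end{equation*}
In the first integral the H\"older estimate of Lemma~\ref{phol} contributes a factor $|z-x|^{\theta-\epsilon}$, which together with the $(t-s)^{1/\alpha}$-scale of $G_{t-s}^{(\alpha+\gamma)}$ upgrades the singularity to the integrable $(t-s)^{-1+(\theta-\epsilon)/\alpha}$. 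For the second integral, I use the identity $L_x p_{t-s}^0(x,z)=\Phi_{t-s}(x,z)-\partial_s p_{t-s}^0(x,z)$, which is just \eqref{Phi} rewritten with $\tau=t-s$: Lemma~\ref{derp} applied to the $\partial_s$-piece and \eqref{Phi10} (after rescaling $w=(z-x)(t-s)^{-1/\alpha}$) applied to the $\Phi$-piece together yield $|\int_\Rd L_x p_{t-s}^0(x,z)\,dz|\le c(t-s)^{-1+\theta'/\alpha}$ for some $\theta'>0$, again integrable on $[t/2,t]$. All these bounds extend uniformly to the truncations $L^{x,\delta}$ (which only restrict the domain of integration in $u$), so dominated convergence delivers \eqref{ltil} and in particular $p_t(\cdot,y)\in D(L)$.
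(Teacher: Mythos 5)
Your proposal is correct in substance and its skeleton coincides with the paper's: reduce to $f_t^y(x)=\int_0^t\int_\Rd p^0_{t-s}(x,z)\Psi_s(z,y)\,dz\,ds$, apply the truncated operator $L^{x,\delta}$, interchange by Fubini, subtract $\Psi_s(x,y)$ to borrow the H\"older regularity of Lemma~\ref{phol}, and let $\delta\to0$. Where you genuinely differ is in taming the $(t-s)^{-1}$ singularity of the remaining term $\Psi_s(x,y)\int_\Rd L_x p^0_{t-s}(x,z)\,dz$: the paper splits $L^{x,\delta}p^0_{t-s}(x,z)=L^{x,\delta}(p^{z}_{t-s}-p^{x}_{t-s})(z-x)+L^{x,\delta}p^{x}_{t-s}(z-x)$ and observes that the second piece integrates to $0$ in $z$ \emph{exactly, for every $\delta>0$}, while the first is controlled pointwise and uniformly in $\delta$ via Lemma~\ref{hol10} and Lemma~\ref{l:StGenerEst}; you instead invoke the identity $L_xp^0_\tau=\Phi_\tau+\partial_\tau p^0_\tau$ together with Lemma~\ref{derp} and \eqref{Phi10}. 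Your $[0,t/2]\cup[t/2,t]$ split is also a simplification: it decouples the $s\to0$ and $s\to t$ singularities, so you avoid the paper's delicate $J_1,J_2,K^{(i)}$ computations (which the paper needs anyway for the quantitative bounds of Corollary~\ref{rem-6-1}, but which are not required for \eqref{ltil} itself).

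The one step you should tighten is the closing claim that ``all these bounds extend uniformly to the truncations $L^{x,\delta}$ because they only restrict the domain of integration in $u$.'' That is true for the pointwise majorants, but the bound $\bigl|\int_\Rd L_x p^0_{t-s}(x,z)\,dz\bigr|\le c(t-s)^{-1+\theta'/\alpha}$ rests on a cancellation under the $z$-integral, and the identity $L_xp^0_\tau=\Phi_\tau+\partial_\tau p^0_\tau$ has no exact truncated analogue (both $\Phi$ and $\partial_\tau p^0_\tau=L^zp^z_\tau$ are defined through the full operators). The uniform-in-$\delta$ version does hold, but to see it you need precisely the paper's decomposition: $\int_\Rd L^{x,\delta}p^{x}_{t-s}(z-x)\,dz=0$ for every $\delta$ by translation invariance, and $\bigl|\int_\Rd L^{x,\delta}(p^{z}_{t-s}-p^{x}_{t-s})(z-x)\,dz\bigr|\le\int_\Rd\modgener(p^{z}_{t-s}-p^{x}_{t-s})(z-x)\,dz\le c(t-s)^{-1+\theta'/\alpha}$ uniformly in $\delta$ by Lemma~\ref{hol10}. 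With that substitution your argument is complete.
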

\begin{proof}
Since $p_t^0(\cdot,y)\in C^2_\infty(\rd)$, the term  $L_x p_t^0(x,y)$ is well defined. Using the representation of $L^{x,\delta} $, Lemma~\ref{th-ptx2} and Lemma~\ref{l:StGenerEst}, for every $\delta>0$
we get
\begin{equation*}\label{I0-est}
\begin{split}
  & |L^\delta_x p_t^0(x,y)|
 	  \leq  \int_{|u|>\delta} \left| p_t^0(x+u,y) - p_t^0(x,y) - \scalp{u}{\nabla_x p_t^0(x,y)}\indyk{ \{|u|\leq t^{1/\alpha}\} }\right|\, \nu(x,du)  \\
	& \leq  \modgener p_t^0(x,y) \leq c t^{-1} G_t^{(\alpha+\gamma)}(y-x).
\end{split}
\end{equation*}
Let us show that the function
\begin{equation}\label{fy}
  f_t^y(x):=\int_0^t\int_\rd  p_{t-s}^0(x,z)\Psi_s(z,y)\, dzds
\end{equation}
belongs to $D(L)$ and
$$
  L_x f_t^y(x)=\int_0^t\int_\rd L_x p_{t-s}^0(x,z)\Psi_s(z,y)\, dzds.
$$
For this end we use the definition \eqref{lxd0} of $L_x $. By \eqref{p0-psi}
for every $\delta>0$ we get
\begin{align*}
  \int_{|u|>\delta} \int_0^t \int _{\Rd}&
	\left|\left(p_{t-s}^0(x+u,z) - p_{t-s}^0(x,z)\right) \Psi_s(z,y) \right|\,
	dzds\nu(x,du)  \\
	&\leq c_1 \int_{|u|>\delta} ( G_t^{(\alpha+\gamma)}(y-x-u)+G_t^{(\alpha+\gamma)}(y-x) )\, \nu(x,du)\\
	& < c_2 t^{-d/\alpha} \nu_0(B(0,\delta)^c).
\end{align*}
By Fubini's theorem and the symmetry of $\nu$ we get
\begin{align*}
  L^{\delta} f_t^y(x)
	& = \int_0^t \int_\rd L^{\delta}_x p_{t-s}^0 (x,z) \Psi_s (z,y)\, dzds \\
  & = \int_0^t \int_\rd L^{\delta}_x p_{t-s}^0 (x,z) \big[\Psi_s (z,y)- \Psi_s(x,y)\big] \, dzds \\
  &   \quad + \int_0^t \int_\rd L^{\delta}_x p_{t-s}^0 (x,z)\Psi_s(x,y)\, dz ds \\
	& = \int_0^t \int_\rd L^{\delta}_x p_{t-s}^0 (x,z) \big[\Psi_s (z,y)- \Psi_s(x,y)\big] \, dzds \\
  &   \quad + \int_0^t \int_\rd L^{x,\delta} (p_{t-s}^z - p_{t-s}^x) (z-x) \Psi_s(x,y)\,dz ds \\
	&   \quad + \int_0^t \int_\rd L^{x,\delta} p_{t-s}^x (z-x)\Psi_s(x,y)\, dz ds
	     = I_1(\delta)+I_2(\delta) + I_3(\delta).
\end{align*}
Let us estimate the functions under the integrals $I_1(\delta)$ and $I_2(\delta)$.
 Using Lemma~\ref{phol} and \eqref{GH} for $T>0$ and $0<s<t\le T$ we get
\begin{align*}
  &\left| L^{\delta}_x p_{t-s}^0 (x,z) \big[\Psi_s (z,y)- \Psi_s(x,y)\big] \right|
	  \\
	&\leq c_1 (t-s)^{-1} G_{t-s}^{(\alpha+\gamma)}(z-x)
	   \big(|x-z|^{\theta-\epsilon}\wedge 1\big) s^{-1+\epsilon/\alpha}
  \big[ H_s^{(\gamma,\theta)}(y-z)+ H_s^{(\gamma,\theta)}(y-x)\big] \\
	& \leq  c_2 s^{-1+\epsilon/\alpha} (t-s)^{-1+(\theta-\epsilon)/\alpha}
	         H_{t-s}^{(\gamma,\theta-\epsilon)}(z-x)
	         \big[ H_s^{(\gamma,\theta)}(y-z)+ H_s^{(\gamma,\theta)}(y-x)\big]  \\
	         &=:  g_t^{(x,y)}(s,z),
\end{align*}
with $c_1,c_2>0$ depending on $T$. Using Proposition~\ref{sub-conv}  and  the inequality
$$H_{t-s}^{(\gamma,\theta-\epsilon)}(z-x) \leq (T^{\epsilon/\alpha}\vee 1) H_{t-s}^{(\gamma,\theta)}(z-x), \quad  0<s<t\le T,$$
 we obtain
\begin{align}
  \int_0^t \int_{\Rd} g_t^{(x,y)}(s,z)\, &dzds
	\leq  c_1 \int_0^t s^{-1+\epsilon/\alpha} (t-s)^{-1+(\theta-\epsilon)/\alpha} \notag
	\\
&\quad \cdot \big[ H_t^{(\gamma,\theta)}(y-x)+ H_s^{(\gamma,\theta)}(y-x)\big]\, ds  \leq  c_2 t^{-1+\theta/\alpha} H_t^{(\gamma,\theta)}(y-x) \label{I1del} \\
	&      +	c_3 \int_0^t s^{-1+(\epsilon-d)/\alpha} (t-s)^{-1+(\theta-\epsilon)/\alpha}
	            \left(1\vee \frac{|y-x|}{s^{1/\alpha}}\right)^{-\gamma-\alpha+\theta}\, ds \notag.
\end{align}
We need to carefully estimate the integral. We get
\begin{align*}
\Big( \int_0^{|x-y|^\alpha\wedge t} &+  \int_{|x-y|^\alpha\wedge t }^t \Big) s^{-1+(\epsilon-d)/\alpha} (t-s)^{-1+(\theta-\epsilon)/\alpha}
	            \left(1\vee \frac{|y-x|}{s^{1/\alpha}}\right)^{-\gamma-\alpha+\theta}\, ds\\
&= J_1+J_2.
\end{align*}
For $J_1$  after changing variables we get
\begin{align*}
J_1&=  t^{-1+ \frac{\gamma+\alpha-d}{\alpha}}  |x-y|^{-\alpha-\gamma+\theta} \int_0^{\frac{|x-y|^\alpha}{t}\wedge 1} \tau^{-1+\frac{\epsilon-d-\theta+\alpha+\gamma}{\alpha}}(1-\tau)^{-1+\frac{\theta-\epsilon}{\alpha}}d\tau.
\end{align*}
Treating two cases $|x-y|\leq (t/2)^{1/\alpha} $ and $|x-y|>  (t/2)^{1/\alpha}$   separately we get
\begin{align*}
  J_1 & \leq C t^{-1+ \frac{\gamma+\alpha-d}{\alpha}}  |x-y|^{-\alpha-\gamma+\theta}
	      \Big( \Big( \frac{|x-y|^\alpha}{t}\Big)^{\frac{\epsilon-d-\theta+\alpha+\gamma}{\alpha}}
				\I_{\{|x-y|\leq t^{1/\alpha}\}}+\I_{\{|x-y|> t^{1/\alpha}\}}\Big)  \\
      &  = C t^{-1+\frac{\theta}{\alpha}}\Big(\frac{t^{ \frac{-\epsilon}{\alpha}}}{|x-y|^{d-\epsilon}}
			  \I_{\{|x-y|\leq t^{1/\alpha}\}} +\frac{t^{ \frac{\gamma+\alpha-d-\theta}{\alpha}}  }{|x-y|^{\alpha+\gamma-\theta}} \I_{\{|x-y|> t^{1/\alpha}\}}\Big)  \\
      & =: C t^{-1+\frac{\theta}{\alpha}}K_t^{(1)}(x,y).
\end{align*}
For $J_2$ we have
\begin{align*}
 \int_{|x-y|^\alpha }^t s^{-1+(\epsilon-d)/\alpha} &(t-s)^{-1+(\theta-\epsilon)/\alpha} \I_{\{ |x-y|\leq s^{1/\alpha}\}}ds\\
	&    \leq   C t^{-1+ \frac{\theta-d}{\alpha}} \I_{\{ |x-y|\leq t^{1/\alpha}\}}
=: C t^{-1+\theta/\alpha} K_t^{(2)}(x,y).
\end{align*}
 Thus,
\begin{equation*}\label{I1-est}
I_1(\delta)\leq  C   t^{-1+\theta/\alpha} \big[H_t^{(\gamma,\theta)}(y-x) + K_t^{(1)}(x,y)+ K_t^{(2)}(x,y)\big].
\end{equation*}
For later convenience we note that
\begin{equation*}\label{K12}
\int_\rd [H_t^{(\gamma,\theta)}(y-x)+K_t^{(1)}(x,y)+ K_t^{(2)}(x,y)] dy \leq C, \quad x\in \rd, \  t\in (0,T].
\end{equation*}
To estimate the integrand in $I_2(\delta)$ we use Lemma~\ref{hol10}, Lemma~\ref{l:StGenerEst}, \eqref{Psi1} and \eqref{GH}:
\begin{align*}
  \Big| L^{x,\delta} &(p_{t-s}^z - p_{t-s}^x) (z-x)\, \Psi_s(x,y) \Big|
	 \leq  \mathcal{A}_{t-s}^\# (p_{t-s}^z - p_{t-s}^x) (z-x) \, \Psi_s(x,y) \\
	& \leq c_1 (|z-x|^\eta \wedge 1) (t-s)^{-1} G_{t-s}^{\alpha+\gamma-\theta}(z-x) s^{-1+\theta/\alpha}H_s^{(\gamma,\theta)}(y-x) \\
	& \leq c_2 s^{-1+\theta/\alpha} (t-s)^{-1+\theta/\alpha} H_{t-s}^{(\gamma-\theta,\theta)}(z-x) H_s^{(\gamma,\theta)}(y-x)  =:  h_t^{(x,y)}(s,z).
\end{align*}
Using Proposition~\ref{sub-conv} and  the same argument as for estimating  \eqref{I1del},  we get
\begin{align*}
  \int_0^t \int_{\Rd}& h_t^{(x,y)}(s,z)\, dz ds
	 \leq c_1 \int_0^t s^{-1+\theta/\alpha} (t-s)^{-1+\theta/\alpha} H_s^{(\gamma,\theta)}(y-x) \, ds \label{del2-1}\\
	& \leq  c_1 \int_0^t s^{-1+(\theta-d)/\alpha} (t-s)^{-1+\theta/\alpha}
	         \left(1 \vee \frac{|y-x|}{s^{1/\alpha}}\right)^{-\gamma-\alpha+\theta} \, ds \notag\\
&\leq c_3 t^{-1+\theta/\alpha}  K^{(3)}_t(x,y),
\end{align*}
where
\begin{equation*}\label{K3}
K_t^{(3)}(x,y)= \Big(t^{\frac{\theta-d}{\alpha}} + \frac{1}{|x-y|^{d-\theta}}\Big)\I_{\{|x-y|\leq t^{1/\alpha}\}} +
 \frac{t^{\frac{\gamma+\alpha-d}{\alpha}}}{|x-y|^{\alpha+\gamma-\theta}}\I_{\{|x-y|> t^{1/\alpha}\}}.
 \end{equation*}
 We also observe that
\begin{align*}
  \int_\rd  K_t^{(3)}(x,y) dy \leq  C t^{\theta/\alpha}, \quad x\in \rd,\ t\in (0,T].
\end{align*}
We get
\begin{equation*} \label{I2-est}
I_2(\delta)\leq  C t^{-1+\theta/\alpha}  K^{(3)}_t(x,y),\quad x,y\in \rd,\ t\in (0,T].
\end{equation*}
Furthermore, we have
\begin{equation*}\label{L-10}
\begin{split}
   &\int_{\Rd} \int_{|u|>\delta} \left| p_{t-s}^x (z-x+u)-p_{t-s}^x(z-x) \right| \, \nu(x,du) dz  \\
	& \leq  \int_{|u|>\delta} \int_{\Rd} \left( G_{t-s}^{(\alpha+\gamma)}(z-x+u) + G_{t-s}^{(\alpha+\gamma)}(z-x) \right) \, dz \nu(x,du) \\
	& \leq   c \nu_0(B(0,\delta)^c),
\end{split}
\end{equation*}
hence by Fubini's theorem we get for every $\delta>0$ that
\begin{equation*}\label{I3-est}
\begin{split}
  I_3(\delta)
	 & =   \int_0^t \int_{|u|>\delta} \left[ \int_{\Rd} \left( p_{t-s}^x (z-x+u)-p_{t-s}^x(z-x) \right)\, dz \right] \, \nu(x,du)\Psi_s(x,y)\, ds \\
	 & =   \int_0^t \int_{|u|>\delta} (1-1) \, \nu(x,du) \Psi_s(x,y)\, ds = 0.
\end{split}
\end{equation*}
Thus, by the dominated convergence theorem
\begin{eqnarray*}
  \lim_{\delta\to 0^+} L^{\delta} f_t^y(x)
	&   =  & \lim_{\delta\to 0^+} (I_1(\delta) + I_2(\delta) + I_3(\delta)) \\
	&   =  & \int_0^t \int_\rd L_x p_{t-s}^0 (x,z) \big[\Psi_s (z,y)- \Psi_s(x,y)\big] \, dz ds \\
	&      & +\,\, \int_0^t \int_\rd L^{x}  (p_{t-s}^z - p_{t-s}^x) (z-x) \,dz\Psi_s(x,y)\,ds, \\
	&   =  & \int_0^t \int_\rd \left[ L_x p_{t-s}^0 (x,z)\Psi_s (z,y)
	         - L^{x} p_{t-s}^x (z-x)\Psi_s(x,y) \right] \,dz ds.
\end{eqnarray*}
By Corollary~\ref{Lanihil} we also have
$  \int_{\Rd} L^{x} p_{t-s}^x (z-x)\, dz = 0$,
and \eqref{ltil} follows for every $t\in (0,T]$; since $T$ is arbitrary it holds for
every $t>0$.
\end{proof}
\begin{corollary}\label{rem-6-1}
There is a kernel $K_t(x,y)\ge 0$ and for every $T>0$ there is a constant $C$ such that $\int_\rd K_t(x,y)dy\leq C$
for all $t\in (0,T]$, $x\in \Rd$, and
\begin{equation}\label{L3-est}
\begin{split}
  |L_x^\delta p_t(x,y)| \leq  Ct^{-1} \Big(G_t^{\alpha+\gamma}(y-x) + t^{\theta/\alpha}  K_t(x,y)\Big),
\end{split}
\end{equation}
for all $t\in (0,T]$, $x,y\in \Rd$, $\delta>0$.
Furthermore,
\begin{equation}\label{Ldel1}
\left|\int_\rd L_x^\delta p_t(x,y)dy\right|\leq C t^{-1+(\eta\wedge \theta)/\alpha}, \quad t\in (0,T], \ x\in \Rd, \ \delta>0.
\end{equation}
\end{corollary}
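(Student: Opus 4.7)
The plan is to reuse the three-piece decomposition developed inside the proof of Lemma~\ref{lem-ltil}:
$$L_x^\delta p_t(x,y) = L_x^\delta p_t^0(x,y) + L^\delta f_t^y(x), \qquad L^\delta f_t^y(x) = I_1(\delta) + I_2(\delta) + I_3(\delta),$$
where $f_t^y$ is from \eqref{fy} and $I_3(\delta)\equiv 0$. All the pointwise bounds on the right-hand pieces---namely $|L_x^\delta p_t^0(x,y)| \leq c\, t^{-1} G_t^{(\alpha+\gamma)}(y-x)$ via Lemma~\ref{l:StGenerEst}, $|I_1(\delta)| \leq C t^{-1+\theta/\alpha}[H_t^{(\gamma,\theta)}(y-x) + K_t^{(1)}(x,y) + K_t^{(2)}(x,y)]$ and $|I_2(\delta)| \leq C t^{-1+\theta/\alpha} K_t^{(3)}(x,y)$---have been derived there uniformly in $\delta>0$. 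Setting $K_t(x,y) := H_t^{(\gamma,\theta)}(y-x) + K_t^{(1)}(x,y) + K_t^{(2)}(x,y) + K_t^{(3)}(x,y)$ yields \eqref{L3-est}, while the integrability statement $\int_\rd K_t(x,y)\,dy \leq C$ for $t\in (0,T]$ follows directly by summing the four $L^1(dy)$-bounds recorded in the proof of Lemma~\ref{lem-ltil}.

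For the integral bound \eqref{Ldel1} I would integrate each piece in $y$. The $I_1(\delta) + I_2(\delta)$ contribution gives at most $Ct^{-1+\theta/\alpha}$ since each of the four kernels above integrates in $y$ to a finite constant or to at most $Ct^{\theta/\alpha}$. The $p_t^0$ piece is the delicate one; I would split
$$L_x^\delta p_t^0(x,y) = L_x^\delta p_t^x(y-x) + L_x^\delta [p_t^y - p_t^x](y-x).$$
By Fubini (justified since $p_t^x\in L^1(\rd)$ and $\nu(x,B(0,\delta)^c)<\infty$), the symmetric form of $L^{x,\delta}$ applied to the probability density $p_t^x$ produces $\int_\rd L_x^\delta p_t^x(y-x)\,dy = \tfrac{1}{2}\int_{|u|>\delta}[1+1-2]\,\nu(x,du) = 0$. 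For the H\"older remainder I would apply Lemma~\ref{hol10} (with some auxiliary $\theta' \in (0,\eta \wedge (\alpha+\gamma-d))$) to get $|\partial^\beta_z(p_t^y - p_t^x)(z)| \leq c(|y-x|^\eta \wedge 1)\, t^{-|\beta|/\alpha}\, G_t^{(\alpha+\gamma-\theta')}(z)$, and then Lemma~\ref{l:StGenerEst} with $\zeta=d$, $\kappa=\theta'$ to produce $|L_x^\delta[p_t^y-p_t^x](y-x)| \leq c(|y-x|^\eta\wedge 1)\,t^{-1}\,G_t^{(\alpha+\gamma-\theta')}(y-x)$ uniformly in $\delta$. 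Integrating in $y$ and picking $\theta'$ small enough so that $\alpha+\gamma-\theta'-d > \theta$ (which is possible since $\theta < \alpha+\gamma-d$ by \eqref{thet}) yields a factor $\leq C t^{(\eta\wedge\theta)/\alpha}$, closing \eqref{Ldel1}.

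The principal obstacle is obtaining the cancellation $\int L_x^\delta p_t^x(y-x)\,dy = 0$ \emph{uniformly in} $\delta>0$: one cannot simply pass to the limit $\delta\to 0$ and invoke Corollary~\ref{Lanihil} or Lemma~\ref{derp}, since the corollary must hold for every $\delta>0$. The crucial observation is that the symmetric averaging form of $L^{x,\delta}$, combined with translation invariance of $dy$, reduces the contribution of each jump $u$ to $\tfrac12\bigl[\int p_t^x(y-x+u)\,dy + \int p_t^x(y-x-u)\,dy - 2\int p_t^x(y-x)\,dy\bigr]=0$. A secondary subtlety is the calibration of $\theta'$, which must simultaneously satisfy the range required by Lemma~\ref{l:StGenerEst} and be small enough to force the $G_t^{(\alpha+\gamma-\theta')}$-weighted integral of $(|y-x|^\eta\wedge 1)$ to produce the exponent $(\eta\wedge\theta)/\alpha$; this is resolved by choosing $\theta' < (\alpha+\gamma-d-\theta)\wedge \eta\wedge(\alpha+\gamma-d)$.
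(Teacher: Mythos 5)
Your proposal is correct and follows essentially the same route as the paper: the bound \eqref{L3-est} is obtained by recycling the kernels $H^{(\gamma,\theta)},K^{(1)},K^{(2)},K^{(3)}$ and their $L^1(dy)$ bounds from the proof of Lemma~\ref{lem-ltil}, and \eqref{Ldel1} is proved by splitting $L_x^\delta p_t^0(x,y)$ into $L_x^\delta p_t^x(y-x)$ (which integrates to zero in $y$ by Fubini and the symmetric form of $L^{x,\delta}$) plus the H\"older remainder $L_x^\delta[p_t^y-p_t^x](y-x)$, controlled via Lemma~\ref{hol10} and Lemma~\ref{l:StGenerEst}. The only cosmetic difference is that the paper absorbs the factor $|y-x|^\eta$ into the exponent of $G_t$ before applying Lemma~\ref{l:StGenerEst} (with $\kappa=\theta+\eta$), whereas you keep $(|y-x|^\eta\wedge 1)$ and integrate it against $G_t^{(\alpha+\gamma-\theta')}$ afterwards; both yield the same exponent $t^{-1+(\eta\wedge\theta)/\alpha}$.
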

\begin{proof}
Let $K=K^{(1)}+K^{(2)}+K^{(3)}+H^{(\gamma,\theta)}$, where the terms on the right-hand side are as in
the proof of Lemma~\ref{lem-ltil}. This gives \eqref{L3-est}.
 To prove \eqref{Ldel1} we consider
\begin{align*}
\Big| \int_\rd L_x^\delta p_t^0(x,y)\, dy\Big|  &\leq  \Big| \int_\rd  L_x^\delta [ p_t^y(y-x)- p_t^x(y-x)] dy \Big| +\Big|  \int_\rd L_x^\delta p_t^x(y-x)dy\Big|.
\end{align*}
The last integral is $0$. To estimate the first integral we use the H\"older continuity of $p_x^z(y-x)$ in $z$, i.e. \eqref{hol10-eq} with $\beta=0$ and $|\beta|=2$:
\begin{align*}
\big| p_t^y(y-x)- p_t^x(y-x)\big| &\leq c |x-y|^\eta t^{-|\beta|/\alpha} G_t^{(\alpha+\gamma-\theta)} (y-x)
\\
&\leq  c t^{(|\beta|-\eta)/\alpha} G_t^{(\alpha+\gamma-\theta-\eta)} (y-x).
\end{align*}
Applying Lemma~\ref{l:StGenerEst} with $\kappa=\theta+\eta$,
$\zeta=d$, we get
$$
\int_\rd \big|L_x^\delta [ p_t^y(y-x)- p_t^x(y-x)] \big| dy  \leq  C t^{-1+\eta/\alpha} \int_\rd G_t^{(\alpha+\gamma-\theta-\eta)} (y-x)dy\leq  C t^{-1+\eta/\alpha}.
$$
The proof is complete.
\end{proof}

We next show how to differentiate $(p^0\boxtimes \Psi)_t(x,y)$ in $t$.
\begin{lemma}\label{der2} For $x,y\in\Rd$, $0<s<t$, $0<t\le T$, we have
\begin{align}
\label{dif1}
&  \partial_t \int_\rd p_{t-s}^0(x,z)\Psi_s(z,y)\, dz=\int_\rd   \partial_t p_{t-s}^0(x,z)\Psi_s(z,y)\, dz,\\
\label{dif21}
  &\int_s^t \left| \int_\rd \partial_r p_{r-s}^0(x,z)\Psi_s (z,y) \, dz \right|\, dr<\infty,\\
&\label{dif2}
  \int_0^t \int_0^r \left| \int_\rd \partial_r p_{r-s}^0(x,z)\Psi_s (z,y) \, dz \right|\, dsdr<\infty.
\end{align}
\end{lemma}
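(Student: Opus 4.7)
The plan is to prove all three claims by performing a single splitting that exploits the cancellation $\int_\rd \partial_r p_{r-s}^0(x,z)\,dz = O((r-s)^{-1+\theta/\alpha})$ from Lemma \ref{derp} against the $x$-H\"older continuity of $\Psi_s$ from Lemma \ref{phol}. I would fix $T>0$ containing all times under consideration and pick $\epsilon\in(0,\theta)$.

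For \eqref{dif1}, first fix $0<s<t$ and restrict $r$ to a compact interval $[t-h_0,t+h_0]$ with $h_0<(t-s)/2$, so that $r-s\ge(t-s)/2$ uniformly. By Lemma \ref{l:dertxpest} with $|\beta|=0$, $|\partial_r p_{r-s}^0(x,z)|\le c(r-s)^{-1}G_{r-s}^{(\alpha+\gamma)}(z-x)$, which on this range of $r$ is dominated uniformly (in $r$) by a constant multiple of $(1\vee|z-x|)^{-\alpha-\gamma}$. Multiplied by $|\Psi_s(z,y)|\le c s^{-1+\theta/\alpha}e^{cs}H_s^{(\gamma,\theta)}(y-z)$ from \eqref{Psi1}, this yields a $z$-integrable dominating function, so dominated convergence applied to the difference quotient gives \eqref{dif1}.

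The core work is \eqref{dif21}. I would perform the splitting
\begin{equation*}
\int_\rd \partial_r p_{r-s}^0(x,z)\Psi_s(z,y)\,dz
= A(r,s)+\Psi_s(x,y)\int_\rd \partial_r p_{r-s}^0(x,z)\,dz,
\end{equation*}
with $A(r,s):=\int_\rd \partial_r p_{r-s}^0(x,z)[\Psi_s(z,y)-\Psi_s(x,y)]\,dz$. For the second term Lemma \ref{derp} (applied with $t=r-s$) and estimate \eqref{Psi1} give the bound $c s^{-1+\theta/\alpha}H_s^{(\gamma,\theta)}(y-x)(r-s)^{-1+\theta/\alpha}$, whose $r$-integral over $(s,t)$ is a convergent beta integral. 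For $A(r,s)$ I would combine the pointwise derivative estimate from Lemma \ref{l:dertxpest} with the H\"older bound from Lemma \ref{phol}, then use the inequality $(|z-x|^{\theta-\epsilon}\wedge 1)(r-s)^{-1}G_{r-s}^{(\alpha+\gamma)}(z-x)\le c(r-s)^{-1+(\theta-\epsilon)/\alpha}H_{r-s}^{(\gamma,\theta-\epsilon)}(z-x)$, absorb $H_{r-s}^{(\gamma,\theta-\epsilon)}$ into $H_{r-s}^{(\gamma,\theta)}$ up to the constant $T^{\epsilon/\alpha}\vee 1$, and apply the sub-convolution property of $H^{(\gamma,\theta)}$ (Proposition \ref{sub-conv}) in $z$ --- exactly as in the estimate of $I_1(\delta)$ in the proof of Lemma \ref{lem-ltil}. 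This produces $|A(r,s)|\le Cs^{-1+\epsilon/\alpha}(r-s)^{-1+(\theta-\epsilon)/\alpha}[H_r^{(\gamma,\theta)}(y-x)+H_s^{(\gamma,\theta)}(y-x)]$, again $r$-integrable on $(s,t)$ since $\theta-\epsilon>0$.

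For \eqref{dif2}, the estimates from the proof of \eqref{dif21} are uniform enough in $s$ to admit the additional integration in $s$ over $(0,r)$: since $\epsilon,\theta<\alpha$ the factors $s^{-1+\theta/\alpha}$ and $s^{-1+\epsilon/\alpha}$ are integrable at $s=0$, and off the diagonal $y\ne x$ the kernel $H_s^{(\gamma,\theta)}(y-x)$ decays like a positive power of $s$ as $s\to 0$ (because $\alpha+\gamma>d$), providing extra integrable behavior; Fubini's theorem together with standard beta-type computations then closes the argument. The main technical obstacle is bookkeeping the exponents in the estimate of $A(r,s)$: one must choose $\epsilon\in(0,\theta)$ so that, after applying the sub-convolution, the resulting integrands carry strictly integrable singularities in both $s$ and $r-s$ simultaneously. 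This is a direct parallel of the treatment of $I_1(\delta)$ in Lemma \ref{lem-ltil}, with $L_x$ there replaced by $\partial_r$ here.
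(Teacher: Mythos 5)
Your proposal follows essentially the same route as the paper's proof: the paper proves \eqref{dif1} by the same dominated-convergence argument (with the dominating function $c(t-\varepsilon_0-s)^{-1-d/\alpha}s^{-1+\theta/\alpha}e^{cs}H_s^{(\gamma,\theta)}(z-y)$, which differs only in that it discards the spatial decay of $G_{r-s}$ that you retain), and proves \eqref{dif2} (hence also \eqref{dif21}) by precisely the decomposition you describe — writing $\int_\rd \partial_r p_{r-s}^0(x,z)\Psi_s(z,y)\,dz$ as the centered term $\int_\rd \partial_r p_{r-s}^0(x,z)\big[\Psi_s(z,y)-\Psi_s(x,y)\big]\,dz$ plus $\Psi_s(x,y)\int_\rd \partial_r p_{r-s}^0(x,z)\,dz$, then controlling the second term via Lemma~\ref{derp} together with \eqref{Psi1}, and the first via Lemma~\ref{l:dertxpest}, the H\"older bound of Lemma~\ref{phol}, the inequality \eqref{GH}, and the sub-convolution property of $H^{(\gamma,\theta)}$ (Proposition~\ref{sub-conv}), with a subsequent $s,r$ integration using the bound $H_t^{(\gamma,\theta)}(x)\le c\,|x|^{-\alpha-\gamma+\theta}t^{1+(\gamma-\theta-d)/\alpha}$ to close the beta integrals. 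Your sketch matches this structure step for step.
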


\begin{proof}
In order to prove \eqref{dif1} it suffices to show that for all fixed $t>s>0$ and $x,y\in\Rd$ there is $\varepsilon_0>0$ and
a function
$g(z)\ge 0$ such that $\int_{\Rd} g(z)\, dz < \infty $ and
$$
  \left| \partial_t p_{t+\varepsilon-s}^0(x,z)\Psi_s(z,y) \right| \leq g(z), \quad z\in\Rd,\varepsilon\in (-\varepsilon_0, \varepsilon_0).
$$
Using Lemma~\ref{l:dertxpest} and \eqref{Psi1}, for every $\varepsilon_0<t-s$ we get
\begin{align*}
         \big| \partial_t &p_{t+\varepsilon-s}^0(x,z) \Psi_s(z,y)  \big|
          \leq c_1 (t+\varepsilon-s)^{-1} G_{t+\varepsilon-s}^{(\alpha+\gamma)}(z-x) s^{-1+\theta/\alpha}
				        e^{c_2 s} H_s^{(\gamma,\theta)}(z-y) \\
         & \leq c_1 (t-\varepsilon_0-s)^{-1-d/\alpha} s^{-1+\theta/\alpha} e^{c_2 s} H_s^{(\gamma,\theta)}(z-y) : = g(z),
\end{align*}
and the finiteness of $\int_{\Rd} g(z)\, dz$ follows from Proposition~\ref{sub-conv}.

The integral in \eqref{dif2} is not bigger than
\begin{align*}
&\int_0^t \int_0^r \left| \int_\rd \partial_r  p_{r-s}^0(x,z)(\Psi_s (z,y)-\Psi_s(x,y))\, dz \right| \, ds dr\\
  & + \int_0^t \int_0^r \left| \int_\rd \partial_r p_{r-s}^0(x,z)\, dz \right| |\Psi_s (x,y)| \,ds dr
	  = I_1+I_2.
\end{align*}
For $I_1$ by Lemma~\ref{l:dertxpest}, Lemma~\ref{phol}, \eqref{GH} and Proposition~\ref{sub-conv} we get that for every $\epsilon\in (0,\theta)$ and
$\theta\in (0,\eta\wedge \alpha \wedge (\gamma-d+\alpha))$,
\begin{eqnarray*}
  I_1
	& \leq & c_1 \int_0^t \int_0^r \int_\rd (r-s)^{-1} G_{r-s}^{(\alpha+\gamma)}(z-x)
	         \big( |x-z|^{\theta-\epsilon}\wedge 1\big) s^{-1+\epsilon/\alpha} \\
  &      & \cdot \big[H_s^{(\gamma,\theta)}(y-z)+ H_s^{(\gamma,\theta)}(y-x)\big]\, dz\,ds\, dr \\
  & \leq & c_1 \int_0^t \int_0^r \int_\rd (r-s)^{-1+(\theta-\epsilon)/\alpha} s^{-1+\epsilon/\alpha}
	         H_{r-s}^{(\gamma, \theta-\epsilon)}(z-x)\\
  &      & \cdot \big[H_s^{(\gamma,\theta)}(y-z)+ H_s^{(\gamma,\theta)}(y-x)\big]\, dz\,ds\, dr \\
	& \leq & c_2 \int_0^t \int_0^r \int_\rd (r-s)^{-1+(\theta-\epsilon)/\alpha} s^{-1+\epsilon/\alpha}
	         H_{r-s}^{(\gamma, \theta)}(z-x)\\
  &      & \cdot \big[H_s^{(\gamma,\theta)}(y-z)+ H_s^{(\gamma,\theta)}(y-x)\big]\, dz\,ds\, dr.
	         	\end{eqnarray*}
Further,
	         \begin{eqnarray*}	
	I_1
	& \leq & c_3 \int_0^t \int_0^r (r-s)^{-1+(\theta-\epsilon)/\alpha} s^{-1+\epsilon/\alpha}
	         \big[ H_{r}^{(\gamma, \theta)}(y-x)+ H_s^{(\gamma,\theta)}(y-x) \big] \, dsdr\\
	         &=  & c_3 \int_0^t \int_s^t (r-s)^{-1+(\theta-\epsilon)/\alpha} s^{-1+\epsilon/\alpha}
	         \big[ H_{r}^{(\gamma, \theta)}(y-x)+ H_s^{(\gamma,\theta)}(y-x) \big] \, drds\\
  &   =  & c_4 \Big[
	         \int_0^t \int_s^t (r-s)^{-1+(\theta-\epsilon)/\alpha} s^{-1+\epsilon/\alpha}
	         H_{r}^{(\gamma, \theta)}(y-x) \, drds  \\
	&      & \,\, + \int_0^t (t-s)^{(\theta-\epsilon)/\alpha}s^{-1+\epsilon/\alpha}H_s^{(\gamma,\theta)}(y-x)\, ds \Big].
	\end{eqnarray*}
By the estimate $H_t^{(\gamma,\theta)}(x)\leq c |x|^{-\alpha-\gamma+\theta} t^{1+(\gamma-\theta-d)/\alpha}$ we obtain
\begin{align*}
  I_1
	& \leq  c_4 \Big[  \int_0^t \int_s^t (r-s)^{-1+(\theta-\epsilon)/\alpha} s^{-1+\epsilon/\alpha}
	         r^{-d/\alpha}\left(\frac{|y-x|}{r^{1/\alpha}}\right)^{-\gamma-\alpha+\theta} \, drds \\
	&     \quad   + 	\int_0^t (t-s)^{(\theta-\epsilon)/\alpha}s^{-1+(\epsilon-d)/\alpha}\left(\frac{|y-x|}{s^{1/\alpha}}\right)^{-\gamma-\alpha+\theta}\, ds
					\Big]\\
  &   =   c_5 |y-x|^{-\alpha-\gamma+\theta} \Big[  t^ {1+(\gamma-d-\theta)/\alpha} \int_0^t (t-s)^{(\theta-\epsilon)/\alpha}s^{-1+\epsilon/\alpha}\, ds
	        \\
&\quad  +  \int_0^t (t-s)^{(\theta-\epsilon)/\alpha}s^{(\gamma-\theta+\epsilon-d)/\alpha}\, ds \Big]
	  \leq   c_6 |y-x|^{-\alpha-\gamma+\theta} t^{1+(\gamma-d)/\alpha}.
\end{align*}
We note that the constants $c_i$ in this proof may depend on $T$.
For $I_2$ by Lemma~\ref{derp}, \eqref{Psi1} and \eqref{GH} for $\theta\in (0,\eta\wedge \frac{\alpha+\gamma-d}{2})$ we get similarly
\begin{align*}
  I_2
	& \leq c_1 \int_0^t \int_0^r (r-s)^{-1+\theta/\alpha} |\Psi_s (x,y)| \, ds dr \\
  & \leq c_2 \int_0^t \int_0^r (r-s)^{-1+\theta/\alpha} s^{-1+\theta/\alpha} H_s^{(\gamma,\theta)} (y-x) \, ds dr  \\
	&   =  c_2 \int_0^t \int_s^t (r-s)^{-1+\theta/\alpha} s^{-1+\theta/\alpha} H_s^{(\gamma,\theta)} (y-x) \, dr ds  \\
  & \leq c_3 |y-x|^{-\alpha-\gamma+\theta} \int_0^t (t-s)^{\theta/\alpha} s^{(\gamma-d)/\alpha}ds
     = c_4 |y-x|^{-\alpha-\gamma+\theta} t^{1+(\theta+\gamma-d)/\alpha},
\end{align*}
because we assumed $\gamma-d+\alpha>0$.
This yields \eqref{dif21} and \eqref{dif2}.
\end{proof}

\subsection{Proof of Theorem~\ref{t-exist}}
By \eqref{r}, Lemma~\ref{th-ptx2}, \eqref{e:small}, Lemma~\ref{delta} and \eqref{H30} we obtain \eqref{f-sol}.
We next verify \eqref{eq:pofs}.
Using Lemma~\ref{der2},  \ref{phol} and \ref{delta} we get
$$
  \int_s^t \Big[\partial_r \int_\rd p_{r-s}^0(x,z)\Psi_s(z,y)\,dz\Big]\,dr
	 = \int_\rd p_{t-s}^0(x,z)\Psi_s(z,y)\, dz- \Psi_s(x,y).
$$
Integrating the above equation from $0$ to $t$ and using Lemma~\ref{der2}
and Fubini's theorem we obtain
\begin{equation*}
  \int_0^t \int_\rd p_{t-s}^0(x,z)\Psi_s(z,y)\, dzds - \int_0^t \!\! \Psi_s(x,y)\, ds
  \! = \!\! \int_0^t \int_0^r \int_\rd  \!\! \partial_r  p_{r-s}^0(x,z)\Psi_s (z,y)\, dz dsdr.
\end{equation*}
By Corollary~\ref{Phik_t_cont} and the locally uniform convergence of the series defining $\Psi_t$ the function $t\to \Psi_t(x,y)$ is continuous, therefore
\begin{equation}\label{par10}
\partial_t p_t(x,y)=  \partial_t p^0_t(x,y)+ \Psi_t(x,y)+ \int_0^t \int_\rd \partial_t p_{t-s}^0(x,z)\Psi_s(z,y)\, dzds.
\end{equation}
Subtracting $L_x p_t(x,y)$ from both sides  and using Lemma~\ref{lem-ltil} we get
\begin{align*}
\big(\partial_t -L_x\big)  p_t(x,y)&= -\Phi_t(x,y)+ \Psi_t(x,y)- \int_0^t \int_\rd\Phi_{t-s}(x,z)\Psi_s(z,y)\, dzds=0.
\end{align*}
The proof of Theorem~\ref{t-exist} is complete.\qed{}

\section{
Further regularity
}\label{time}

\subsection{Time derivatives of $\Psi_t(x,y)$}

In view of the definition of $p_t(x,y)$, to study its regularity in time we begin with an auxiliary estimate of the time derivative of $\Psi_t(x,y)$.

\begin{lemma}\label{phi-der}
The function $\Psi_t(x,y)$ is differentiable in $t$ and $\partial_t \Psi_t(x,y)$ is  continuous 
 on $(0,\infty)$. There are $C,c>0$ and $\theta\in(0,\alpha\wedge \eta \wedge (\alpha+\gamma-d))$ such that
\begin{equation}\label{phi-der10}
  \big|\partial_t \Psi_t(x,y)\big|
  \leq C e^{ct} t^{-2+\theta/\alpha} H_t^{(\gamma,\theta)}(y-x), \quad x,y\in\Rd,\, t>0.
\end{equation}
\end{lemma}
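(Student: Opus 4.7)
My plan is to differentiate the series
$\Psi_t(x,y)=\sum_{k\ge 1}\Phi^{\boxtimes k}_t(x,y)$ term by term, obtaining a uniform bound for $\partial_t\Phi^{\boxtimes k}_t$ by induction on $k$. The key device is the symmetric splitting at $s=t/2$ already used in Corollary~\ref{Phik_t_cont} to establish continuity. For $k\ge 2$, using associativity of $\boxtimes$ together with the change of variables $u=t-s$ on $[t/2,t]$, I will rewrite
\begin{equation*}
\Phi^{\boxtimes k}_t(x,y)=\int_0^{t/2}\!\!\int_{\Rd}\Phi_{t-s}(x,z)\,\Phi^{\boxtimes(k-1)}_s(z,y)\,dz\,ds+\int_0^{t/2}\!\!\int_{\Rd}\Phi^{\boxtimes(k-1)}_{t-u}(x,z)\,\Phi_u(z,y)\,dz\,du.
\end{equation*}
In each integrand, one factor is evaluated at a time lying in $[t/2,t]$, where either Lemma~\ref{Phi-up} (for $k=2$) or the inductive bound (for $k\ge 3$) gives pointwise control of the time-derivative; the other factor is at most of order $s^{-1+(k-1)\theta/\alpha}$ (resp.\ $u^{-1+\theta/\alpha}$) at the origin, hence integrable. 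Dominated convergence and the Leibniz rule therefore yield
\begin{align*}
\partial_t\Phi^{\boxtimes k}_t(x,y) &=\tfrac{1}{2}\!\int_{\Rd}\!\Phi_{t/2}(x,z)\,\Phi^{\boxtimes(k-1)}_{t/2}(z,y)\,dz+\tfrac{1}{2}\!\int_{\Rd}\!\Phi^{\boxtimes(k-1)}_{t/2}(x,z)\,\Phi_{t/2}(z,y)\,dz\\
&\quad+\int_0^{t/2}\!\!\int_{\Rd}\!\partial_t\Phi_{t-s}(x,z)\,\Phi^{\boxtimes(k-1)}_s(z,y)\,dz\,ds\\
&\quad+\int_0^{t/2}\!\!\int_{\Rd}\!\partial_t\Phi^{\boxtimes(k-1)}_{t-u}(x,z)\,\Phi_u(z,y)\,dz\,du.
\end{align*}

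\textbf{Induction estimate.} For $k=1$, Lemma~\ref{Phi-up} and \eqref{GH} give $|\partial_t\Phi_t|\le C_\Phi t^{-2+\theta/\alpha}H_t^{(\gamma,\theta)}(y-x)$. For $k\ge 2$, assuming the bound at level $k-1$, the two boundary integrals are estimated by the pointwise bounds of Lemma~\ref{Phi-up} and Lemma~\ref{Phi-l20} combined with the subconvolution property (Proposition~\ref{sub-conv}); both are of order $t^{-2+k\theta/\alpha}H_t^{(\gamma,\theta)}(y-x)$. The third term is controlled using $(t-s)^{-2+\theta/\alpha}\le 2^{2-\theta/\alpha}t^{-2+\theta/\alpha}$ on $s\in(0,t/2)$ and again the subconvolution; the fourth term uses the inductive bound and the analogous majorization on $[0,t/2]$. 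The remaining one-dimensional integrals are of Beta type and, as in the proof of Lemma~\ref{Phi-l20}, yield factors bounded by $\Gamma(\theta/\alpha)\Gamma((k-1)\theta/\alpha)/\Gamma(k\theta/\alpha)$. Combining gives
\[
|\partial_t\Phi^{\boxtimes k}_t(x,y)|\le \frac{\tilde C_1\,\tilde C_2^{\,k}}{\Gamma(k\theta/\alpha)}\,t^{-2+k\theta/\alpha}\,H_t^{(\gamma,\theta)}(y-x).
\]

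\textbf{Summation and continuity.} Factoring $t^{-2+k\theta/\alpha}=t^{-2+\theta/\alpha}\cdot t^{(k-1)\theta/\alpha}$ and applying \eqref{eq:Get} with $\zeta=\theta/\alpha$, the differentiated series is dominated by
$C e^{ct}\,t^{-2+\theta/\alpha}H_t^{(\gamma,\theta)}(y-x)$ and converges uniformly on compact subsets of $(0,\infty)\times\Rd\times\Rd$. Hence term-by-term differentiation is legitimate, $\partial_t\Psi_t=\sum_{k\ge 1}\partial_t\Phi^{\boxtimes k}_t$, and \eqref{phi-der10} follows. Continuity of each $\partial_t\Phi^{\boxtimes k}_t$ in $t$ is obtained by the same splitting combined with the three-term estimate of Corollary~\ref{Phik_t_cont} applied to $\partial_t\Phi$ (and inductively to $\partial_t\Phi^{\boxtimes(k-1)}$); the uniform convergence transmits continuity to $\partial_t\Psi_t$.

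\textbf{Main obstacle.} The principal technical difficulty lies in closing the induction: the recursion for the constants $M_k$ mixes an inductive term involving $M_{k-1}$ (from the fourth integral) with explicit Gamma factors $1/\Gamma((k-1)\theta/\alpha)$ coming from the boundary and third integrals. The Beta-integral identity underlying Lemma~\ref{Phi-l20} is available only after one accounts for the asymmetric truncation at $t/2$; the polynomial-in-$k$ losses this introduces must be absorbed into the geometric factor $\tilde C_2^{\,k}$ so as to preserve the summable form $\tilde C_2^{\,k}/\Gamma(k\theta/\alpha)$ needed to invoke \eqref{eq:Get}.
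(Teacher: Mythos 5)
Your proposal is correct and follows essentially the same route as the paper's proof: the symmetric splitting of $\Phi^{\boxtimes k}_t$ at $s=t/2$, differentiation under the integral justified by dominated convergence, an induction producing bounds of the form $C_1C_2^k\Gamma(k\theta/\alpha)^{-1}t^{-2+k\theta/\alpha}H_t^{(\gamma,\theta)}$, and summation via \eqref{eq:Get}. The obstacle you single out — absorbing the Gamma-factor mismatch caused by the truncation at $t/2$ into the geometric constant — is exactly what the paper resolves with the inequality $\Gamma((k+1)\zeta)\leq(1-\zeta)^{-k\zeta}\Gamma(k\zeta)$ when estimating the boundary term.
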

\begin{proof}
It follows from Lemma~\ref{Phi-up} and \eqref{GH} that $\partial_t \Phi_t(x,y)$  is  continuous and
\begin{equation} \label{der-Phi1}
  \big|  \partial_t \Phi_t(x,y)\big|\leq C_\Phi  t^{-2+\theta/\alpha} H_t^{(\gamma,\theta)}(y-x), \quad x,y\in\Rd,\, t>0.
\end{equation}
We will show by induction for all $k\geq 1$ that $\partial_t \Phi_t^{\boxtimes k}=\partial_t (\Phi_t^{\boxtimes k})$ exists
and
\begin{equation}\label{der-Phik}
  \big|  \partial_t \Phi_t^{\boxtimes k}(x,y)\big|\leq \frac{C_3C_4^k}{\Gamma(k\theta /\alpha)}   t^{-2+\theta k/\alpha} H_t^{(\gamma,\theta)}(y-x), \quad x,y\in\Rd,\, t>0,
\end{equation}
where
\begin{equation}\label{C34}
C_3=(1\vee  (\Gamma(\theta/\alpha))^{-1})C_1, \quad C_4 = 8    \big( 1\vee  (2-2\theta/\alpha)^{-\theta/\alpha})C_2,\
\end{equation}
and $C_1$, $C_2$  come  from \eqref{Phi20}. The case of $k=1$ is verified by \eqref{Phi1}.
Note that
\begin{equation*}\label{Phik}
  \begin{split}
    \Phi^{\boxtimes (k+1)}_t(x,y)
		 =\, \int_0^{t/2}\int_{\rd}\Phi_{t-s}^{\boxtimes k}(x,z)\Phi_s(z,y)\,dz ds
    +\int_0^{t/2}\int_{\rd}\Phi_{s}^{\boxtimes k}(x,z)\Phi_{t-s}(z,y)\,dz ds,
\end{split}
\end{equation*}for $k\in \N$.
Accordingly, we claim that for $k\in \N$,
\begin{equation}\label{Phik10}
\begin{split}
&\partial_t\Phi^{\boxtimes (k+1)}_t(x,y)=\int_0^{t/2}\int_{\rd}\partial_t\Phi^{\boxtimes k}_{t-s}(x,z)\Phi_s(z,y)\,dz ds\\
&\quad +
\int_0^{t/2}\int_{\rd}\Phi_{s}^{\boxtimes k}(x,z)\partial_t\Phi_{t-s}(z,y)\,dz ds
\quad +\int_{\rd}\Phi_{t/2}^{\boxtimes k}(x,z)\Phi_{t/2}(z,y)\, dz.
\end{split}
\end{equation}
Indeed, we consider $\left(\Phi^{\boxtimes (k+1)}_{t+h}(x,y)-\Phi^{\boxtimes (k+1)}_t(x,y)\right)/h$ as $h\to 0$.
If for some $k\geq 1$, continuous $\partial_t \Phi^{\boxtimes k}_t(x,y)$ exists for all $t>0,\, x,y\in\Rd,$ and
\eqref{der-Phik} holds for every $t>0$, then for $h\in (-t/4,t/4)$ we have
\begin{align*}
  \left| \partial_t\Phi^{\boxtimes k}_{t+h-s}(x,z)\Phi_s(z,y)\right|
	& \leq  c_1 (t+h-s)^{-2+k\theta/\alpha} s^{-1+\theta/\alpha} H_{t+h-s}^{(\gamma,\theta)}(z-x) H_{s}^{(\gamma,\theta)}(y-z) \\
	& \leq  c_2 (t-s)^{-2+k\theta/\alpha} s^{-1+\theta/\alpha} H_{t-s}^{(\gamma,\theta)}(z-x) H_{s}^{(\gamma,\theta)}(y-z)  \\
	&   =:  g_t^{(x,y)}(s,z), \qquad  s\in (0,t/2), x,y,z\in\Rd.
\end{align*}
It follows from Proposition~\ref{sub-conv}  that $\int_0^{t/2} \int_{\Rd} g_t^{(x,y)}(s,z)\, dzds<\infty$. 
Estimating similarly $\left| \Phi_{s}^{\boxtimes k}(x,z)\partial_t \Phi_{t+h-s}(z,y)\right|$, by the continuity of
$t\mapsto \Phi_{t}^{\boxtimes k}(x,z)$
we get \eqref{Phik10}.
Let $I_1$, $I_2$, $I_3$ be the integrals in \eqref{Phik10}, respectively.
Using induction, Lemma~\ref{Phi-up} and Proposition~\ref{sub-conv} we get for the first term
\begin{eqnarray*}
  |I_1|
	& \leq & \frac{2C_\Phi C_3C_4^k  t^{-1}}{\Gamma(k\theta/\alpha)} \int_0^t \int_\Rd (t-s)^{-1+k\theta/\alpha} s^{-1+\theta/\alpha}
	         H_{t-s}^{(\gamma,\theta)}(z-x) H_{s}^{(\gamma,\theta)}(y-z)\, dzds \\
  & \leq & \frac{2C_3 C_\Phi C_{H}C_4^k }{\Gamma(k\theta/\alpha)}
            B\big(k\theta/\alpha,\theta/\alpha\big) t^{-2+(k+1)\theta/\alpha} H_{t}^{(\gamma,\theta)}(y-x) \\
  &   =  & \frac{C_3}{\Gamma((k+1)\theta/\alpha)}  \cdot \big(2 C_2C_4^k\big) \cdot  t^{-2+(k+1)\theta/\alpha} H_{t}^{(\gamma,\theta)}(y-x).  
\end{eqnarray*}
The same estimate holds for  $I_2$, so let us estimate $I_3$.  By \eqref{Phi20},
\begin{eqnarray*}
  |I_3|
	& \leq & \frac{C_1 C_\Phi C_2^k }{\Gamma(k\theta/\alpha)} \Big(\frac{t}{2}\Big)^{-2+(k+1)\theta/\alpha}\int_\Rd
	         H_{t/2}^{(\gamma,\theta)}(z-x) H_{t/2}^{(\gamma,\theta)}(y-z)\, dz \\
  & \leq & \frac{C_1 C_\Phi   C_{H} C_2^k}{\Gamma(k\theta/\alpha)} \Big(\frac{t}{2}\Big)^{-2+(k+1)\theta/\alpha} H_{t}^{(\gamma,\theta)}(y-x).
\end{eqnarray*}
Using the inequality $u\leq e^u$,  valid for all $u\in \real$, we get for $\zeta=\theta/\alpha$,
 $$
   \Gamma((k+1)\zeta) = \int_0^\infty e^{-u}u^{(k+1)\zeta-1}\, du \leq (1-\zeta) ^{-k\zeta} \Gamma(k \zeta).
 $$
Therefore,
 \begin{eqnarray*}
   |I_3|
	& \leq & \frac{C_1 C_\Phi   C_{H} C_2^k2^{2-(k+1)\theta/\alpha}}{(1-\theta/\alpha)^{k\theta/\alpha}\Gamma((k+1)\theta/\alpha)} t^{-2+(k+1)\theta/\alpha} H_{t}^{(\gamma,\theta)}(y-x) \\
	& \leq & \frac{C_3}{\Gamma((k+1)\theta/\alpha)} 4\Big(\frac{C_2}{(2-2\theta/\alpha)^{\theta/\alpha}} \Big)^{k+1}
 t^{-2+(k+1)\theta/\alpha} H_{t}^{(\gamma,\theta)}(y-x),
 \end{eqnarray*}
because $C_2 = C_\Phi C_{H} \Gamma(\theta/\alpha)$. 
 Observe that for $C_4 $  given in \eqref{C34} we have 
 $$
  4 C_2 C_4^k+ 4 \Big(\frac{C_2}{(2-2\theta/\alpha)^{\theta/\alpha})} \Big)^{k+1}\leq C_4^{k+1}, 
 $$
and so
 $$
 I_1+ I_2+I_3 \leq \frac{C_3 C_k^{k+1}}{\Gamma((k+1)\theta/\alpha)}
 t^{-2+(k+1)\theta/\alpha} H_{t}^{(\gamma,\theta)}(y-x),
 $$
proving  \eqref{der-Phik}.   By \eqref{eq:Get}
and \eqref{der-Phik} we get
\eqref{phi-der10}.
\end{proof}

\subsection{Proof of Theorem~\ref{t-prop}}
The proof of \eqref{up100} for $k=0$ easily follows from Proposition~\ref{pr1}. Let us show \eqref{up100} for $k=1$. Our starting point is \eqref{r}.
Lemma~\ref{l:dertxpest} estimates $\partial_t p_t^0(x,y)$.
We then use the estimate for $\partial_t \Psi_t(x,y)$ given in \ref{phi-der}. The estimate of $\partial_t \big(p^0 \boxtimes \Psi\big)_t(x,y)$  can be obtained similarly as the estimates for $\partial_t \Phi_t(x,y)$ in Lemma~\ref{phi-der}.  Indeed, as in the proof of \eqref{Phik10}, using \eqref{GH22} for every $h\in (-t/4,t/4)$ we get
\begin{eqnarray*}
  \left|\partial_t p^0_{t+h-s}(x,z)\Psi_s(z,y)\right|
	& \leq & c_1 (t-s)^{-1}G_{t-s}^{(\alpha+\gamma)}(z-x) s^{-1+\theta/\alpha}e^{c_2s} H_s^{(\gamma,\theta)}(y-z) \\
	& \leq & c_1 \frac{(t-s)^{-1}}{t^{-\theta/\alpha}\wedge 1}H_{t-s}^{(\gamma,\theta)}(z-x) s^{-1+\theta/\alpha}e^{c_2s} H_s^{(\gamma,\theta)}(y-z) \\
	&  =:  & g^{(x,y)}_t(s,z), \quad s\in (0,t), 
\end{eqnarray*}
and it follows from Proposition~\ref{sub-conv} that the majorant satisfies
$$
\int_0^{t/2}\int_{\Rd} g^{(x,y)}_t(s,z)\, dsdz < c t^{-1+\theta/\alpha} e^{c_3t} H_t^{(\gamma,\theta)}(y-x)<\infty.
$$
 Similarly we estimate
$|p^0_{s}(x,z)(\partial_t\Psi)_{t+h-s}(z,y)|$. These bounds and the continuity of $t\mapsto p^0_t(x,y)$ and $t\mapsto \Psi_t(x,y)$ allow us to write
\begin{equation*}
\begin{split}
\partial_t \big(p^0& \boxtimes \Psi\big)_t(x,y)=\int_0^{t/2}\int_{\rd}(\partial_t p^0)_{t-s}(x,z)\Psi_s(z,y)\,dz ds\\
&\quad +
\int_0^{t/2}\int_{\rd}p^0_{s}(x,z)(\partial_t\Psi)_{t-s}(z,y)\,dz ds +\int_{\rd}p^0_{t/2}(x,z)\Psi_{t/2}(z,y)\, dz.
\end{split}
\end{equation*}
We obtain
$$
\big|\partial_t \big(p^0 \boxtimes \Psi\big)_t(x,y)\big| \leq C t^{-1} e^{ct} G_t^{(\alpha+\gamma)}(y-x).
$$
This
finishes the proof of \eqref{up100}.
We next prove \eqref{ptx-hol}. We observe that by
\eqref{lip-p},
\begin{equation*}\label{hol-p0}
  \left| p_t^0(x_1,y)-p_t^0(x_2,y)\right| \leq C \Big( \frac{|x_1-x_2|}{t^{1/\alpha}} \wedge 1\Big)
	\left( G_t^{(\alpha+\gamma)}(y-x_1)+G_t^{(\alpha+\gamma)}(y-x_2)\right).
\end{equation*}
We first suppose that $t\in (0,1]$. Using  \eqref{Psi1} for $\Psi$, \eqref{GH22} and the sub-convolution property of $H_t^{(\gamma,\theta)}(x)$ we obtain
\begin{align*}
  \int_0^t  &\int_\rd |p_{t-s}^0(x_1,z)-p_{t-s}^0(x_2,z)||\Psi_s(z,y)|\, dzds \\
  & \leq c_1 |x_2-x_1|^\theta \int_0^t \int_\rd (t-s)^{-\theta/\alpha} \Big( G_{t-s}^{(\alpha+\gamma)}(z-x_1)+G_{t-s}^{(\alpha+\gamma)}(z-x_2)\Big) \\
  & \quad \cdot s^{-1+\theta/\alpha} H_s^{(\gamma,\theta)}(y-z)\, dzds \\
  & \leq c_2 |x_2-x_1|^\theta  \left( H_t^{(\gamma,\theta)}(y-x_1)+ H_t^{(\gamma,\theta)}(y-x_2)\right) \\
  & \leq c_2 \Big(\frac{|x_2-x_1|}{t^{1/\alpha}}\Big)^\theta \left( G_t^{(\alpha+\gamma)}(y-x_1)+ G_t^{(\alpha+\gamma)}(y-x_2)\right),
\end{align*}
where in the last line we used that $t^{\theta/\alpha} H_t^{(\gamma,\theta)}(x)\leq G_t^{(\alpha+\gamma)}(x)$ for $t\in (0,1]$.
Next we suppose that $t>1$. Using the estimate for $\Psi$  and \eqref{GH22} twice  we get
\begin{align*}
  \int_0^t
	& \int_\rd |p_{t-s}^0(x_1,z)-p_{t-s}^0(x_2,z)| |\Psi_s(z,y)|\, dzds\\
	& \leq c_1 |x_2-x_1|^\theta e^{ct} \int_0^{t} (t-s)^{-\theta/\alpha}\int_\rd \left( G_{t-s}^{(\alpha+\gamma)}(z-x_1)+G_{t-s}^{(\alpha+\gamma)}(z-x_2)\right) \\
  & \quad \cdot s^{-1+\theta/\alpha} H_s^{(\gamma,\theta)}(y-z)\, dzds \\
	& \leq c_1 |x_2-x_1|^\theta e^{ct} \int_0^{t} (1\vee (t-s)^{-\theta/\alpha}) \int_\rd \left( H_{t-s}^{(\gamma,\theta)}(z-x_1)+H_{t-s}^{(\gamma,\theta)}(z-x_2)\right) \\
  & \quad \cdot s^{-1+\theta/\alpha} H_s^{(\gamma,\theta)}(y-z)\, dzds \\
   & \leq c_2|x_2-x_1|^\theta t^{\theta/\alpha}e^{ct}
    \left( H_t^{(\gamma,\theta)}(y-x_1)+H_t^{(\gamma,\theta)}(y-x_2)\right) \\
  & \leq c_2|x_2-x_1|^\theta   e^{ct}
    \left(G_t^{(\alpha+\gamma)}(y-x_1)+G_t^{(\alpha+\gamma)}(y-x_2)\right) \\
  & \leq c_2 \left(\frac{|x_2-x_1|}{t^{1/\alpha}}\right)^\theta   e^{c_3t}
    \Big(G_t^{(\alpha+\gamma)}(y-x_1)+G_t^{(\alpha+\gamma)}(y-x_2)\Big).
\end{align*}
This  ends the proof of \eqref{ptx-hol} for $t>0$.

According to our choice of the first approximation $p^0_t(x,y)$, the regularity of $y\mapsto p_t(x,y)$ is less obvious than that of $x\mapsto p_t(x,y)$. The next result gives a preparation for such regularity and may be confronted with Lemma~\ref{phol}.
\begin{lemma}\label{contofPhi}
  For all $t>0$ and $y\in\Rd$ we have
	\begin{equation}\label{Lpty_cont}
		\lim_{z\to y} \sup_{x\in\Rd} |\Phi_t (x,z) - \Phi_t (x,y)| = 0.
	\end{equation}
\end{lemma}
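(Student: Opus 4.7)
The plan is to exploit the compact representation $\Phi_t(x,y) = (L^x - L^y)[p_t^y](y-x)$, which follows from \eqref{Phi} combined with \eqref{Couchy_for_Levy} and the symmetry of $\nu$ (here $L^w$ acts on functions of one variable). Setting $\xi := z-y$, the translation invariance of $L^w$ yields the identity $(L^x-L^z)[p_t^z](z-x) = (L^x-L^z)[p_t^z(\cdot+\xi)](y-x)$, so defining $\psi_\xi(w):=p_t^z(w+\xi)-p_t^y(w)$ I will use the decomposition
$$
\Phi_t(x,z)-\Phi_t(x,y) = \underbrace{(L^x-L^z)[\psi_\xi](y-x)}_{A(x,z)} + \underbrace{(L^y-L^z)[p_t^y](y-x)}_{B(x,z)}.
$$

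For $B(x,z)$, condition \textbf{A2} together with Lemma~\ref{th-ptx2} and Lemma~\ref{l:StGenerEst} (applied with $\zeta=d$, $\kappa=0$) will give $|B(x,z)|\leq M_0(|z-y|^\eta\wedge 1)\,\modgener p_t^y(y-x)\leq c(|z-y|^\eta\wedge 1)\,t^{-1-d/\alpha}$, which is uniform in $x$ and vanishes as $z\to y$. This is the routine piece.

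The main obstacle is $A(x,z)$. Using the crude pointwise bound $|\nu(x,\cdot)-\nu(z,\cdot)|\leq 2M_0\nu_0$ from \textbf{A2}, I reduce the task to a uniform-in-$w$ estimate on $\modgener\psi_\xi(w)$. For this I split
$$
\psi_\xi(w)=\bigl[p_t^z(w+\xi)-p_t^y(w+\xi)\bigr]+\bigl[p_t^y(w+\xi)-p_t^y(w)\bigr]:
$$
the first bracket and its second derivative are controlled by Lemma~\ref{hol10} (with $|\beta|=0$ and $|\beta|=2$, for any admissible $\theta\in(0,\eta\wedge(\alpha+\gamma-d))$), and the second bracket by Taylor expansion combined with the derivative estimates of Lemma~\ref{th-ptx2}. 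For $|\xi|\leq t^{1/\alpha}$ the elementary inequality $G_t^{\beta}(w+s\xi)\leq c\,G_t^{\beta}(w)$, valid for $s\in[0,1]$, will allow me to absorb the shift in the argument, yielding
$$
|\partial^\beta\psi_\xi(w)|\leq K_{t,\xi}\, t^{-|\beta|/\alpha}\, G_t^{\alpha+\gamma-\theta}(w),\qquad |\beta|\in\{0,2\},
$$
with $K_{t,\xi}:=c\bigl[(|\xi|^\eta\wedge 1)+|\xi|\,t^{-1/\alpha}\bigr]$. Applying Lemma~\ref{l:StGenerEst} with $\zeta=d$, $\kappa=\theta$ then gives $\modgener\psi_\xi(w)\leq c\,K_{t,\xi}\,t^{-1-d/\alpha}$ uniformly in $w$, and hence in $x$. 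Since $K_{t,\xi}\to 0$ as $z\to y$, combining the $A$ and $B$ bounds establishes \eqref{Lpty_cont}.
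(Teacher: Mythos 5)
Your argument is correct and follows essentially the same route as the paper's proof: both rest on the representation $\Phi_t(x,y)=(L^x-L^y)[p_t^y](y-x)$ and control the three sources of discrepancy --- the operator difference via \textbf{A2}, the kernel difference $p_t^z-p_t^y$ via Lemma~\ref{hol10}, and the argument shift via Taylor expansion and Lemma~\ref{th-ptx2} --- all converted into uniform bounds through Lemma~\ref{l:StGenerEst}. The paper merely organizes the same estimates into more terms by treating $\partial_t$ and $L_x$ separately, whereas you fold the kernel difference and the shift into the single function $\psi_\xi$; the resulting bounds are the same.
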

\begin{proof}
Since $\partial_t p_t^z(x) = L^z p_t^z(x)$ we get
	\begin{align*}
	  | \partial_t (p_t^z (z-x)& - p_t^y(y-x))|
		 =   | L^z p_t^z (z-x) - L^y p_t^y(y-x) | \\
		& \leq  | L^z p_t^z (z-x) - L^y p_t^z(z-x)| + | L^y p_t^z (z-x) - L^y p_t^y(z-x)| \\
		&       + \, | L^y p_t^y (z-x) - L^y p_t^y(y-x) | \\
		&  = I_1 + I_2 + I_3.
	\end{align*}
From \textbf{A2} and Lemma~\ref{l:StGenerEst} we have
	\begin{eqnarray*}
		 I_1
		 & \leq &  \int_\Rd \left| p_t^z(z-x+u) - p_t^z(z-x) - u \cdot \nabla_x p_t^z(z-x) \indyk{|u|\leq t^{1/\alpha}}(u)\right|
		           \left|\nu(z,du) - \nu(y,du) \right| \\
		 & \leq & M_0 (|z-y|^\eta\wedge 1) \modgener p_t^z (z-x) \\
		 & \leq & c_1 (|z-y|^\eta\wedge 1) t^{-1} G_t^{(\alpha+\gamma)}(z-x) \leq c_1 t^{-1-d/\alpha} (|z-y|^\eta\wedge 1).
	\end{eqnarray*}
	From Lemma~\ref{hol10} and Lemma~\ref{l:StGenerEst} we obtain
	\begin{eqnarray*}
	  I_2
		&  =   & |L^y (p_t^z - p_t^y)(z-x)| \leq \modgener (p_t^z-p_t^y)(z-x) \\
		& \leq & c_2 (|z-y|^\eta\wedge 1)t^{-1} G_t^{(\alpha+\gamma-\theta)}(z-x) \leq c_2 t^{-1-d/\alpha} (|z-y|^\eta\wedge 1).
	\end{eqnarray*}
	Finally, let $|y-z|<t^{1/\alpha}$ and $g_t(w) = p_t^y (w-(y-z)) - p_t^y(w) $.
	Using Taylor expansion and Lemma \ref{th-ptx2},
	for every $\beta\in\N_0^d$ such that $|\beta|\leq 2$ we get
	$$
	  |\partial^\beta_w g_t(w)|
		 \leq  c_3 |z-y|t^{(-1-|\beta|)/\alpha} G_t^{(\alpha+\gamma)}(w).
	$$
This and Lemma~\ref{l:StGenerEst} yield
	\begin{align*}
	  I_3
		&   =   | L^y g_t(y-x) | \leq  \modgener g_t(y-x)  \leq c_4 |z-y| t^{-1-1/\alpha} G_t^{(\alpha+\gamma)}(y-x) \\
		& \leq  c_4 t^{-1-(1+d)/\alpha} |z-y|.
	\end{align*}
	Therefore,
	$$
	  \lim_{z\to y} \sup_{x\in\Rd} |\partial_t (p_t^z (z-x) - p_t^y(y-x))| = 0.
	$$
	Similarly,
	\begin{eqnarray*}
	  |L^x p_t^z(z-x) - L^x p_t^y(y-x)|
		& \leq & |L^x (p_t^z - p_t^y)(z-x)| + |L^x p_t^y(z-x) - L^x p_t^y(y-x)| \\
		& \leq & \modgener (p_t^z - p_t^y)(z-x) + \modgener g_t(y-x) \\
		& \leq & c_5 t^{-1-d/\alpha}(|z-y|^\eta\wedge 1) + c_6 t^{-1-(1+d)/\alpha}|z-y|,
	\end{eqnarray*}
	$
	  \lim_{z\to y} \sup_{x\in\Rd} |L^x (p_t^z (z-x) - p_t^y(y-x))| = 0
	$,
	and \eqref{Lpty_cont} follows.
  \end{proof}
	\begin{lemma}\label{p_continuity}
	  For all $t>0$ and $x\in\Rd$ the function $y\mapsto p_t(x,y)$ is continuous.
	\end{lemma}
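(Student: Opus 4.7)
The plan is to work from the perturbation formula \eqref{eq:pfo},
\[
  p_t(x,y) \;=\; p_t^0(x,y) \;+\; \int_0^t\!\int_{\Rd} p_s(x,z)\,\Phi_{t-s}(z,y)\,dz\,ds,
\]
and establish continuity in $y$ of each summand separately. The first one is immediate: Lemma~\ref{pty_cont} gives $\lim_{y_n\to y_0}\sup_x |p_t^0(x,y_n)-p_t^0(x,y_0)|=0$, so $y\mapsto p_t^0(x,y)$ is even continuous uniformly in $x$. It remains to show that, for fixed $t>0$ and $x\in\Rd$, the integral term is continuous in $y$.

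Fix $y_0\in\Rd$, take $y_n\to y_0$ with $|y_n-y_0|\leq 1$, and put
\[
  I_n(s)\;:=\;\int_{\Rd} p_s(x,z)\bigl[\Phi_{t-s}(z,y_n)-\Phi_{t-s}(z,y_0)\bigr]\,dz,\qquad s\in(0,t).
\]
The target is $\int_0^t I_n(s)\,ds\to 0$, which I plan to obtain by Lebesgue's dominated convergence on $(0,t)$. For pointwise convergence $I_n(s)\to 0$ at each $s\in(0,t)$, Lemma~\ref{contofPhi} supplies $\sup_z|\Phi_{t-s}(z,y_n)-\Phi_{t-s}(z,y_0)|\to 0$; combined with $\int p_s(x,z)\,dz\leq C e^{cs}$, which follows from Theorem~\ref{t-prop} and \eqref{g10b}, this yields $|I_n(s)|\leq C e^{cs}\sup_z|\Phi_{t-s}(z,y_n)-\Phi_{t-s}(z,y_0)|\to 0$.

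For the $n$-uniform integrable majorant on $(0,t)$, I will start from $|\Phi_{t-s}(z,y_n)|\leq C(t-s)^{-1+\theta/\alpha}H_{t-s}^{(\gamma,\theta)}(y_n-z)$, which comes from Lemma~\ref{Phi-up} and \eqref{GH} for $\theta$ as in~\eqref{thet}. The $y_n$-dependence is absorbed via the elementary pointwise inequalities $H_{t-s}^{(\gamma,\theta)}(w)\leq G_{t-s}^{(\alpha+\gamma-\theta)}(w)$ (read directly from \eqref{H10}) and $G_s^{(\alpha+\gamma)}(z-x)\leq G_s^{(\alpha+\gamma-\theta)}(z-x)$; Lemma~\ref{subconvG} applied with exponent $\alpha+\gamma-\theta\in (d,d+\alpha)\subset (d,d+2)$ (valid thanks to $\alpha+\gamma>d$ and $\theta\in(0,\alpha+\gamma-d)$) then gives
\[
  \int_{\Rd} p_s(x,z)|\Phi_{t-s}(z,y_n)|\,dz\;\leq\; C e^{cs}(t-s)^{-1+\theta/\alpha}\,G_t^{(\alpha+\gamma-\theta)}(y_n-x).
\]
Since $G_t^{(\alpha+\gamma-\theta)}\leq t^{-d/\alpha}$, this is bounded in $n$ by $C' e^{cs}(t-s)^{-1+\theta/\alpha}$; the same bound with $y_0$ in place of $y_n$ produces $|I_n(s)|\leq C'' e^{cs}(t-s)^{-1+\theta/\alpha}$, integrable on $(0,t)$ because $\theta/\alpha>0$. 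Dominated convergence then concludes.

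The hard part is the construction of the $n$-uniform spatial majorant for $|\Phi_{t-s}(z,y_n)|$; the chain of inequalities above handles this by shifting the $y_n$-dependence into the factor $G_t^{(\alpha+\gamma-\theta)}(y_n-x)$, which is uniformly bounded once $y_n$ stays in $B(y_0,1)$. Beyond this, the argument is a routine application of tools already assembled in the paper.
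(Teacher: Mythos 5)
Your proof is correct and follows essentially the same route as the paper's: both start from the perturbation formula \eqref{eq:pfo}, dispose of the $p_t^0$ term via Lemma~\ref{pty_cont}, and treat the integral term by combining Lemma~\ref{contofPhi} with the bound \eqref{Phi1} and a sub-convolution estimate. The only difference is organizational --- you integrate out $z$ first and apply dominated convergence in $s$ over all of $(0,t)$ with the majorant $Ce^{cs}(t-s)^{-1+\theta/\alpha}$, whereas the paper splits the time interval at $t-\varepsilon$ and handles the tail piece by a uniform smallness estimate; both are valid.
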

	\begin{proof}
For the proof we rely on \eqref{eq:pfo}.	
		It is straightforward to see that
		$$
		  H_s^{(\gamma,\theta)}(y+h) \leq c H_s^{(\gamma,\theta)}(y),
		$$
		where $y\in\Rd$, $s>0$, $h\in \Rd$ and $|h|<s^{1/\alpha}$.
		Let $T\in (0,\infty)$ and $t\in (0,T]$.
		By Theorem~\ref{t-prop}, \eqref{GH22}, \eqref{Phi1} and Proposition~\ref{sub-conv} for
		every $\varepsilon>0$ and $|h|<\varepsilon^{1/\alpha}$ we get
		\begin{eqnarray*}
		  &      & \int_0^{t-\varepsilon} \int_{\Rd} | p_s(x,z)\Phi_{t-s}(z,y+h)|\, dzds \\
			& \leq &  \frac{c_1e^{c_2t}}{t^{-\theta/\alpha}\wedge 1}
			          \int_0^{t-\varepsilon} \int_{\Rd} H_s^{(\gamma,\theta)}(z-x)
								(t-s)^{-1+\theta/\alpha} H_{t-s}^{(\gamma,\theta)}(y+h-z)\, dzds \\
			& \leq &  c_3 \int_0^{t-\varepsilon} \int_{\Rd} (t-s)^{-1+\theta/\alpha} H_s^{(\gamma,\theta)}(z-x)
								 H_{t-s}^{(\gamma,\theta)}(y-z)\, dzds \\
			& \leq &  c_4 \int_0^{t-\varepsilon} (t-s)^{-1+\theta/\alpha} H_t^{(\gamma,\theta)}(y-x)\, ds
			          \leq c_5 H_t^{(\gamma,\theta)}(y-x),
		\end{eqnarray*}
		with $c_3,c_4,c_5$ depending on $T$. By the dominated convergence
		and Lemma~\ref{contofPhi},
		$$
		  \lim_{h\to 0} \int_0^{t-\varepsilon} \int_{\Rd} p_s(x,z)\Phi_{t-s}(z,y+h)\, dzds
			= \int_0^{t-\varepsilon} \int_{\Rd}  p_s(x,z)\Phi_{t-s}(z,y)\, dzds.
		$$
		Furthermore, for every $|h|<t^{1/\alpha}$,
		\begin{eqnarray*}
		  &      & \left|\int_{t-\varepsilon}^{t} \int_{\Rd} p_s(x,z)\Phi_{t-s}(z,y+h) \, dzds \right| \\
			& \leq &  c_6
			          \int_{t-\varepsilon}^{t} \int_{\Rd} H_s^{(\gamma,\theta)}(z-x)
								(t-s)^{-1+\theta/\alpha} H_{t-s}^{(\gamma,\theta)}(y+h-z)\, dzds \\
			& \leq &  c_7 \int_{t-\varepsilon}^{t} (t-s)^{-1+\theta/\alpha} H_t^{(\gamma,\theta)}(y+h-x)\, ds
			          \leq c_8 \varepsilon^{\theta/\alpha} H_t^{(\gamma,\theta)}(y-x)<\infty.
		\end{eqnarray*}
		This and Lemma~\ref{pty_cont} yield the continuity of $y\mapsto p_t(x,y)$.
	\end{proof}
The proof of Theorem~\ref{t-prop} is complete.\qed{}

\section{The maximum principle}\label{Uni}

In this part of our development we follow Kochube\u\i{}'s argument from \cite[Section 6]{MR972089} with some modifications--we temper by $e^{-\lambda t}$ rather than restrict time. For $\lambda\in \RR$ we let $\tilde{p}_t(x,y) = e^{-\lambda t} p_t(x,y)$, where $t>0$, $x,y\in \Rd$.
By Theorem~\ref{t-prop}, $\tilde{p}_t(x,y) \leq C e^{-(\lambda-c)t} G_t^{\alpha+\gamma}(y-x)$.
We can give a solution to the Cauchy problem for $L-\lambda$.
\begin{lemma}\label{nn}
If $f\in C_0(\Rd)$, $u(t,x)=\int_\Rd \tilde p_t(x,y)f(y)\, dy$ for $t>0$ and $u(0,x)=f(x)$, $x\in \Rd$, then
$u$ is a continuous function on $[0,\infty)\times \Rd$, and
\begin{equation}\label{pt_L_tu}
  \big( \partial_t -L_x+\lambda\big) u(t,x)= 0,\quad t>0,\  x\in \Rd.
\end{equation}
If $\lambda\ge c$, where  $c$ is from {\rm Theorem~\ref{t-prop}}, then
$u\in C_0([0,\infty)\times \Rd)$.
\end{lemma}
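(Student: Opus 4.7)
To begin I would establish continuity of $u$ on $[0,\infty)\times\Rd$. For $(t_n,x_n)\to(t_0,x_0)$ with $t_0>0$, one works in a neighbourhood of the form $[a,b]\times\overline{B(x_0,R)}$ with $a>0$; there, the bound $\tilde p_t(x,y)\le C e^{(c-\lambda)t}G_t^{(\alpha+\gamma)}(y-x)$ is majorized by an integrable function $g(y)$ of $y$ alone (the tail decay $|y|^{-\alpha-\gamma}$ is integrable thanks to $\alpha+\gamma>d$ from \textbf{A1}). Joint continuity of $(t,x)\mapsto p_t(x,y)$ follows by combining the H\"older estimate in $x$ from Theorem~\ref{t-prop}, continuity in $y$ from Lemma~\ref{p_continuity}, and continuity in $t$ via the time-derivative bound in Theorem~\ref{t-prop}. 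Dominated convergence then yields $u(t_n,x_n)\to u(t_0,x_0)$. Continuity on $\{0\}\times\Rd$ reduces, via $u(t,x)=e^{-\lambda t}\int p_t(x,y)f(y)\,dy$, to the uniform-in-$x$ limit in Theorem~\ref{t-exist} combined with the continuity of $f$.

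Next I would prove \eqref{pt_L_tu}. Differentiating $\tilde p_t=e^{-\lambda t}p_t$ and invoking Theorem~\ref{t-exist} gives
\[
\partial_t\tilde p_t(x,y)=-\lambda\tilde p_t(x,y)+e^{-\lambda t}L_x p_t(x,y).
\]
The estimate $|\partial_t p_t(x,y)|\le C t^{-1}e^{ct}G_t^{(\alpha+\gamma)}(y-x)$ of Theorem~\ref{t-prop} supplies an integrable-in-$y$ majorant that is locally uniform in $(t,x)$, justifying $\partial_t u(t,x)=\int f(y)\,\partial_t\tilde p_t(x,y)\,dy$. It remains to identify $\int f(y)L_x p_t(x,y)\,dy$ with $L_x u(t,x)$. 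For fixed $\delta>0$ the double integral defining $L_x^\delta u$ is absolutely convergent (use $\nu(x,B(0,\delta)^c)<\infty$, $\|f\|_\infty<\infty$ and the bound on $\int\tilde p_t(x,\cdot)$), so Fubini yields
\[
L_x^\delta u(t,x)=\int f(y) L_x^\delta\tilde p_t(x,y)\,dy.
\]
Now let $\delta\to 0^+$: the pointwise convergence $L_x^\delta p_t(x,y)\to L_x p_t(x,y)$ from Lemma~\ref{lem-ltil}, together with the $\delta$-uniform, $y$-integrable bound on $|L_x^\delta p_t(x,y)|$ provided by Corollary~\ref{rem-6-1}, allows dominated convergence inside the $y$-integral. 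This simultaneously shows that the finite limit $L_x u(t,x)$ exists (so $u(t,\cdot)\in D(L)$, the $C_0$-property of $u(t,\cdot)$ already following from the first step together with $f\in C_0$) and equals $\int f(y)L_x\tilde p_t(x,y)\,dy$, which combined with the first display gives \eqref{pt_L_tu}.

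For the final assertion, assume $\lambda\ge c$, so $\int\tilde p_t(x,y)\,dy\le C_*$ uniformly in $t,x$. Given $\varepsilon>0$, choose $f_\varepsilon\in C_c(\Rd)$ with $\|f-f_\varepsilon\|_\infty<\varepsilon$ and let $u_\varepsilon$ be the corresponding solution; then $\sup_{t,x}|u(t,x)-u_\varepsilon(t,x)|\le\varepsilon C_*$, while $u_\varepsilon(t,x)$ tends to $0$ as $|x|\to\infty$ with $t$ bounded (the integrand is supported in a fixed compact set and $G_t^{(\alpha+\gamma)}(y-x)$ decays in $|x|$ there) and as $t\to\infty$ uniformly in $x$ (the prefactor $t^{-d/\alpha}$ in $G_t^{(\alpha+\gamma)}$ forces decay, while $e^{(c-\lambda)t}\le 1$ provides no growth). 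Letting $\varepsilon\to 0$ gives $u(t,x)\to 0$ as $t+|x|\to\infty$; combined with continuity this yields $u\in C_0([0,\infty)\times\Rd)$. The main obstacle throughout is the interchange $L_x\int=\int L_x$, which hinges precisely on the uniform-in-$\delta$, $y$-integrable control of $L_x^\delta\tilde p_t(x,y)$ given by Corollary~\ref{rem-6-1}.
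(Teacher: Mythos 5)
Your proposal is correct and follows essentially the same route as the paper's proof: joint continuity via the H\"older and time-derivative bounds of Theorem~\ref{t-prop} together with \eqref{f-sol}, the identity \eqref{pt_L_tu} obtained by Fubini for $L^\delta$ followed by dominated convergence using the $\delta$-uniform, $y$-integrable bound of Corollary~\ref{rem-6-1}, and differentiation under the integral in $t$. The only (immaterial) difference is the final $C_0$ step, where you approximate $f$ by compactly supported functions, whereas the paper substitutes $y\mapsto t^{1/\alpha}y+x$ and applies dominated convergence; both arguments are routine and equivalent.
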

\begin{proof}
Let $(t_0,x_0)\in (0,\infty)\times \Rd$. We have
\begin{eqnarray*}
	|u(t_0,x_0) - u(t,x)|
	& \leq & \int_\Rd |\tilde{p}_{t_0}(x_0,y) - \tilde{p}_t(x,y)| |f(y)|\, dy \\
	& \leq & e^{-\lambda t_0}\int_\Rd |p_{t_0}(x_0,y) - p_{t_0}(x,y)| |f(y)|\, dy \\
	&      &  +\, \int_\Rd |\tilde{p}_{t_0}(x,y) - \tilde{p}_t(x,y)| |f(y)|\, dy \to 0,
\end{eqnarray*}
as $(t,x) \to (t_0,x_0)$. This follows from the dominated convergence, since Theorem \ref{t-prop} for $|x-x_0|<t_0^{1/\alpha}$ yields
\begin{eqnarray*}
	\left| p_{t_0}(x_0,y)-p_{t_0}(x,y) \right|
	& \leq & c_1 \left(\frac{|x_0-x|}{t_{0}^{1/\alpha}}\right)^{\theta}e^{ct_0}
      \left(G_{t_0}^{(\alpha+\gamma)}(y-x_0)+
  G_{t_0}^{(\alpha+\gamma)}(y-x)\right) \\
	& \leq & c_2 \left(\frac{|x_0-x|}{t_{0}^{1/\alpha}}\right)^{\theta}e^{ct_0}
      G_{t_0}^{(\alpha+\gamma)}(y-x_0),
 \end{eqnarray*}
 and for $|t-t_0| \leq t_0/2$ and some $s\in (t\wedge t_0, t \vee t_0)$ we have
 \begin{eqnarray*}
   |\tilde{p}_{t_0}(x,y) - \tilde{p}_t(x,y)|
	 &   =  & \left| \left(e^{-\lambda s}\partial_s p_s(x,y) - \lambda e^{-\lambda s} p_s(x,y)\right)(t_0-t) \right| \\
	 & \leq & c_3 (s^{-1} + \lambda) e^{(c-\lambda)s} G_s^{(\alpha+\gamma)}(y-x) |t_0-t| \\
	 & \leq & c_4 (2/t_0 + \lambda) e^{(c-\lambda)(t_0/2)}
	   G_{t_0}^{(\alpha+\gamma)}(y-x) |t_0-t|.
 \end{eqnarray*}
If $(t,x) \to (0,x_0)$ for some $x_0\in\Rd$, then by Theorem \ref{t-exist} and the continnuity of $f$,
\begin{eqnarray*}
  | f(x_0) - u(t,x)|
	& \leq & |f(x_0) - f(x)| + |f(x) - {u}(t,x)| \to 0,
\end{eqnarray*}
This gives the continuity of $u$ on $[0,\infty)\times \Rd$.

Let $\delta>0$. Using the notation from Section~\ref{sec:iar}, by Fubini's theorem we get
\begin{eqnarray*}
  L^\delta {u} (t,x)
	  =   \int_{|u|>\delta} \left( {u}(t,x+u)-{u}(t,x)\right)\, \nu(x,du)
	 =   \int L^\delta_x \tilde p_t(x,y) f(y) dy.
\end{eqnarray*}
By \eqref{L3-est}  and the dominated convergence theorem we get
\begin{equation}\label{L_tu}
  L {u} (t,x) = \int L_x \tilde{p}_t(x,y) f(y)\, dy.
\end{equation}
In order to show that $\partial_t {u}(t,x) = \int \partial_t \tilde{p}_t(x,y)f(y)\, dy$, it suffices to estimate $|\partial_t \tilde{p}_t(x,y)|$ for every $t_0$, some $\delta>0$ and all $t\in (t_0-\delta,t_0+\delta)$ by an integrable function
depending only on $t_0$ and $x$, $y$. We obtain the estimate by using Theorem~\ref{t-prop}, which yields
\begin{eqnarray*}
  |\partial_t \tilde{p}_t(x,y)| \nonumber
	& \leq & \lambda e^{-\lambda t} | p_t(x,y) | + e^{-\lambda t} | \partial_t p_t(x,y)| \\ \nonumber
	& \leq & \lambda c_1 e^{-(\lambda-c) t} G_t^{(\alpha+\gamma)}(y-x) (1+t^{-1}) \\ \nonumber
	& \leq & \lambda c_1 G_t^{(\alpha+\gamma)}(y-x) (1+(t_0-\delta)^{-1}).
\end{eqnarray*}
By the dominated convergence theorem,
\begin{equation}\label{par_t_tu}
  \partial_t {u}(t,x) = \int \partial_t \tilde{p}_t(x,y)f(y)\, dy.
\end{equation}
We note here that \eqref{L_tu} and \eqref{par_t_tu} hold in fact for every bounded function $f$.
Now it easily follows from Theorem~\ref{t-exist} that
\begin{eqnarray*}
  (\partial_t - L_x) \tilde{p}_t(x,y)
	&   =  &  e^{-\lambda t}\partial_t p_t(x,y) - \lambda e^{-\lambda t} p_t(x,y) - e^{- \lambda t} L_x p_t(x,y) \\
	&   =  & -\lambda\tilde{p}_t(x,y),
\end{eqnarray*}
which, together with \eqref{par_t_tu} and \eqref{L_tu}, yield \eqref{pt_L_tu}.
If $\lambda\ge c$, then we have
\begin{eqnarray*}
  \left|\int_{\Rd} \tilde{p}_t(x,y) f(y)\, dy \right|
	& \leq &  c_1 e^{-(\lambda-c)t} \int_{\Rd} G_t^{\alpha+\gamma}(y-x) |f(y)|\, dy \\
	&  =   & c_1 e^{-(\lambda-c)t} \int_{\Rd} G_1^{\alpha+\gamma}(y) |f(t^{1/\alpha}y+x)|\, dy \\
	& \leq & c_1 e^{-(\lambda-c)t} \|f\|_{\infty} \int_{\Rd} G_1^{\alpha+\gamma}(y) \, dy \leq c_2 \|f\|_{\infty}.
\end{eqnarray*}
In fact, $|f(t^{1/\alpha}y+x)| \to 0$ as $t\to\infty$ or $|x|\to\infty$. By
the dominated convergence theorem, $\lim_{|(t,x)|\to\infty} {u}(t,x) = 0$.
\end{proof}

\begin{lemma}\label{mm}
If $u(t,x)\in C_0([0,\infty)\times \Rd)$, $\lambda\ge 0$ and
$(\partial_t-L_x+\lambda)u(t,x)=0$ on $(0,\infty)\times \Rd$, then
$$
\sup\limits_{(t,x)\in [0,\infty)\times \Rd}|u(t,x)|=\sup\limits_{x\in \Rd}|u(0,x)|.
$$
\end{lemma}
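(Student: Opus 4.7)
The plan is to argue by contradiction via the nonlocal strong maximum principle for $L$, stated in the paragraph following \eqref{lxd0} of the introduction. Since $u \in C_0([0,\infty)\times\Rd)$, it is bounded and vanishes at infinity, so both $M := \sup u$ and $m := \inf u$ are attained. It suffices to prove that $M \le \sup_{x\in\Rd}|u(0,x)|$: the same argument applied to $-u$ (which again satisfies $(\partial_t - L_x + \lambda)(-u) = 0$ by linearity, and lies in $C_0$) then yields $-m \le \sup_{x\in\Rd}|u(0,x)|$, so that $\sup|u| = \max(M,-m) \le \sup_{x\in\Rd}|u(0,x)|$; the reverse inequality is trivial.

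Next I would dispose of the easy case $M \le 0$, for which $M \le 0 \le \sup_x |u(0,x)|$ is automatic. So assume $M > 0$. Because $u(t,x)\to 0$ as $|(t,x)|\to\infty$, the set $\{(t,x) : u(t,x) \ge M/2\}$ is compact in $[0,\infty)\times\Rd$, hence $M$ is attained at some point $(t_0,x_0)$ with $t_0 \in [0,\infty)$ and $|x_0|<\infty$. If $t_0 = 0$ we are done since $M = u(0,x_0) \le \sup_x|u(0,x)|$. I would therefore assume $t_0 > 0$ and derive a contradiction.

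At $(t_0,x_0)$, the map $t\mapsto u(t,x_0)$ attains its maximum at the interior point $t_0$, so $\partial_t u(t_0,x_0) = 0$ (the derivative exists because the equation $(\partial_t - L_x + \lambda)u = 0$ is assumed to hold pointwise on $(0,\infty)\times\Rd$, which forces $\partial_t u(t_0,x_0)$ and $L_x u(t_0,x_0)$ to exist as finite numbers). The function $f := u(t_0,\cdot)$ belongs to $C_0(\Rd)$, attains its supremum $M>0$ at $x_0$, and by \textbf{A2} the measure $\nu(x_0,\cdot) \ge M_0^{-1}\nu_0$ has unbounded support. For any $\varepsilon\in(0,M/4)$, vanishing of $f$ at infinity gives $R>0$ with $|f(x_0\pm u)|<\varepsilon$ whenever $|u|>R$; hence $f(x_0+u)+f(x_0-u)-2f(x_0) < 2\varepsilon - 2M < -M$ on the set $\{|u|>R\}$, which has positive $\nu(x_0,\cdot)$-measure. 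Combined with $f(x_0+u)+f(x_0-u)-2f(x_0) \le 0$ everywhere (by maximality of $f(x_0)$), this yields $L_x u(t_0,x_0) = L^{x_0} f(x_0) < 0$ strictly. Since also $\lambda u(t_0,x_0) = \lambda M \ge 0$, we obtain
\[
0 = \big(\partial_t - L_x + \lambda\big) u(t_0,x_0) = 0 - L_x u(t_0,x_0) + \lambda M > 0,
\]
a contradiction. Thus $t_0 = 0$ and the lemma follows.

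The main technical point, and the only delicate one, is justifying the strict inequality $L_x u(t_0,x_0) < 0$: the weak form $L_x u(t_0,x_0) \le 0$ is a direct consequence of the pointwise maximum principle, but the strict form requires combining the unbounded support of $\nu(x_0,\cdot)$ (ensured by \textbf{A2} and the fact that $\nu_0(B(0,R)^c) = |\mu_0|R^{-\alpha}/\alpha > 0$ for every $R>0$) with the vanishing of $u(t_0,\cdot)$ at infinity. Once this strict negativity is secured, the sign combination $0 - (\text{negative}) + (\text{nonnegative}) > 0$ delivers the contradiction with no further effort; the role of the hypothesis $\lambda \ge 0$ is precisely to guarantee the sign of the zero-order term at the positive-maximum point.
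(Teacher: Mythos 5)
Your proof is correct and follows essentially the same route as the paper: locate a positive interior maximum, use $\partial_t u(t_0,x_0)=0$ together with the strict nonlocal maximum principle $L_xu(t_0,x_0)<0$ (which the paper justifies by the unbounded-support remark after \eqref{lxd0}, exactly as you do), and contradict the equation using $\lambda\ge 0$. The only cosmetic difference is that you handle the infimum by applying the argument to $-u$ rather than repeating it symmetrically.
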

\begin{proof}
Let $m=\inf_{(t,x)\in [0,\infty)\times \Rd}u(t,x)$ and $M=\sup_{(t,x)\in [0,\infty)\times \Rd}u(t,x)$.
We have $-\infty<m\le 0\le M< \infty$.
If $M>0$ and $u(t_0,x_0)=M$ for some $t_0>0$ and $x_0\in \rd$, then
$\partial_t u(t_0,x_0) = 0$ and $L_x u(t_0,x_0) < 0$ by the maximum principle from Section~\ref{sec:iar}. Hence
$(\partial_t - L_x+\lambda) u(t_0,x_0)>0$.
This is a contradiction with the assumptions of the lemma, hence $M=0$ or the supremum of $u$ is attained at some boundary point $(0,x_0)$.
Similarly, if $m<0$ and $u(t_0,x_0)=m$ for some $t_0>0$ and $x_0\in \rd$, then
$\partial_t u(t_0,x_0) = 0$, $L_x u(t_0,x_0) > 0$, hence
$(\partial_t - L_x+\lambda) u(t_0,x_0) <0$.
Again, we conclude that $m=0$ or the infimum of $u$ is attained at some point $(0,x_0)$.
\end{proof}

\begin{corollary}\label{cu}
Let $\lambda\ge 0$. There is at most one solution $u\in C_0([0,\infty)\times \rd)$ to the Cauchy problem for $L-\lambda$.
\end{corollary}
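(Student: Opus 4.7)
The plan is to deduce Corollary~\ref{cu} directly from the maximum principle of Lemma~\ref{mm}. Suppose that $u_1,u_2\in C_0([0,\infty)\times\rd)$ are two solutions of the Cauchy problem for $L-\lambda$ corresponding to the same initial datum, i.e. both satisfy $(\partial_t-L_x+\lambda)u_i=0$ on $(0,\infty)\times\Rd$ and $u_i(0,x)=f(x)$ for $x\in\Rd$. Then the difference $w:=u_1-u_2$ belongs to $C_0([0,\infty)\times\Rd)$ (since this is a vector space), satisfies $(\partial_t-L_x+\lambda)w=0$ on $(0,\infty)\times\Rd$ by linearity of $L$ and $\partial_t$, and has vanishing initial values $w(0,x)=0$.

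Next, I would simply invoke Lemma~\ref{mm} for $w$: since $\lambda\ge 0$ and the hypotheses of that lemma hold, we get
\[
\sup_{(t,x)\in[0,\infty)\times\Rd}|w(t,x)|\;=\;\sup_{x\in\Rd}|w(0,x)|\;=\;0.
\]
Hence $w\equiv 0$ on $[0,\infty)\times\Rd$, i.e.\ $u_1=u_2$, which is exactly the claimed uniqueness.

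There is essentially no obstacle: the only mild point to check is that the membership $u_i\in C_0([0,\infty)\times\Rd)$ (in particular, its stability under subtraction) is exactly what allows us to apply Lemma~\ref{mm}, whose proof relies on the supremum and infimum being attained either at $t=0$ or else at some interior point, where the maximum principle for $L$ (from Section~\ref{sec:iar}) combined with $\lambda\ge 0$ forces a contradiction. Since $w$ also enjoys the decay at infinity needed for these extrema to exist and the PDE is satisfied in the classical pointwise sense, the argument closes immediately without any further estimates.
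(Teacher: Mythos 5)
Your argument is correct and is exactly the paper's proof: the difference of two solutions lies in $C_0([0,\infty)\times\rd)$, solves the same equation with zero initial data, and Lemma~\ref{mm} forces it to vanish identically. You have merely spelled out the details that the paper leaves implicit.
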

\begin{proof}
By Lemma~\ref{mm}, the difference of two such solutions is zero
on $[0,\infty)\times \rd$.
\end{proof}

 \begin{lemma}\label{ChK}
$p$ is nonnegative and  satisfies the Chapman-Kolmogorov equation.
 \end{lemma}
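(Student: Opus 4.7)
The plan is to derive both properties from the uniqueness in the Cauchy problem (Corollary~\ref{cu}) and the maximum principle (Lemma~\ref{mm}), with the semigroup identity $P_t P_r = P_{t+r}$ on $C_0(\Rd)$ serving as an intermediate step from which Chapman-Kolmogorov will be extracted. Throughout, fix $\lambda\ge c$, where $c$ is the constant from Theorem~\ref{t-prop}, and set $\tilde{p}_t(x,y)=e^{-\lambda t}p_t(x,y)$ and $\tilde{P}_t f(x):=\int_\Rd \tilde{p}_t(x,y)f(y)\,dy$. By Lemma~\ref{nn}, for every $f\in C_0(\Rd)$ the function $\tilde{u}(t,x)=\tilde{P}_t f(x)$ lies in $C_0([0,\infty)\times\Rd)$ and solves $(\partial_t-L_x+\lambda)\tilde{u}=0$ with $\tilde{u}(0,\cdot)=f$.

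For nonnegativity I would take $f\in C_0(\Rd)$ with $f\ge 0$ and apply Lemma~\ref{mm} to $-\tilde{u}$: since $\sup(-\tilde{u})\le -\inf_x f(x)\le 0$, we get $\tilde{u}\ge 0$, so $\int_\Rd p_t(x,y)f(y)\,dy\ge 0$ for all $t>0$, $x\in\Rd$. Because $y\mapsto p_t(x,y)$ is continuous by Lemma~\ref{p_continuity}, this pointwise inequality for arbitrary nonnegative $f\in C_0(\Rd)$ upgrades to $p_t(x,y)\ge 0$ for all $t>0$ and $x,y\in\Rd$.

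For Chapman-Kolmogorov, I would first establish $\tilde{P}_t \tilde{P}_r f=\tilde{P}_{t+r}f$ for $t,r>0$ and $f\in C_0(\Rd)$. Set $g:=\tilde{P}_r f\in C_0(\Rd)$ and compare $v(t,x):=\tilde{P}_t g(x)$ with $w(t,x):=\tilde{P}_{t+r}f(x)$ on $[0,\infty)\times\Rd$. By Lemma~\ref{nn} applied to $g$, $v\in C_0([0,\infty)\times\Rd)$ and solves $(\partial_t-L_x+\lambda)v=0$ with $v(0,\cdot)=g$. The function $w$ is a time-translate of $\tilde{P}_\cdot f$, hence also in $C_0([0,\infty)\times\Rd)$, and satisfies the same equation with the same initial datum $w(0,\cdot)=g$ (using $\partial_s P_s f = L_x P_s f$, justified by Theorem~\ref{t-exist} together with Theorem~\ref{t-prop} and dominated convergence). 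Corollary~\ref{cu} forces $v=w$. Cancelling the exponentials and applying Fubini (valid thanks to the upper bounds of Theorem~\ref{t-prop} and Proposition~\ref{pr1}), we obtain
$$\int_\Rd\Bigl(\int_\Rd p_t(x,z)p_r(z,y)\,dz\Bigr)f(y)\,dy=\int_\Rd p_{t+r}(x,y)f(y)\,dy$$
for every $f\in C_0(\Rd)$. Both sides, as functions of $y$, are continuous (the left-hand integrand by Lemma~\ref{p_continuity} and dominated convergence with the bounds from Theorem~\ref{t-prop}), so the equality upgrades to the pointwise Chapman-Kolmogorov identity.

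The main obstacle is the clean verification that $v$ and $w$ belong to $C_0([0,\infty)\times\Rd)$ and solve the same Cauchy problem in the precise sense required to invoke Corollary~\ref{cu}; in particular that $v(0,\cdot)=g$ holds as a limit in $C_0$, which ultimately relies on the strong continuity at $t=0^+$ built into Lemma~\ref{nn} and Theorem~\ref{t-exist}. Once this is done, the rest is routine bookkeeping with the estimates already at our disposal.
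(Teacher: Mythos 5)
Your proposal is correct and follows essentially the same route as the paper: nonnegativity via the maximum principle applied to $\tilde{u}=\tilde{P}_tf$ with $f\ge 0$ together with the continuity of $y\mapsto p_t(x,y)$, and Chapman--Kolmogorov via the uniqueness of Corollary~\ref{cu} applied to $\tilde{P}_t(\tilde{P}_sf)$ and $\tilde{P}_{t+s}f$, followed by Fubini and continuity in $y$. The only cosmetic point is that, as the paper itself notes, one uses the \emph{proof} of Lemma~\ref{mm} (the extremum is attained at $t=0$ or equals $0$) rather than its literal statement to extract the sign of $\tilde{u}$.
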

\begin{proof}
 Let, as usual, $\tilde p_t(x,y)=e^{-\lambda t}p_t(x,y)$ and pick $\lambda\ge c$, the constant  in {\rm Theorem~\ref{t-prop}}.
Let $f\in C_0(\Rd)$. By Lemma~\ref{nn},
$u(t,x):=\int \tilde p_t(x,y)f(y)dy$ extends to a function of the class $C_0([0,\infty)\times \Rd)$.
Recall that $p$ is continuous (see Lemma \ref{p_continuity}), hence $\tilde p$ is continuous.
Considering that  all the nonnegative functions $f\in C_0(\Rd)$ are allowed here, by the proof of Lemma~\ref{mm} we get that $\tilde p\ge 0$ and so $p\ge 0$.

We next consider $s>0$ and $u(s,x)$ defined above. For $t\ge 0$, $x\in \rd$, let $ w(t,x)$ be the solution to the Cauchy problem for $L-\lambda$ with the initial condition $w(0,x)=u(s,x)$, $x\in \Rd$. By Lemma~\ref{nn} and Corollary~\ref{cu},
$$\int_\Rd \tilde p_{s+t}(x,y)f(y)dy=
u(s+t,x)= w(t,x)=\int_\Rd \tilde p_t(x,y)\int_\Rd \tilde p_s(y,z)f(z)dzdy.$$
Since $f\in C_0(\Rd)$ is arbitrary, using Fubini's theorem we see that $\tilde p$ satisfies the Chapman-Kolmogorov equation and so does $p$.
\end{proof}
For $f\in C_0(\Rd)$, $t>0$ and $x\in \rd$, we let
$$\tilde P_t f(x)=\int_\Rd \tilde p_t(x,y)f(y)dy.$$
We conclude that $\{\tilde P_t\}$ and $\{P_t\}$ are
strongly continuous semigroups on $C_0(\Rd)$.
\begin{lemma}\label{sps}
If $\lambda\ge c$, the constant  in {\rm Theorem~\ref{t-prop}}, then $\{\tilde P_t\}$ is sub-Markovian.
\end{lemma}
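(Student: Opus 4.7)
The plan is to read sub-Markovianity straight off the three preceding lemmas. Sub-Markovianity of $\{\tilde P_t\}$ on $C_0(\Rd)$ amounts to showing $0\le \tilde P_tf\le 1$ for every $f\in C_0(\Rd)$ with $0\le f\le 1$, since the semigroup property and strong continuity have already been established, and a positive contraction semigroup on $C_0(\Rd)$ is characterized by precisely those pointwise bounds on the order interval $[0,1]$.

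First, I would fix such an $f$ and set $u(t,x)=\tilde P_tf(x)$ for $t>0$, $u(0,x)=f(x)$. Since $\lambda\ge c$ and $c>0$ according to Theorem~\ref{t-prop}, Lemma~\ref{nn} applies and delivers $u\in C_0([0,\infty)\times\Rd)$ together with the Cauchy equation $(\partial_t-L_x+\lambda)u=0$ on $(0,\infty)\times\Rd$. The hypothesis $\lambda\ge 0$ of Lemma~\ref{mm} is then fulfilled (because $\lambda\ge c>0$), whence
\begin{equation*}
\sup_{(t,x)\in[0,\infty)\times\Rd}|u(t,x)|=\sup_{x\in\Rd}|f(x)|\le 1.
\end{equation*}
On the other hand, Lemma~\ref{ChK} gives $p_t\ge 0$, so $\tilde p_t=e^{-\lambda t}p_t\ge 0$ and hence $u\ge 0$. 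Combining the two bounds yields $0\le \tilde P_tf(x)\le 1$ for all $t\ge 0$ and $x\in\Rd$, which is the required sub-Markov property.

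There is no genuine obstacle here; all the hard analytic work was already done in the construction of $p$, the representation of $\tilde P_tf$ as a classical solution of the tempered Cauchy problem (Lemma~\ref{nn}), and the maximum principle (Lemma~\ref{mm}). The only point demanding care is the bookkeeping on constants: one must observe that the $c$ from Theorem~\ref{t-prop} is strictly positive, so that the standing assumption $\lambda\ge c$ automatically entails $\lambda\ge 0$ and legitimizes the application of Lemma~\ref{mm}.
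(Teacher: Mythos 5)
Your argument is correct and follows exactly the same route as the paper: positivity from Lemma~\ref{ChK}, and the contraction bound by feeding Lemma~\ref{nn} into the maximum principle of Lemma~\ref{mm}. The only difference is cosmetic — you spell out the reduction of sub-Markovianity to the pointwise bounds $0\le\tilde P_tf\le 1$ on the order interval $[0,1]$, which the paper leaves implicit.
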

\begin{proof}
By Lemma~\ref{ChK}, $\tilde P_t f\ge 0$ if $f\in C_0(\rd)$ and $f\ge 0$. By Lemma~\ref{nn} and Lemma~\ref{mm}, $\|P_tf\|_\infty\le \| f\|_\infty$, as needed.
\end{proof}
In particular, if $\lambda\ge c$, then for all $t>0$ and $x\in \rd$ we have
$
\int_\rd \tilde p_t(x,y)dy\le 1.
$
To prove the Markovianity in Theorem~\ref{t-uni} we need to verify that
$\int_\rd p_t(x,y)dy\equiv 1$.
The result requires preparation.
Let $\mathcal L$ be the generator of $\{P_t\}$. Then $\mathcal L-\lambda$ is the generator of $\{\tilde P_t\}$, with the same domain, say $D(\mathcal L)$, a dense subset of $C_0(\Rd)$. We will make a connection between $L$ and $\mathcal L$.
Let $\phi \in C_0(\Rd)$, $0<T<\infty$ and $f=\int_0^T P_s \phi ds$.
 By the general semigroup theory, $f\in
D(\mathcal{L})$
and
$\partial_t P_t f= {\mathcal L}P_t f\in C_0(\Rd)$ for every $t>0$. By Lemma~\ref{nn}, $\partial_t P_t f(x)= L P_t f(x)$
for all $t>0$ and $x\in \Rd$, hence ${\mathcal L}P_t f=L P_t f$ for all such $t$ and $f$. Therefore $L={\mathcal L}$ on a dense subset of $C_0(\Rd)$.

The following more explicit result is rather delicate.
\begin{theorem}
$  \mathcal{L} f=L f$ for $f\in C_0^2(\rd)$.
\end{theorem}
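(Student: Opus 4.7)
The plan is to establish the Duhamel identity
\begin{equation*}
P_tf(x) = f(x) + \int_0^t P_s Lf(x)\,ds, \qquad t>0,\ x\in\Rd,
\end{equation*}
for every $f\in C_0^2(\Rd)$. Once this is in hand, dividing by $t$ and letting $t\to 0^+$ yields $\|(P_tf-f)/t - Lf\|_\infty\to 0$ by strong continuity of $\{P_s\}$ on $C_0(\Rd)$ applied to $Lf\in C_0(\Rd)$, and hence $f\in D(\mathcal{L})$ with $\mathcal{L} f = Lf$, as desired.

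Two preparatory facts are needed. First, $Lf\in C_0(\Rd)$ whenever $f\in C_0^2(\Rd)$: boundedness follows from \eqref{eq:LC2}, \textbf{A2} and \eqref{eq:V0Lm}, while continuity and vanishing at infinity follow by dominated convergence using an integrable majorant of the form $c(1\wedge|u|^2)\nu_0(du)$, with $c$ depending on the sup-norms of $f$ and its derivatives. Second, setting $g:=Lf\in C_0(\Rd)$ and $I_t(x):=\int_0^tP_sg(x)\,ds$, one has $I_t\in D(L)$ with $LI_t=P_tg-g$; this follows by writing $L^\delta I_t(x)=\int_0^tL^\delta P_sg(x)\,ds$ via Fubini, using $L^\delta P_sg(x)\to L P_sg(x)=\partial_s P_sg(x)$ as $\delta\to 0$ together with the bounds of Lemma~\ref{lem-ltil} and Lemma~\ref{nn} to interchange limits, and recognizing $\int_0^t\partial_sP_sg(x)\,ds=P_tg(x)-g(x)$.

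To produce the Duhamel identity I would invoke uniqueness for the Cauchy problem (Corollary~\ref{cu}). Set $u(t,x):=P_tf(x)$ and $v(t,x):=f(x)+I_t(x)$. For $u$, Lemma~\ref{nn} (equivalently \eqref{eq:pofs}) gives $(\partial_t-L_x)u=0$ with $u(0,\cdot)=f$. For $v$, differentiating under the integral together with $LI_t=P_tg-g$ yields $\partial_tv=P_tLf$ and $L_xv=Lf+(P_tLf-Lf)=P_tLf$, so the same Cauchy problem is solved with the same initial datum. Tempering by $e^{-\lambda t}$ with $\lambda\ge c$ (the constant of Theorem~\ref{t-prop}), both $\tilde u$ and $\tilde v$ lie in $C_0([0,\infty)\times\Rd)$: for $\tilde u$ this is Lemma~\ref{nn}, and for $\tilde v$ it follows by combining the sub-Markovian bound $\|P_sh\|_\infty\le e^{\lambda s}\|h\|_\infty$ with Lemma~\ref{nn} applied separately to $f$ and to $Lf$, the exponential factor absorbing the linear-in-$t$ growth of $I_t$. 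Corollary~\ref{cu} then forces $\tilde u\equiv\tilde v$, hence $u\equiv v$, which is the desired identity. The main obstacle I expect is this last verification that $\tilde v\in C_0([0,\infty)\times\Rd)$, specifically that $v(t,x)\to 0$ as $|x|\to\infty$ uniformly for $t$ in bounded intervals, which requires a careful use of the decay estimates from Theorem~\ref{t-prop} applied to both $f$ and $Lf$.
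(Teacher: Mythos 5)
Your overall strategy (prove the Duhamel identity $P_t f = f + \int_0^t P_s Lf\,ds$, then divide by $t$ and let $t\to 0^+$) is the same one the paper uses, and the reduction to Cauchy-problem uniqueness via Corollary~\ref{cu} is a reasonable packaging of the paper's maximum-principle argument. However, there is a genuine gap in the central step where you assert that $LI_t = P_tg - g$ for $g = Lf$ by ``interchanging limits'' in $\int_0^t L^\delta P_s g(x)\,ds$.

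The problem is that the available bound on the integrand does not permit this interchange naively. From Corollary~\ref{rem-6-1} (estimate \eqref{L3-est}) the best uniform-in-$\delta$ bound is $|L^\delta P_s g(x)| \le \|g\|_\infty \int_{\Rd} |L_x^\delta p_s(x,y)|\,dy \le C\|g\|_\infty\, s^{-1}$, and $s^{-1}$ is not integrable near $s=0$. You therefore cannot apply dominated convergence to pass $\lim_{\delta\to 0}$ under $\int_0^t\,ds$, nor can you invoke the fundamental theorem of calculus for $\int_0^t \partial_s P_s g\,ds$, since the integrand is not known to be absolutely integrable. The missing idea is the decomposition
\begin{equation*}
\int_{\Rd} L_x^\delta p_s(x,y)\,g(y)\,dy
= \int_{\Rd} L_x^\delta p_s(x,y)\big[g(y)-g(x)\big]\,dy
\;+\; g(x)\int_{\Rd} L_x^\delta p_s(x,y)\,dy,
\end{equation*}
which is what the paper uses to prove \eqref{change1}. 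The first piece becomes integrable in $s$ only because $g=Lf$ is H\"older continuous, so $|g(y)-g(x)|\le c(1\wedge|x-y|^\epsilon)$ converts the $s^{-1}$ singularity into $s^{-1+\epsilon/\alpha}$; the second piece is controlled by the cancellation bound \eqref{Ldel1}, $\big|\int_{\Rd} L_x^\delta p_s(x,y)\,dy\big|\le C s^{-1+(\eta\wedge\theta)/\alpha}$, which is again integrable. Without noticing and using these two cancellation mechanisms, the interchange you rely on cannot be justified, and the claim $LI_t = P_t g - g$ (and hence the Duhamel identity) is not established.

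A secondary, smaller issue: you correctly flag that verifying $\tilde v\in C_0([0,\infty)\times\Rd)$ needs care, but the paper actually sidesteps this by defining $u$ directly as the \emph{tempered difference} $e^{-\lambda t}\big[P_tf - f - \int_0^t P_s Lf\,ds\big]$, which has zero initial data and satisfies $(\partial_t - L)u = -\lambda u$; one then argues $u\equiv 0$ by the maximum principle alone, avoiding the need to check decay of $v$ separately. That is a cleaner route, but either way the H\"older-based cancellation argument above is indispensable and is what your proposal is missing.
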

 \begin{proof}
We first prove that for
H\"older continuous function $g\in C_0(\rd)$ we have
\begin{equation}\label{change1}
L \int_0^t P_s g(x)ds= \int_0^t L P_s g(x)ds, \quad x\in \Rd.
\end{equation}
Indeed, for $\delta>0$ the operator $L^\delta$ is bounded and linear  on $C_0(\rd)$, hence
\begin{align*}
L \int_0^t   P_s g(x)ds &= \lim_{\delta\to 0} L^\delta \int_0^t P_s g(x)ds=  \lim_{\delta\to 0}  \int_0^t  L^\delta P_s g(x)ds\\
&= \lim_{\delta\to 0}  \int_0^t  \int_\rd L^\delta_x p_s (x,y) g(y) dy ds= I+ I\!I,
\end{align*}
where
$$I=\lim_{\delta\to 0}  \int_0^t  \int_\rd L^\delta_x p_s (x,y) [g(y)- g(x)]  dy ds, \quad I\!I= g(x)  \lim_{\delta\to 0}  \int_0^t  \int_\rd L^\delta_x p_s (x,y) dy ds$$
are finite, as we will shortly see.
For $I\!I$, by Corollary~\ref{rem-6-1} and Lemma~\ref{lem-ltil} we have
$$
\lim_{\delta\to 0}   \int_\rd L^\delta_x p_s (x,y) dy=   \int_\rd \lim_{\delta\to 0}L^\delta_x p_s (x,y) dy=
\int_\rd L_x p_s (x,y) dy.
$$
Therefore by \eqref{Ldel1} and the dominated convergence theorem,
\begin{align*}
I\!I
&=g(x)
\int_0^t  \lim_{\delta\to 0}  \int_\rd L^\delta_x p_s (x,y) dy ds
=g(x)\int_0^t\Big(\int_\rd L_x p_s (x,y) dy\Big) ds.
\end{align*}
It is important to notice that the last (outer) integral $\int_0^t (\ldots)ds$ converges absolutely.
We now turn to $I$.
Let $\epsilon>0$ be such that $\alpha+\gamma-\epsilon>d$. Assume that $g$ is H\"older continuous of order $\epsilon$. Therefore, for all $x,y\in \rd$ and $s\in (0,t)$, by \eqref{L3-est}   we get   
\begin{align*}
  \big| L^\delta_x &p_s (x,y) [g(y)- g(x)]  \big| \le
  c |L^\delta_x p_s (x,y) |  (1\wedge |x-y|^\epsilon)  \\
  &  \leq c s^{-1} \big(G_s^{\alpha+\gamma}(y-x) + s^{\theta/\alpha}  K_s(x,y)\big)
	  (1\wedge |x-y|^\epsilon)  
  \\
  & \leq cs^{-1+ \epsilon/\alpha}   G_s^{(\alpha+\gamma-\epsilon)}(y-x)+ s^{-1+\theta/\alpha}  K_s(x,y).
\end{align*}
The above expression is integrable in $dyds$.    Of course, $ \lim_{\delta \to 0 }L^\delta_x p_s (x,y)=L_x p_s (x,y)$. By the dominated convergence theorem,
$$
I=\int_{(0,t)\times\rd} L_x p_s (x,y) [g(y)- g(x)]  dyds,
$$
which is finite. Adding $I$ and $I\!I$ we obtain
\begin{equation*}\label{change12}
L \int_0^t P_s g(x)ds= \int_0^t\Big(\int_\rd  L_x p_s(x,y) g(y)dy\Big)ds.
\end{equation*}
By \eqref{L3-est}  and the boundedness of $L^\delta$ we have that
\begin{align*}
\int_\rd  L_x p_s(x,y) g(y)dy&=\int_\rd  \lim_{\delta\to 0} L^\delta _x p_s(x,y) g(y)dy
=\lim_{\delta\to 0}\int_\rd   L^\delta _x p_s(x,y) g(y)dy\\&=\lim_{\delta\to 0}L^\delta\int_\rd    p_s(x,y) g(y)dy
=\lim_{\delta\to 0}L^\delta P_s g(x).
\end{align*}
Therefore, $\int_\rd  L_x p_s(x,y) g(y)dy=L P_s g(x)$, which gives \eqref{change1}.

We next claim that for $f\in C_0^2(\rd)$, $0<t<\infty$ and $x\in \Rd$,
\begin{equation}\label{e:fTc}
P_t f(x)- f(x) = \int_0^t P_s L f(x)ds.
\end{equation}
To prove \eqref{e:fTc} we let
$\lambda> c>0$, with $c$ from {\rm Theorem~\ref{t-prop}}, and we define
$$
u(t,x)=e^{-\lambda t } \Big[  P_{t} f(x)-  f(x)-\int_0^t P_{s} L f(x)ds\Big].
$$
We also let $u(0,x)=0$. By Lemma~\ref{nn}, $u\in C_0([0,\infty)\times \rd)$.
By \eqref{change1} with $g=Lf$,
\begin{align*}
(\partial_t - L )u(t,x)&=  -\lambda u(t,x)+ e^{-\lambda t} \Big[L f(x)-  P_{t}  L f(x) + \int_0^t L P_{s} L f(x)ds\Big].
\end{align*}
From the discussion of \eqref{change1} the last integral is absolutely convergent,
implying that  $\partial_s P_{s} L f(x)=L P_{s} L f(x)$ is also absolutely integrable, cf. Lemma~\ref{nn}. Therefore,
\begin{equation*}\label{lamu}
(\partial_t - L )u(t,x)= - \lambda u(t,x)+e^{-\lambda t }\left[-\!\!\int_0^t \!\!\partial_s P_{s} L f(x)ds + \int_0^t \!\!L P_{s} L f(x)ds\right]= - \lambda u(t,x).
\end{equation*}
We now prove that $u\equiv 0$. Recall that $u\in C_0([0,\infty)\times \rd)$. If $u$ attains a strictly positive maximum at some point $(t_0,x_0)\in (0,\infty)\times \rd$, then   $(\partial_t - L )u(t_0,x_0)=-\lambda u(t_0,x_0)<0$, but  the maximum principle for $L$ contradicts this: $(\partial_t - L)u(t_0,x_0)= - L u (t_0, x_0)> 0$. Therefore we must have $u\le 0$. Analogously we prove that $u\le 0$ and so $u=0$ everywhere.

Finally, we divide both sides of \eqref{e:fTc} by $t$ and let $t\to 0$. We obtain $\mathcal L f(x)= L f(x)$. The proof is complete: the operator $L$ and the generator $\mathcal L$ coincide on $C_0^2(\rd)$.
\end{proof}

\subsection{Proof of Theorem~\ref{t-uni}}
We only need to prove
that for all $t>0$ and $x\in \Rd$ we have
$\int_\Rd p_t(x,y)dy=1$.
  We know that the operators $P_t f(x) = \int_{\Rd} p_t(x,y) f(y)\, dy$, $t>0$,
  form a strongly
	continuous semigroup on $C_0(\Rd)$ with the generator $\mathcal L$.
	We fix $t>0$. If $f$ is in the domain of $\mathcal L$, then
	from  the general theory of semigroups (see \cite[Ch. 2, Lemma 1.3]{MR1721989} or \cite[Lemma 4.1.14]{MR1873235}),
	$$
	  P_t f(x) - f(x) = \int_0^t P_s \mathcal L f(x)\, ds.
	$$
	In particular, let $f\in C^2_0(\Rd)$ be such that $|f(x)|\leq 1$ for all $x\in\Rd$ and
	$f(x)=1$ for $|x|<1$. Let $f_n(x)=f(x/n)$, $n\ge 1$.
We have $\lim_{n\to\infty} f_n(x)=1$ and $\lim_{n\to\infty} P_t f_n(x) = \int_{\Rd} p_t(x,y)\, dy$, which easily follows from bounded convergence.  Furthermore,
	\begin{equation}\label{dyn}
	  P_t f_n(x) - f_n(x) = \int_0^t P_s L f_n(x)\, ds.
	\end{equation}
	If $x\in \rd$ is fixed and $n>2|x|$, then
	\begin{eqnarray*}
	  |Lf_n(x)|
		&  =   & \frac{1}{2}\left| \int_{\Rd} \left(f_n(x+u)+f_n(x-u)-2f_n(x) \right)\, \nu(x,du)\right| \\
		&  =   & \frac{1}{2}\left| \int_{|u|>n/2} \left(f((x+u)/n)+f((x-u)/n) -2 \right)\, \nu(x,du) \right| \\
		& \leq & \nu(x,B(0,n/2)^c) \leq M_0 \nu_0(B(0,n/2)^c) \leq c_1 n^{-\alpha}.
	\end{eqnarray*}
	This yields
	$$
	  \left| \int_0^t P_s L f_n(x)\, ds \right| \leq \int_0^t P_s |L f_n(x)|\, ds
		\leq c_2 t n^{-\alpha} \to 0,
	$$
	as $n\to\infty$.
	By \eqref{dyn} and the above  discussion we get $\int_{\Rd} p_t(x,y)\, dy = 1$.
The proof  of Theorem~\ref{t-uni} is complete. \qed{}

We end the paper by pointing out in which sense $p_t(x,y)$ is unique. Plainly, if 
$\mathfrak{p}_t(x,y)$
has the properties listed in Theorem~\ref{t-exist} and \ref{t-prop}, then $\mathfrak{p}_t(x,y)=p_t(x,y)$ for all $t>0$, $x,y\in \Rd$.
Indeed, let $s>0$ and $z\in \rd$. By the proof of Lemma~\ref{nn},
 $u(t,x):=e^{-\lambda t}\int_\Rd \mathfrak{p}_t(x,y)p_s(y,z)dy$ and $\mathfrak u(t,x):=e^{-\lambda t}\mathfrak{p}_{t+s}(x,z)$ give solutions to the same Cauchy problem for $L-\lambda$, and they are in $C_0([0,\infty)\times \rd)$ for large $\lambda>0$.
 By Corollary~\ref{cu},
$$
\int_\Rd \mathfrak{p}_t(x,y)p_s(y,z)dy=\mathfrak{p}_{t+s}(x,z), \quad s,t>0,\ x,y\in \Rd.
$$
We claim that for all $f\in C_0(\Rd)$, uniformly in $x\in \rd $ we have
\begin{equation}\label{f-solx}
   \lim_{t\to 0} \int_\Rd f(x) p_t(x,y)\, dx =f(y).
\end{equation}
For clarity, this is different than \eqref{f-sol}. To prove \eqref{f-solx} we note that
$$
\int_\rd p^0_t(x,y)dx=\int_\rd p^y(y-x)dx=1, \quad t>0,\ y\in \rd,
$$
we recall \eqref{r}, \eqref{e:small}, \eqref{H30}, Lemma~\ref{th-ptx2} with $\beta=0$, the scaling of $G_t^{(\alpha+\gamma)}$ and the dominated convergence.
By \eqref{f-solx} we get $p_s(x,z)=\mathfrak{p}_s(x,z)$ for all $s>0$, $x,z\in \rd$. 


\end{document}